\theoremstyle{plain}
\newtheorem{thm}{Theorem}
\newtheorem{lem}[thm]{Lemma}
\newtheorem{con}[thm]{Conjecture}
\newtheorem{cor}[thm]{Corollary}
\newtheorem{prop}[thm]{Proposition}
\newtheorem{remark}[thm]{Remark}
\newtheorem{defn}[thm]{Definition}
\newtheorem{propdef}[thm]{Proposition-Definition}
\newtheorem{ex}[thm]{Example}
\newcommand{\rank}{{\rm rank\,}}
\newcommand{\corank}{{\rm corank\,}}
\newcommand{\Sing}{{\rm Sing}}
\newcommand{\sQ}{{\mathcal Q}}
\newcommand{\A}{{\mathbb A}}
\newcommand{\FF}{{\mathbb F}}
\newcommand{\PP}{{\mathbb P}}
\newcommand{\QQ}{{\mathbb Q}}
\newcommand{\ZZ}{{\mathbb Z}}
\newcommand{\Lef}{\mathbb{L}}
\newcommand{\q}{/\!\!/}
\title[Properties of $c_2$ invariants of Feynman graphs]{Properties of $c_2$ invariants of Feynman graphs}
\author{Francis Brown, Oliver Schnetz, Karen Yeats}
\thanks{Karen Yeats is supported by an NSERC discovery grant and would like to thank Samson Black for explaining knots. Francis Brown is partially supported by ERC grant 257638.
Francis Brown and Oliver Schnetz thank Humboldt University, Berlin, for support as visiting guest scientists. All three authors thank Humboldt University for hospitality.}
\begin{document}
\begin{abstract} 
The  $c_2$ invariant of a Feynman graph is an arithmetic invariant   which detects many properties of the corresponding Feynman integral.
In this paper, we define the $c_2$ invariant in momentum space and prove that it equals the $c_2$ invariant in parametric space for overall log-divergent graphs.
Then we show that the $c_2$ invariant of a graph vanishes whenever it contains subdivergences.
Finally, we investigate how the $c_2$  invariant  relates to   identities such as the four-term relation in knot theory.
\end{abstract}
\maketitle

\section{Introduction}
Let $G$ be a connected graph. The graph polynomial of $G$ is defined by associating a variable $x_e$ to every edge $e$ of $G$ and setting
\begin{equation}\label{0}
\Psi_G(x)=\sum_{T\,\rm span.\,tree}\;\prod_{e\not \in T}x_e,
\end{equation}
where the sum is over all spanning trees $T$ of $G$. These polynomials first appeared in Kirchhoff's work on currents in electrical networks \cite{KIR}.

Let $N_G$ denote the number of edges of $G$, and let   $h_G$ denote the number of  independent cycles in $G$ (the first Betti number). 
Of particular interest is the case when $G$ is primitive and overall  logarithmically divergent:
\begin{eqnarray} \label{defnprimdiv}  N_G  & = & 2h_G  \\
N_{\gamma} &> &2 h_{\gamma} \quad \hbox{ for all strict non-trivial subgraphs } \gamma \subsetneq G \ .\nonumber
\end{eqnarray}
For such graphs, the corresponding Feynman integral (or residue) is independent of the choice of   renormalization scheme and  
can be defined by the following   convergent integral  in parametric space  (\cite{BEK}, \cite{Wein})
\begin{equation}\label{2}
I_G=\int_0^\infty\cdots\int_0^\infty\frac{ dx_1\cdots dx_{N_G}}{\Psi_G(x)^2}\,  \delta(\sum_{i=1}^{N_G} x_i-1).
\end{equation}
The numbers $I_G$  are notoriously difficult to calculate, and have been investigated intensively from the numerical \cite{BK, CENSUS}
and  algebro-geometric points of view \cite{BEK, BrFeyn}.   For   graphs in $\phi^4$ theory with subdivergences, the renormalised amplitudes can also be written in terms of graph polynomials
 by  subtracting  counter-terms from the  same leading term $\Psi^{-2}_G(x)$ \cite{BrK}.

Given the difficulty in computing $I_G$,  one seeks  more efficient  ways to extract qualitative information about the Feynman integral indirectly.
The motivic philosophy suggests studying the graph hypersurface: 
$$\overline{X}_G  \subset \PP^{N_G-1} $$
defined by the zero locus  of the graph polynomial $\Psi_G$ in projective space (with no restriction on the numbers of edges or cycles in $G$). 
In particular, motivated by a conjecture of Kontsevich \cite{KONT} (disproved for general graphs in \cite{BB}),  one can consider the point counting function 
$$q\mapsto |\overline{X}_G(\FF_q)|$$
where $q=p^n$ is a prime power, and $\FF_q$ is the finite field with $q$ elements. In \cite{BS},
it was  shown that for graphs with at least three vertices there is a map
$$c_2 : \{\hbox{graphs with } \geq 3 \hbox{ vertices}\} \rightarrow \prod_{{\rm prime\,powers}\,q}\ZZ/q\ZZ$$
such that, writing
$[X_G]_q:=|X_G(\FF_q)|,$ we have
\begin{equation}\label{c2modq2}
[X_G]_q \equiv c_2(G)_q \,  q^2 \mod q^3
\end{equation}
where $X_G\subset \A^{N_G}$ is the affine graph hypersurface given by the zero locus of $\Psi_G$, and $c_2(G)_q$ is itself the point counting function
on a related hypersurface.  One of the motivations for studying the $c_2$ invariant is the following conjecture, verified for all graphs with $\leq14$ edges,
which states that it only depends on  the residue of $G$ whenever it is defined.
\begin{con}\label{con1}
If $I_{G_1}=I_{G_2}$ for two primitive log-divergent graphs $G_1$, $G_2$ (i.e. which satisfy $(\ref{defnprimdiv})$) then $c_2(G_1) = c_2(G_2)$.
\end{con}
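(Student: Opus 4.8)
The plan is to proceed via the motivic philosophy underlying the whole set-up: both the residue $I_{G}$ and the point-counting data encoded by $c_2(G)$ should be distinct realizations of a single ``graph motive'' $M_G$ attached to the complement $\PP^{N_G-1}\setminus\overline{X}_G$ relative to the coordinate simplex. The period $I_{G}$ is the image of $M_G$ under the de Rham--Betti period pairing, while $[X_G]_q \bmod q^3$, and hence $c_2(G)_q$, is governed by the $\ell$-adic \'etale realization via the Grothendieck--Lefschetz trace formula and the comparison theorems. From this vantage the conjecture is an instance of Grothendieck's period conjecture restricted to graph motives: the equality $I_{G_1}=I_{G_2}$ should force $M_{G_1}$ and $M_{G_2}$ to coincide, whence all their realizations agree, in particular the point counts and a fortiori $c_2$. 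Since the period conjecture is itself wide open, this supplies the conceptual reason but not a proof, and I would not attempt the statement in this generality.

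Instead, the realistic plan is to reduce the problem to the list of elementary operations on graphs that are \emph{known} to preserve $I_G$, and to establish $c_2$-invariance under each of them separately. First I would record the moves under which $I_G$ is constant: completion and decompletion (so that $I_G$ depends only on the completed graph $\widetilde{G}$), the twist, planar (Fourier) duality, and the gluing/product operations across $1$- and $2$-valent cuts. The target of the first half of the argument is then the statement that if $G_1$ and $G_2$ are related by a finite sequence of these moves, then $c_2(G_1)=c_2(G_2)$.

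For the individual moves I would argue arithmetically rather than analytically, tracking the effect on $[X_G]_q$ modulo $q^3$. Completion/decompletion and the vertex-cut product formulas can be handled directly through the behaviour of $\Psi_G$ under adjoining a vertex at infinity and under $1$- and $2$-valent splittings. The result proved in this paper that the momentum-space $c_2$ agrees with the parametric $c_2$ for overall log-divergent graphs is exactly the tool that makes planar/Fourier duality tractable, since duality is manifest in momentum space. The twist is the most delicate of the elementary moves, and I expect it to require a careful linear-algebra computation on the Dodgson polynomials, in the spirit of the denominator reductions used to compute $c_2$.

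The hard part will not be invariance under any single move but \emph{completeness}: there is no known proof that every coincidence $I_{G_1}=I_{G_2}$ arises from the documented operations, and sporadic coincidences of the kind appearing in the period tables would fall outside this scheme altogether. Closing that gap is precisely what places the full conjecture out of reach and is why the strongest unconditional evidence remains the verification for graphs with at most $14$ edges. Accordingly, I would present the realistic deliverable as unconditional $c_2$-invariance under the known period symmetries, with the motivic heuristic above as the reason to expect the conjecture in full.
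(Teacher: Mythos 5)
This statement is Conjecture~\ref{con1} of the paper and is not proved there: the authors offer only the motivic heuristic (avatar~4, the expected relation of $c_2$ to the framing of the graph motive) and numerical verification for graphs with at most $14$ edges as evidence. Your assessment --- that the full statement is out of reach, that the motivic/period-conjecture philosophy is the conceptual justification, and that the realistic unconditional content is $c_2$-invariance under the known period-preserving operations --- matches the paper's own treatment, which likewise stops at the heuristic and the finite verification.
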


Furthermore,  for graphs $G$ which evaluate to multiple zeta values,  we expect  the residue $I_G$  to drop in transcendental weight   if and only if   $c_2(G)_q$ is identically zero \cite{BY}.  
 All $c_2$ invariants of primitive log-divergent graphs with $\leq20$ edges are listed for the first six primes in \cite{modQFT}.

\subsection{Avatars of the $c_2$ invariant}
Before stating our main results, it will be helpful to  discuss  various different incarnations of the $c_2$ invariant.

\vspace{0.05in}
\emph{1. Geometric}.   If $k$ is a field, we can consider the   class $[X_G]$ of the affine graph hypersurface in the Grothendieck ring of varieties $K_0(\mathrm{Var}_k)$ over $k$.
Let $\Lef=[\A^1_k] \in K_0(\mathrm{Var}_k)$ be the equivalence class of the affine line. Whenever $G$ has at least three vertices, in other words whenever $N_G\geq h_G+2$,  it was shown in \cite{BS} that there exists an element
\begin{equation}\label{c2geom} c_2(G) \in K_0(\mathrm{Var}_k)/\Lef\ ,
\end{equation}
given explicitly by the class of a certain hypersurface, such that
\begin{equation}\label{1}
[X_G] \equiv c_2(G) \Lef^2 \mod \Lef^3\ .
\end{equation}
This  is a refined version of  equation $(\ref{c2modq2})$.

\vspace{0.05in}

\emph{2. Arithmetic}. For some applications,  and for numerical computations, it is often simpler to restrict the point counting function to the fields $\FF_p$ of prime order only.
Thus in \cite{BS} we considered  the  vector
\begin{equation}\label{c2total}
\widetilde{c}_2(G) = ( c_2(G)_2,  c_2(G)_3,c_2(G)_5,\ldots ) \in \prod_{p\,   prime} \ZZ/p\ZZ\ . 
\end{equation}
It clearly  factors through the class  $(\ref{c2geom})$, but in many cases we are not able  to lift computations of $\widetilde{c}_2(G)$ to the
Grothendieck ring. In \cite{BS} we gave examples of graphs  such that $\widetilde{c}_2(G)$ is given by the Fourier coefficients of a modular form, giving 
explicit counter examples to  Kontsevich's conjecture. Several more modular $\widetilde{c}_2$ invariants were found in \cite{modQFT}.

\vspace{0.05in}

\emph{3. Analytic}.  It turns out that for a large class of graphs, one can in principle compute  the residue $I_G$   by integrating  in parametric space \cite{BrFeyn}.
After integrating out a subset of  edge variables $x_1,\ldots, x_n$  in $(\ref{2})$, one typically obtains an expression  whose  numerator  is an iterated integral
in the sense of K. T. Chen \cite{Ch1}, and whose denominator  is  a polynomial
$$D^n_G(x_1,\ldots, x_n) \in \ZZ[x_{n+1},\ldots, x_{N_G}]$$
of degree at most two in each variable.  When $D^n_G$ factorizes, one can define $D^{n+1}_G$  to be the resultant of its factors  with respect to $x_{n+1}$.
This sequence of polynomials (which terminates when $D^n_G$ can no longer be reduced) is called the denominator reduction.  When $D^n_G$ exists, we showed in \cite{BS} that  
\begin{equation}\label{introDn2c2}
c_2(G)_q \equiv (-1)^n [D^n_G]_q \mod q
\end{equation}
when $2h\leq N_G$ and $5\leq n<N_G$, as  a  consequence of the Chevalley-Warning theorem. This gives an effective way to compute $c_2(G)_q$ 
and is the main method for proving properties of  the $c_2$ invariant.

\vspace{0.05in}

\emph{4. Motivic}. We expect the $c_2$ invariant to relate to the framing of the graph motive \cite{BEK} given by the Feynman differential form (the integrand of $(\ref{2})$).  This partly justifies conjecture $\ref{con1}$ and the incarnations 1,2,3 above.

\vspace{0.05in}
Hereafter, we shall loosely refer to the $c_2$ invariant as any of the variants $1-3$  above, since  our results relate to all three different versions. As a result, there is considerable interplay  between  geometric, combinatorial, and arithmetic arguments throughout this paper.

\subsection{Results} 
It will be convenient to make the following definition.
\begin{defn} \label{defncgeneral} Let $X$ be a  scheme of finite type over $\mathrm{Spec} \,  \ZZ$.  Let $n\geq 0$. We say that $X$ has a $c_n$ invariant if
$[X]_q \equiv 0 \mod q^n $ for all  prime powers $q$. In this case, define the $c_n$ invariant of $X$ to be the function:
$$c_n(X) \equiv [X]_q/q^n \mod q$$
from the set of prime powers $q$ to $\ZZ/q \ZZ$.
\end{defn} 
Our  results are of three different  types.

\subsubsection{A $c_2$ invariant in momentum space}
Our first goal is to show that the $c_2$ invariant is intrinsic and is not simply a feature of the choice of integral representation for the Feynman graph.
For this, we define a momentum space representation for the $c_2$ invariant as follows.

The Feynman integral in momentum space is the integral of an algebraic differential form with singularities along a union  of quadrics
$Q_1,\ldots, Q_{N_G}$. With an appropriate choice of space-time metric, we  show that  the scheme $ V(Q_1\ldots Q_{N_G})$, which is defined over $\ZZ$, 
has a $c_2$ invariant  in the sense of definition $\ref{defncgeneral}$, and  we define $c_2^{\rm mom}(G)$ to be $c_2(V(Q_1\ldots Q_{N_G}))$.
In other words, we have the equation:
$$c_2(G)^{\rm mom}_q \equiv[Q_1Q_2\cdots Q_{N_G}]_q/q^2\mod q.$$
The first  result is that the $c_2$ invariant in momentum space is the same as the $c_2$ invariant in parametric space for logarithmically divergent graphs.
\begin{thm}Let $G$ be a graph with $h_G\geq3$. If $2h_G=N_G$ then 
$$c_2(G)^{\rm mom}_q \equiv c_2(G)_q\mod q.$$
\end{thm}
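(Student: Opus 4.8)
The plan is to transport to finite fields the classical computation that integrating out the loop momenta in the momentum-space integral reproduces the parametric integrand $\Psi_G^{-D/2}$ of $(\ref{2})$, and to extract both point counts from it. Since $N_G=2h_G$, the integral is logarithmically divergent precisely in dimension $D=4$, where $Dh_G=2N_G$ and $\Psi_G^{-D/2}=\Psi_G^{-2}$; so I take $D=4$ and choose the metric $g$ on $\FF_q^4$ promised in the statement to be split, so that $\det g$ is a square. Let $\Lambda$ be the $N_G\times h_G$ cycle matrix of $G$, so that each edge momentum is $q_e=(\Lambda k)_e$ in the loop momenta $k=(k_1,\dots,k_{h_G})\in(\FF_q^4)^{h_G}$ and $Q_e=\lr{q_e,q_e}_g$. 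The single algebraic input is the matrix-tree identity $\det(\Lambda^{\mathsf T}X\Lambda)=\Psi_G(x)$ for $X=\mathrm{diag}(x_1,\dots,x_{N_G})$: this is exactly why $\Psi_G$ controls both sides.

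First I would fix a nontrivial additive character $\psi$ of $\FF_q$ and introduce Schwinger parameters $x_e\in\FF_q$ through the identity $q\cdot[\,Q_e(k)=0\,]=\sum_{x_e\in\FF_q}\psi(x_eQ_e(k))$. Expanding the product over edges and applying inclusion--exclusion over subsets $S\subseteq E(G)$ then rewrites $[Q_1\cdots Q_{N_G}]_q$ as an alternating sum, over nonempty $S$ and over $x_S\in\FF_q^S$, of the Gaussian character sums $\sum_k\psi\bigl(\sum_{e\in S}x_eQ_e(k)\bigr)$. This is the finite-field avatar of integrating out $k$; note that when $S=E(G)$ one recovers the full Symanzik matrix, which is what ties the construction to $\Psi_G$.

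Next I would evaluate each Gaussian sum. The form $\sum_{e\in S}x_eQ_e$ has Gram matrix $M_S(x)\otimes g$, where $M_S(x)=\Lambda^{\mathsf T}X_S\Lambda$ with $X_S=\mathrm{diag}(x_e:e\in S)$ and $\det M_{E(G)}(x)=\Psi_G(x)$. The standard quadratic Gauss-sum formula gives $\sum_k\psi(\cdots)=\pm\,q^{\,2N_G-2\rank M_S(x)}$, the sign being a quadratic-character value of $\det g$ and of the top nonvanishing minor of $M_S(x)$. Because $\det g$ is a square and $D=4$ is even (so the quadratic character enters only through fourth powers, hence trivially), these signs are all $+1$, and the exponent is governed purely by $\rank M_S(x)$. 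Thus the momentum count is reorganised by the corank stratification of the Symanzik matrix, i.e.\ of the graph hypersurface $\{\Psi_G=0\}$.

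It then remains to match the coefficient of $q^2$. Assembling the sums, one should obtain $[Q_1\cdots Q_{N_G}]_q\equiv0\bmod q^2$, so that $c_2^{\mathrm{mom}}(G)$ is defined in the sense of Definition \ref{defncgeneral}, together with an identification of the $q^2$-coefficient with the corank-$1$ stratum of $\{\Psi_G=0\}$; the parametric formulas $(\ref{1})$ and $(\ref{c2modq2})$ then identify this with $c_2(G)_q$. I expect the main obstacle to be precisely this final bookkeeping: controlling the higher-corank strata ($\corank M_S\geq 2$) and the subsets with $|S|\neq h_G$, and showing that together with the residual quadratic-character data they contribute nothing to the coefficient of $q^2$ modulo $q$ (and, simultaneously, that all lower-order terms cancel, giving the divisibility by $q^2$). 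A genuinely separate case is $q$ even, where the quadratic-form and Gauss-sum input must be replaced by its characteristic-$2$ (Arf-invariant) analogue.
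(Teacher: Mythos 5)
Your character-sum computation is essentially the finite-field shadow of the paper's own argument: the paper performs the same ``Schwinger trick'' by computing the class of the universal quadric $\sQ_I=\sum_{i\in I}x_iQ_i$ in two ways (Propositions \ref{prop1} and \ref{prop2}), and because the twistor metric (\ref{8}) makes each $Q_i$ bilinear in $(p^+,p'^+)$ against $(p^-,p'^-)$, the relevant fibrations are by \emph{linear} systems rather than quadrics --- so no Gauss sums, no quadratic characters, and no separate characteristic-$2$ (Arf) case are needed. Your reorganisation by the corank stratification of $M_S(x)=\Lambda^{\mathsf T}X_S\Lambda$ would, if carried through for odd $q$, reproduce exactly the paper's formula (\ref{14}).

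But the step you defer as ``final bookkeeping'' is where the real content lies, and as written your proof has a genuine gap there. Carrying out your own expansion mod $q^3$: the exponent-$2$ terms come from (i) $\rank M_S=h_G$ with $|S|=N_G-2$, contributing $q^2\sum_{e_1,e_2}[\Psi_{G\q e_1e_2}]_q\equiv 0 \bmod q^3$ by the divisibility $q\,|\,[\Psi_{G\q e_1e_2}]_q$, and (ii) $\rank M_S=h_G-1$ with $S=E(G)$, contributing $-q^2\,\#\{x:\corank M(x)=1\}\equiv q^2\,[\corank M\ge 2]_q \bmod q^3$. By Proposition \ref{prop1}(1), the locus $\corank M\ge2$ is precisely $\Sing(X_G)$, so you arrive at $[Q_1\cdots Q_{N_G}]_q\equiv q^2\bigl(c_2(G)_q+[\Sing(X_G)]_q\bigr)\bmod q^3$. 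The asserted equality $c_2^{\rm mom}(G)_q\equiv c_2(G)_q$ therefore \emph{requires} $[\Sing(X_G)]_q\equiv0\bmod q$. This is not bookkeeping: it is Theorem \ref{thmsing}, whose proof occupies all of Section 4 (the simultaneous elimination formula of Proposition \ref{propsimulred}, the new Dodgson and spanning-forest identities (\ref{id2}) and (\ref{PsiGlower1}), and an induction on $N_G$ using edge-connectivity). Your proposal offers no argument for this vanishing and no mechanism by which the higher-corank stratum could ``contribute nothing''; without it, the theorem is not proved.
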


The proof requires studying the singular locus $\Sing(X_G)$  of $X_G$, and in particular proving the following intermediate result.

\begin{thm} If $G$ has at least 3 vertices, then $\Sing(X_G)$ has a $c_1$ invariant.
\end{thm}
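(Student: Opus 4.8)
The plan is to translate the statement into linear algebra via the matrix--tree theorem and then count points on the singular locus modulo $q$ by fibering over a Grassmannian of kernel planes. Choosing a basis of the cycle space of $G$ gives an $h_G\times N_G$ matrix $C$ with entries in $\{0,\pm1\}$; set $\sM(x)=C\,\mathrm{diag}(x_1,\dots,x_{N_G})\,C^{T}=\sum_e x_e c_e c_e^{T}$, a symmetric $h_G\times h_G$ matrix depending linearly on $x$. By Cauchy--Binet and total unimodularity, $\Psi_G=\det\sM(x)$. The first step is to identify the singular locus intrinsically as
$$\Sing(X_G)=\{x:\corank\sM(x)\ge 2\}.$$
Indeed $\partial_e\det\sM=c_e^{T}\,\mathrm{adj}(\sM)\,c_e$; on the locus where $\rank\sM=h_G-1$ one has $\mathrm{adj}(\sM)=\kappa\,ww^{T}$ for a kernel vector $w\ne 0$, so all partials vanish only if $c_e\cdot w=0$ for every $e$, forcing $C^{T}w=0$ and hence $w=0$, a contradiction; meanwhile $\corank\ge 2$ makes $\mathrm{adj}(\sM)=0$, so $\det\sM$ and all its partials vanish. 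This step is purely formal and works in every characteristic.

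Next I would count $\Sing(X_G)$ modulo $q$ using the Gaussian binomial identity $\binom{k}{2}_q\equiv 1\pmod q$ for $k\ge 2$ and $\equiv 0$ otherwise. With $Z=\{(x,L):L\in\mathrm{Gr}(2,h_G),\ L\subseteq\ker\sM(x)\}$, projecting to $x$ gives $|Z|\equiv|\Sing(X_G)|\pmod q$, while projecting to $L$ gives $|Z|=\sum_L q^{\dim F_L}$, where $F_L=\{x:L\subseteq\ker\sM(x)\}$ is a linear subspace of $\A^{N_G}$. Hence it suffices to bound $\dim F_L$. Two cases are clean. If the rank-one forms $\{c_ec_e^{T}\}$ are linearly dependent, say $\sum_e v_e c_ec_e^{T}=0$ with $v\ne 0$, then $\sM$, and hence $\Sing(X_G)$, is invariant under the free additive translation $x\mapsto x+tv$, so $q\mid|\Sing(X_G)|$ at once. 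Otherwise, for $L=\langle w_1,w_2\rangle$ the image of $x\mapsto(\sM(x)w_1,\sM(x)w_2)$ lies in the hyperplane $\{(u_1,u_2):u_1\cdot w_2=u_2\cdot w_1\}$ by symmetry of $\sM$, so $F_L$ has codimension at most $2h_G-1$; when $2h_G\le N_G$ this forces $\dim F_L\ge 1$ for every $L$, every term $q^{\dim F_L}$ is divisible by $q$, and again $q\mid|\Sing(X_G)|$.

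The remaining, and hardest, case is when the $\{c_ec_e^{T}\}$ are independent while $2h_G>N_G$; then some planes $L$ give $F_L=\{0\}$, and one is reduced to showing that the number of such exceptional planes is $\equiv 0\pmod q$. I expect this to be the main obstacle. The model is $h_G=3$, where $\corank\ge 2$ means $\rank\le 1$: there $F_L=\{0\}$ exactly when the rank-one form $\ell\ell^{T}$ attached to $L=\ell^{\perp}$ fails to lie in the span $\sV=\langle c_ec_e^{T}\rangle$, i.e.\ when $Q(\ell)\ne 0$ for the quadratic form $Q$ defining the hyperplane $\sV\subset\mathrm{Sym}^2$. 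The exceptional planes then form the complement of a conic in $\PP^2$, and since every quadric in $\ge 3$ variables over $\FF_q$ has $\equiv 1\pmod q$ projective points (Chevalley--Warning), that complement has $\equiv 0\pmod q$ points. For general $h_G$ the exceptional locus is the determinantal subvariety $\{L:\sV\cap\mathrm{Ann}(L)=0\}$ of $\mathrm{Gr}(2,h_G)$, and the crux is to show its point count remains $\equiv 0\pmod q$. I would attack this by stratifying according to $\rank\sM(x)$ and reducing each stratum to a Chevalley--Warning estimate for the quadrics $\ell\mapsto Q(\ell)$ built from the rank-one generators, with the hypothesis of at least three vertices ($N_G\ge h_G+2$) supplying the extra variables these estimates require.
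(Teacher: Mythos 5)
Your setup is correct and takes a genuinely different route from the paper. The identification $\Sing(X_G)=\{x:\corank \sM(x)\geq 2\}$ is where the paper also starts (its Proposition \ref{prop1}(1), proved there via the Dodgson identity for symmetric matrices rather than the adjugate), but from that point the paper argues by induction on the number of edges: a simultaneous-elimination formula in the Grothendieck ring plus identities among Dodgson polynomials yield $[\Sing(X_G)]\equiv[\Psi^1,\Psi_1]-[\Sing(X_{G\backslash 1})]\bmod\Lef$, and $[\Psi^1,\Psi_1]=[\Psi_{G\backslash 1},\Psi_{G\q 1}]\equiv 0\bmod\Lef$ closes the induction (and gives the stronger statement in the Grothendieck ring). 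Your alternative — fibering the incidence variety $Z$ over $\mathrm{Gr}(2,h_G)$ — is sound in the steps you carry out: the Gaussian-binomial congruence $\binom{k}{2}_q\equiv 1\bmod q$ for $k\geq 2$, the free translation action when the forms $c_ec_e^{T}$ are $\FF_q$-dependent, and the bound $\dim F_L\geq N_G-2h_G+1$ coming from the symmetry constraint $u_1\cdot w_2=u_2\cdot w_1$ are all correct.

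The problem is that the remaining case is not a fringe case but the bulk of the theorem, and you have not proved it. The hypothesis ``at least $3$ vertices'' is $N_G\geq h_G+2$, which is far weaker than $N_G\geq 2h_G$; every graph with $h_G+2\leq N_G\leq 2h_G-1$ and $\FF_q$-independent forms $c_ec_e^{T}$ (for example $K_5$, or $K_4$ with a doubled edge) lands in your third case. There you must show that $\#\{L\in\mathrm{Gr}(2,h_G):\sV\cap\mathrm{Ann}(L)=0\}\equiv 0\bmod q$, where $\dim\sV=N_G$ and $\dim\mathrm{Ann}(L)=\binom{h_G-1}{2}$, and you establish this only for $h_G=3$, where $\mathrm{Ann}(L)$ is a line and the exceptional planes are the complement of a conic. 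For $h_G\geq 4$ the exceptional locus is a higher determinantal condition on the Grassmannian; worse, when $N_G<2h_G-1$ a dimension count ($\binom{h_G+1}{2}-\binom{h_G-1}{2}=2h_G-1>N_G$) shows a generic $L$ is exceptional, so you are claiming that the complement of a dense open subset of $\mathrm{Gr}(2,h_G)$ has exactly $1$ point modulo $q$ — a delicate statement that presumably requires the graphic structure of $\sV$ and not just that it is spanned by rank-one forms. The proposed ``stratify by $\rank\sM(x)$ and apply Chevalley--Warning'' is a plan, not an argument, so as it stands the proof covers only $N_G\geq 2h_G$, the degenerate case, and $h_G=3$.
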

This suggests  studying the $c_n$ invariants of the singular locus  of $X_G$ in  its own right.
In fact, we believe that the $c_1$ invariant of $\Sing(X_G)$ should vanish, in which case one could define its $c_2$ invariant, which we expect to be non-zero in general.
This would give a new graph invariant  $c^{\mathrm{sing}}_2(G)= c_2(\Sing\,  X_G)$,  which would be interesting to understand combinatorially.

\subsubsection{Vanishing for subdivergences} The second set of results extends our previous work on criteria for graphs to have weight drop \cite{BY}. 
 
\begin{thm}  \label{thmintrowd}
Let $G$ be an overall logarithmically divergent graph in $\phi^4$ theory. If $G$ has a non-trivial divergent subgraph then $c_2(G)_q=0$.
\end{thm}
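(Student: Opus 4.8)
The plan is to evaluate $c_2(G)_q$ by the denominator reduction and to show that a divergent subgraph forces the reduced polynomial to factor, after which its point count is divisible by $q$. First I would pass to a minimal divergent subgraph $\gamma \subsetneq G$, which by minimality contains no proper divergent subgraph; every graph with a divergent subgraph has such a minimal one, and this keeps the Dodgson polynomials of $\gamma$ under control. The hypothesis that $G$ lies in $\phi^4$ theory enters here, constraining $\gamma$ to have two or four external legs, so that $N_\gamma = 2h_\gamma - 1$ or $N_\gamma = 2h_\gamma$; thus $N_\gamma \le 2h_\gamma$, and this divergence inequality is the feature I intend to exploit. (If $G$ is too small for the reduction below to apply, there are only finitely many cases, checked directly.)

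Next I would fix an ordering of the edges of $G$ adapted to $\gamma$ and invoke the identity $c_2(G)_q \equiv (-1)^n [D^n_G]_q \bmod q$, valid for $5 \le n < N_G$ when $2h_G = N_G$, reducing the problem to proving $[D^n_G]_q \equiv 0 \bmod q$ at a suitable stage $n$. The technical heart is a factorization of the Dodgson polynomials of $G$ relative to $\gamma$: for index sets respecting the subgraph, such a polynomial should split as a single product of a Dodgson polynomial of $\gamma$ with one of $G \q \gamma$, in the disjoint edge variables of $\gamma$ and of $G \q \gamma$. I would derive this by expanding the defining minor along the block decomposition induced by $\gamma$, the cross terms dropping out because no spanning forest joins the two blocks except through the vertex onto which $\gamma$ is contracted. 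Performing the reduction in an order that respects this block structure, the polynomial $D^n_G$ at the relevant stage inherits a factorization $A \cdot B$ with $A$ in the variables of $\gamma$ and $B$ in the variables of $G \q \gamma$; the inequality $N_\gamma \le 2h_\gamma$ is what guarantees that this stage lies in the admissible range.

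With a factorization $D^n_G = A \cdot B$ into non-constant polynomials in disjoint variable sets of sizes $a, b \ge 1$, inclusion--exclusion gives $[D^n_G]_q = q^{b}[A]_q + q^{a}[B]_q - [A]_q[B]_q$, whose first two terms are automatically divisible by $q$. To kill the cross term I would use the divergence of $\gamma$: the inequality $N_\gamma \le 2h_\gamma$ should force the factor $A$ coming from $\gamma$ to be independent of at least one of the edge variables of its block, so that its zero locus carries a free affine line and $[A]_q \equiv 0 \bmod q$. Then $[D^n_G]_q \equiv 0 \bmod q$ and $c_2(G)_q = 0$; conceptually this is the arithmetic shadow of the weight drop attached to a subdivergence.

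The main obstacle is the factorization together with the vanishing of the $\gamma$-factor. I must show that, in the adapted order, the reduction produces a single product $A \cdot B$ rather than a genuine sum of products, and that $A$ really does lose a variable, that is, that $[A]_q$ is divisible by $q$ for every prime power $q$ and not merely by the characteristic $p$. I expect this to require a case split: the two-leg subdivergences, where $N_\gamma = 2h_\gamma - 1$ supplies an explicit surplus edge, should be straightforward, whereas the four-leg subdivergences, where $N_\gamma = 2h_\gamma$ makes $\gamma$ itself log-divergent and forces the surplus to come from the coupling between the two blocks, will be more delicate. A final routine check is that the reduction reaches a stage $n$ with $5 \le n < N_G$ without terminating earlier, which should again follow from the divergence inequality bounding the number of reducible variables from below.
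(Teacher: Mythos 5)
Your overall strategy (reduce to denominator reduction, exploit the divergence inequality $N_\gamma\le 2h_\gamma$, split into two-leg and four-leg cases) matches the paper's in outline, but the central technical claim on which your argument rests is false in the hard case. For a four-leg subdivergence, $G$ is a $4$-edge join of $A_1=\gamma$ and $A_2=G/\gamma$ (minus the join edges), and the paper's Lemma \ref{lem4EJ} computes the reduced denominator explicitly: after eliminating the four join edges and one edge on each side one gets
\[
D^6_G=\pm\bigl(P_{1}Q_{2}+Q_{1}P_{2}\bigr),
\]
a genuine \emph{sum} of two cross-products of polynomials in the disjoint variable sets, not a single product $A\cdot B$. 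You flag exactly this as your "main obstacle," but you offer no mechanism to overcome it, and none exists along your route: the inclusion--exclusion identity $[AB]_q=q^b[A]_q+q^a[B]_q-[A]_q[B]_q$ simply does not apply to $P_1Q_2+Q_1P_2$. The paper's resolution is a different idea entirely: project onto the $A_1$-variables and apply the Chevalley--Warning theorem fiber by fiber, using that the fiber polynomial has degree $2h_{A_2}$ in $N_{A_2}-1$ variables and that $2h_{A_2}\le N_{A_2}-2$ (which is where the divergence of $\gamma$ together with $2h_G=N_G$ enters). This is also why the paper can only prove vanishing at the level of point counts, not in the Grothendieck ring, for $4$-edge joins. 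Relatedly, your proposed reason for $[A]_q\equiv 0\bmod q$ ("$A$ loses a variable") is not the operative mechanism anywhere in the argument; the divisibility comes from low degree relative to the number of variables (Chevalley--Warning), or from linearity of graph polynomials in each variable.

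Two further points. First, your "routine check" that the reduction reaches an admissible stage $5\le n<N_G$ is not routine: denominator reduction is conditional on a factorization at every step, and the paper must establish it by hand (for $3$-edge joins the $5$-invariant vanishes outright, Proposition \ref{c2 3}; for $4$-edge joins the vanishing $\Psi^{1236,1245}=0$ inherited from the $3$-edge join $G\backslash 1$ is what allows edge $1$ to be reduced in Lemma \ref{lem4EJ}). Second, the cases you call "straightforward" are handled in the paper by separate devices: the $2$-edge join via $2$-vertex reducibility and weight drop (\cite{BY}, Proposition 36), and the $3$-edge join either by the vanishing of the $5$-invariant or by a direct Grothendieck-ring computation (Proposition \ref{c2 3 direct}). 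The statement closest to your factorization picture is Proposition \ref{reduce H} ($P=\Psi_{G\q H}^2$ when $m=2$), but it is explicitly conditional on the reduction existing and on a structural hypothesis on $P$, and the paper does not (and cannot) use it to prove the theorem.
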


Such a graph $G$ with a subdivergence can always be written as a $2,3$, or $4$-edge join. In the first two cases, we  prove that
$c_2(G)$ vanishes in the Grothendieck ring, but the case of a $4$-edge join is more subtle and we can only show the result on the level of point counting functions.
If reduced denominators $D^n_G$ exist for the elimination of all edges of the subdivergence, then in the last non-trivial step $D^n_G$ equals the square of the graph polynomial
of $G$ with fully contracted subdivergence. This explains the vanishing of the $c_2$ invariant on the level of denominator reduction.

Given the expected relation between vanishing $c_2$ and transcendental weight drop of Feynman amplitudes,   theorem  $\ref{thmintrowd}$ is evidence for   a folklore conjecture which states that the highest weight part of the lowest logarithmic power of the renormalised amplitudes in $\phi^4$ theory  
is independent of the choice of renormalisation scheme.

\subsubsection{Combinatorial identities}  In the light of conjecture $\ref{con1}$, and the many observed but unexplained algebraic relations between residues of Feynman graphs,
an important question is to understand precisely which combinatorial information is contained in the $c_2$ or related  invariants.

For a list of currently known or conjectured properties of the $c_2$ invariant, see \cite{BS}, \S4. To these can be added  some further relations for the denominator reduction described in
\cite{BY}, \S4.4-4.7, which immediately imply identities for the $c_2$ invariant via $(\ref{introDn2c2})$. Although an overarching combinatorial explanation for all these identities is
still lacking,  in $\S6$ we  describe  some new additive properties of  denominator polynomials which give a single  explanation for many of the
identities of \cite{BY}.

Finally, there remains the question of trying to relate $c_2(G)$ to other classical invariants in the theory of graphs. A tantalizing but  mysterious connection between
knots and Feynman integrals was investigated  by Broadhurst and Kreimer in the 90's \cite{BK, B3, BK1, K1}, but has proven very hard to verify in concrete cases because of the
difficulty in computation of Feynman integrals, and the high loop orders of the diagrams involved. The $c_2$ invariant provides us with a  tool to investigate such identities
without having to compute any integrals.

In this paper, we investigated the  4-term relation for chord diagrams, which was shown  to hold in some cases in \cite{bk4tr}, but found no such relation on the level of $c_2$ invariants
in  $\phi^4$ theory.  To our surprise, however,   we found  that the 4-term identity actually holds true on the level of the denominator polynomials $D^7_G$.

\section{Reminders on graph polynomials}
For the convenience of the reader, we gather some of the results on graph polynomials and various auxiliary polynomials to be used later.

\subsection{Graph matrix}
Let $G$ be any graph. We will use the following matrix representation for the   graph polynomial. 

\begin{defn} Choose an orientation on the edges of $G$, and for every edge $e$ and vertex $v$ of $G$, define the incidence matrix:
$$(\mathcal{E}_G)_{e,v} = \left\{
                           \begin{array}{rl}
                             1, & \hbox{if the edge } e \hbox{ begins at } v \hbox{ and does not end at } v,\\
                             -1, & \hbox{if the edge } e  \hbox{ ends at } v \hbox{ and does not begin at } v,\\
                             0, & \hbox{otherwise}.
                           \end{array}
                         \right.
 $$
Let $A$ be the diagonal matrix with entries  $x_e$, for $e \in E(G)$, and set
$$\widetilde{M}_G=\left(
  \begin{array}{c|c}
    A  & \mathcal{E}_G  \\
    \hline
  {-}\mathcal{E}_G^T&  0  \\
  \end{array}
\right)
$$
where the first $N_G$ rows and columns are indexed by the set of edges of $G$, and the remaining $v_G$ rows and columns are indexed by the set of vertices of $G$, in some order.
The matrix $\widetilde{M}_G$ has corank  $\geq 1$.  Choose any vertex of $G$ and let $M_G$ denote the square $(N_G+v_G-1)\times (N_G+v_G-1)$ matrix obtained from it by deleting
the row and column indexed by this vertex.
\end{defn}

It follows from the matrix-tree theorem that the graph polynomial satisfies 
$$\Psi_G=\det (M_G)\ .$$
This formula implies that $\Psi_G$ vanishes if $G$ has more than one component.

\subsection{Dodgson polynomials} \label{subsectDodg}

We use the following notation.
\begin{defn}
If $f=f_1+f^1 x_1$ and $g=g_1+g^1 x_1$ are polynomials of degree one in $x_1$, recall that their resultant is defined by:
\begin{equation} \label{resultantdef}
[f,g]_{x_1}= f^1g_1-f_1g^1\ .
\end{equation}
\end{defn}

\begin{defn}  Let $I,J, K$ be subsets of the set of edges of $G$ which satisfy $|I|=|J|$. Let  
$M_G(I,J)_K$ denote the matrix obtained from $M_G$ by removing the rows indexed by the set $I$ and columns indexed by the set $J$, and setting $x_e=0$ for all $e\in K$. 
Let 
\begin{equation} \label{Dogsondefn} \Psi_{G,K}^{I,J}=\det M_G(I,J)_K\ . \end{equation}
\end{defn}
 We  write $\Psi^I_{G,K}$ as a shorthand for $\Psi^{I,I}_{G,K}$ and drop the subscript $K$ if it is empty.
Since the matrix $M_G$ depends on various choices, the polynomials $\Psi^{I,J}_{G,K}$ are only well-defined up to sign. In what follows, for any graph $G$, we shall fix a particular
matrix $M_G$ and this will fix all the signs in the polynomials $\Psi^{I,J}_{G,K}$ too.

We now state some identities between Dodgson polynomials which will be used in the sequel. The proofs can be found in (\cite{BrFeyn}, \S2.4-2.6).

\begin{enumerate}
\item \emph{The contraction-deletion formula}. The graph polynomial  is linear in its variables and fulfills the contraction-deletion relation
\begin{equation}\label{1a}
\Psi_G = \Psi_{G\backslash e} x_e + \Psi_{G\q e}\ ,
\end{equation}
where the graph polynomial of disconnected graphs is zero. Likewise the contraction ($\q$) of a self-loop is zero in the graph algebra and $\Psi_0=0$.
More generally, if $|I|=|J|$, we have:
  $$\Psi^{Ie,Je}_{G,K}=\pm \Psi^{I,J}_{G\backslash e, K} \, \hbox{ and }  \Psi^{I,J}_{G,Ke} = \pm \Psi^{I,J}_{G\q e, K}\ . $$
 \vspace{0.05in}
\item \emph{Dodgson identities}. Let $I,J$ be two subsets of edges of $G$ such that $|I|=|J|$ and let $a,b,x\notin I \cup J
\cup K$ with $a,b<x$  (or $x<a,b$
). The  first identity is:
$$ \big[\Psi^{I,J}_{G,K}, \Psi^{Ia,Jb}_{G,K} \big]_x = \Psi^{Ix,Jb}_{G,K} \Psi^{Ia,Jx}_{G,K}\ .$$
 Let $I,J$ be two subsets of edges of $G$ such that $|J|=|I|+1$ and let $a,b,x\notin I\cup J
\cup K$ with $x<a<b$. Then the second identity is:
$$ \big[\Psi^{Ia,J}_{G,K}, \Psi^{
Ib,J}_{G,K} \big]_x = - \Psi^{Ix,J}_{G,K} \Psi^{Iab,Jx}_{G,K}\ . $$
\vspace{0.05in}
\item \emph{Pl\"ucker identities}. Let $i_1<i_2<i_3<i_4$. Then
$$\Psi^{i_1 i_2,i_3i_4}_G -  \Psi^{i_1i_3,i_2i_4}_G+\Psi^{i_1 i_4,i_2i_3}_G=0\ .$$
For an increasing sequence of edges $i_1<\ldots < i_6$ we have
$$\Psi^{i_1 i_2 i_3,i_4i_5i_6}_G - \Psi^{i_1i_2i_4,i_3i_5i_6}_G+ \Psi^{i_1i_2i_5,i_3i_4i_6}_G- \Psi^{i_1i_2i_6,i_3i_4i_5}_G=0\ .$$

\item \emph{Vanishing}.
Suppose that $E=\{e_1,\ldots, e_k\}$ is the set of  edges which are adjacent to a given  vertex of $G$. Then
$\Psi^{I,J}_{G,K} =0$ if  $E\subset I$ or $E \subset J$.
Now suppose that $E=\{e_1,\ldots, e_k\}$ is a set of edges in $G$ which contain a cycle. Then 
$\Psi^{I,J}_{G,K} =0$ if $( E\subset I\cup  K$  or  $E\subset J \cup K)$   and $E \cap I \cap J= \emptyset$.

\end{enumerate}

\subsection{Spanning forest  polynomials}
Dodgson polynomials are in turn linear combinations of more basic polynomials, called spanning forest polynomials \cite{BY}.

\begin{defn} 
Let $X$
be a set of vertices of $G$, and let $P = \{P_1, \ldots, P_k\}$ be a  partition of  $X$.   Define the spanning forest polynomial by
\[
    \Phi_G^P = \sum_F\prod_{e \not\in F} x_e
\]
where the sum runs over spanning forests $F=T_1\cup \ldots \cup T_k$ where each tree $T_i$ (possibly a single vertex) of $F$ 
contains the  vertices in $P_i$ and no other vertices of $X$. Thus $V(T_i) \supseteq P_i$ and $V(T_i)\cap P_j =\emptyset $ for  $j\neq i$.  
\end{defn}

We represent $\Phi_G^P$ by associating a colour to each part of $P$ and drawing $G$ with the vertices in $X$ coloured accordingly.

\begin{prop}\label{exists}
Let $I, J$ be sets of edges of $G$ with $|I| = |J|$ and $I \cap J = \varnothing$.  Then we can write
  \[
    \Psi_G^{I,J} = \sum_{i} f_i \Phi_G^{P_i}
  \]
where the sum runs over partitions of $V(I\cup J)$ and $f_i \in \{-1, 0, 1\}$.  In particular, $f_i\neq 0$ precisely when each forest consistent with $P_i$ becomes
a tree in $G/I\backslash J$ and in $G/J\backslash I$.
\end{prop}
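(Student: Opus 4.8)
The plan is to compute $\Psi_G^{I,J}=\det M_G(I,J)$ directly from the block structure of $M_G$ and then to reorganise its monomials into spanning forest polynomials. First I would observe that each variable $x_e$ occurs at most once in $M_G(I,J)$, namely in the diagonal block $A$, and only when the $e$-row and the $e$-column both survive, i.e.\ when $e\in E(G)\setminus(I\cup J)$. Hence $\Psi_G^{I,J}$ is multilinear in the variables $\{x_e:e\notin I\cup J\}$, and it suffices to determine the coefficient $c_S$ of $\prod_{e\in S}x_e$ for each $S\subseteq E(G)\setminus(I\cup J)$. I extract $c_S$ by using the diagonal entries $x_e$ exactly for $e\in S$ and taking every other entry of the permutation expansion from the incidence blocks $\pm\mathcal{E}_G$, which are the only non-$x$ entries of the matrix.

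The key computation is that $c_S$ factorises. After removing the matched diagonal $S$-entries, the only non-zero entries of the remaining submatrix occur in edge-row/vertex-column and vertex-row/edge-column positions, so any surviving term of the permutation expansion pairs edge-rows with vertex-columns and vertex-rows with edge-columns. This splits the remaining determinant, up to an explicit sign, as a product of two maximal minors of the reduced incidence matrix, indexed by the edge sets $T_1=E(G)\setminus(I\cup S)$ and $T_2=E(G)\setminus(J\cup S)$. Since $\mathcal{E}_G$ is totally unimodular, and since a maximal minor of the reduced incidence matrix equals $\pm1$ when its edges form a spanning tree and $0$ otherwise, I conclude that $c_S\in\{-1,0,1\}$ and that $c_S\neq0$ exactly when both $T_1$ and $T_2$ are spanning trees. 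Writing $F=E(G)\setminus(I\cup J\cup S)$, this says precisely that $F$ is a forest avoiding $I\cup J$ for which $F\cup I$ and $F\cup J$ are spanning trees, equivalently that $F$ becomes a spanning tree both in $G/I\backslash J$ and in $G/J\backslash I$.

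Next I would group the contributing $F$ according to the partition $P$ they induce on $V(I\cup J)$, with two endpoints lying in the same block of $P$ precisely when they are joined by $F$. For fixed $P$, the non-zero monomials of $\Psi_G^{I,J}$ are indexed by exactly the spanning forests summed in $\Phi_G^{P}$, so they assemble into these polynomials. Moreover, whether $F\cup I$ (resp.\ $F\cup J$) is a spanning tree depends only on how the endpoints of the edges of $I$ (resp.\ $J$) are distributed among the blocks of $F$, hence only on $P$; this yields the stated dichotomy that $f_i\neq0$ precisely when \emph{every} forest consistent with $P_i$ becomes a tree in $G/I\backslash J$ and in $G/J\backslash I$.

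The step I expect to be the real obstacle is sign consistency: to collect the monomials into a single $f_P\,\Phi_G^{P}$ with $f_P\in\{-1,0,1\}$, the sign $c_S=\pm1$ must be the \emph{same} for all forests $F$ realising a given partition $P$. I would establish this by connecting any two such forests through elementary edge-exchanges that preserve the induced partition and checking that the product of the two incidence minors changes sign compatibly at each exchange, tracking the signs through the Dodgson and Pl\"ucker identities recalled above; alternatively, one can invoke the all-minors matrix-tree theorem, which packages the sign of each minor as a function of the data $(I,J,P)$ alone. Controlling these signs uniformly across a partition class is where the genuine work lies, the remainder being bookkeeping with the incidence matrix.
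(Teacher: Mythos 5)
Your argument is essentially the paper's: the proposition is recalled from \cite{BY}, and the note following it records exactly your factorization of each monomial coefficient as a product of two incidence minors, $\det[E_I N]\det[E_J N]$ in $(\ref{detmatrices})$, with total unimodularity giving coefficients in $\{-1,0,1\}$ and the two spanning-tree conditions characterizing the support. The sign-consistency across all forests realizing a fixed partition --- which you correctly single out as the only delicate step --- is likewise not proved in this paper but deferred to \cite{BY}, where it is settled by showing that $(\ref{detmatrices})$ depends only on the induced partition, so your sketch tracks the intended argument.
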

Note that the sign $f_i$ can be computed by taking any forest $F$ consistent with $P_i$ and then considering the determinant of the  matrix obtained from $M_G$ by removing rows and columns
indexed by the set of edges not in $F$.  This determinant reduces \cite{BY} to
\begin{equation} \label{detmatrices}
\det [E_I N] \det[E_J N]
\end{equation}
where $E_I$ is the matrix of the columns corresponding to edge indices $I$ of $\mathcal{E}_G$ with one row removed, likewise for $E_J$, and $N$ is the matrix of columns
corresponding to edges of $G\backslash (I\cup J)$ which do not appear in the forest $F$.

\subsection{Denominator reduction}
\begin{defn} \label{Def5inv} Let $i,j,k,l,m$ be  five distinct edges in  $G$. The five-invariant  of these edges is the polynomial defined up to a sign by the resultant
$${}^5\Psi_G(i,j,k,l,m) =  \pm[\Psi^{ij,kl}, \Psi^{ik,jl}]_m \ .$$
Permuting the order of the edges $i,j,k,l,m$ only affects the overall sign.
\end{defn}
Denominator reduction is the name given to the elimination of variables by taking iterated resultants, starting with the 5-invariant.
Let $G$ be a graph, and order its edges $1, \ldots, {N_G}$.  Set  $D^5_G(1,\ldots, 5) =\pm {}^5 \Psi_G(1,\ldots, 5)$, and define 
a sequence of polynomials (conditionally) as follows. 

\begin{defn} 
Let  $n\geq 5$ and suppose that $D^n_G(1,\ldots, n)$ is defined, and further  that it factorizes into a product of  factors $f,g$ of degree $\leq 1$   in $x_{n+1}$. Then set 
$${D}^{n+1}_G(1,\ldots, {n+1}) =\pm [  f,g]_{n+1} \ ,$$
We say that $G$ is denominator reducible if there exists an order of edges such that ${D}^n_G(1,\ldots, {n})$ is defined for all $n$.
We say that $G$ has weight drop if there exists an order of edges such that ${D}^n_G(1,\ldots, {n})$ vanishes for some $n$.
\end{defn}

The relation between the denominator reduction and  $c_2$ invariant is given by the following theorem (theorem 29 in \cite{BS}).

\begin{thm} \label{thmDenom}
Let $G$ be a connected graph with  $2h_G\leq N_G$. Suppose that $D^n_G(e_1,\ldots, e_n)$ is the result of
the denominator reduction after $5\leq n<N_G$ steps. Then
\begin{equation}\label{3}
c_2(G)_q \equiv  (-1)^n [D^n_G(e_1,\ldots, e_n)]_q \mod q\ .
\end{equation}
\end{thm}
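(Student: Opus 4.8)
The plan is to compute the affine point count $[X_G]_q = \#\{x\in\FF_q^{N_G} : \Psi_G(x)=0\}$ modulo $q^3$ and read off the coefficient of $q^2$, which by Definition $\ref{defncgeneral}$ is $c_2(G)_q$. The whole method rests on the fact that $\Psi_G$ and all the Dodgson and denominator polynomials built from it are of degree at most one in each variable, so that the contraction-deletion relation $(\ref{1a})$ turns into an effective recursion on point counts. Concretely, for a polynomial $F = F^1 x_e + F_1$ that is linear in $x_e$ (where $F^1,F_1$ are again of Dodgson type), counting the fibre over each choice of the remaining $N_G-1$ coordinates gives
\[
\#\{F=0\} = q^{N_G-1} - \#\{F^1=0\} + q\,\#\{F^1=F_1=0\},
\]
the two counts on the right being taken over $\FF_q^{N_G-1}$. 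Iterating this elementary lemma is exactly the arithmetic shadow of the denominator reduction, and the Dodgson identities of $\S\ref{subsectDodg}$ serve to identify the coefficients $F^1$, $F_1$ arising at each stage.

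I would establish the base case $n=5$ first, i.e.\ the identity $c_2(G)_q \equiv -[{}^5\Psi_G(1,\ldots,5)]_q \mod q$, which carries the sign $(-1)^5=-1$. This is proved by eliminating the coordinates $x_1,\ldots,x_5$ in turn via the counting lemma, rewriting every coefficient that appears as a Dodgson polynomial through the contraction-deletion formula. The first Dodgson identity together with the definition of the five-invariant as the resultant $[\Psi^{12,34},\Psi^{13,24}]_5$ are precisely what is needed to collect the surviving terms into the single factor $[{}^5\Psi_G]_q$ with prefactor $q^2$. The hypothesis $2h_G\le N_G$ enters here: since $\deg\Psi_G = h_G$, enough coordinates remain free that the Chevalley-Warning theorem forces all spurious lower-order loci to vanish modulo $q^3$, leaving only the $q^2$-contribution.

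For the inductive step, assume $c_2(G)_q\equiv(-1)^n[D^n_G]_q \mod q$ and that $D^n_G = fg$ factors with $f=f^1 x_{n+1}+f_1$ and $g=g^1 x_{n+1}+g_1$ of degree at most one in $x_{n+1}$, so that $D^{n+1}_G=\pm[f,g]_{n+1}$ with $[f,g]_{n+1}=f^1 g_1 - f_1 g^1$. It then suffices to prove the one-step relation $[D^n_G]_q\equiv -[D^{n+1}_G]_q \mod q$, which I would extract from the counting lemma applied to $x_{n+1}$ on the decomposition $\{fg=0\}=\{f=0\}\cup\{g=0\}$. On the open locus where $f^1g^1\neq0$ the roots of $f$ and $g$ in $x_{n+1}$ coincide exactly when $[f,g]_{n+1}=0$, and this coincidence produces the term $-[D^{n+1}_G]_q$; all other loci, where a leading coefficient vanishes, contribute either full powers of $q$ or lie inside the vanishing of $[f,g]_{n+1}$ only along degenerate subvarieties. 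Combining the base case with this step yields the formula for all $5\le n<N_G$.

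The main obstacle is the bookkeeping of these error loci. At each elimination one generates a cascade of degenerate sets (where leading coefficients of the factors vanish), and the correction terms such as $\#\{f^1g^1\neq0\}$ and $\#\{f^1=0,\,f_1\neq0,\,g^1\neq0\}$ must all be shown to be divisible by $q$. This is exactly where Chevalley-Warning is indispensable: one must verify, via the degree bound $\deg\Psi_G=h_G\le N_G/2$ and a careful count of the coordinates still free at each stage, that the sum of the degrees of the polynomials cutting out each degenerate locus stays strictly below the number of remaining variables. A secondary subtlety is the base case, where one must confirm that the term surviving the Dodgson simplifications is genuinely the five-invariant rather than some other resultant; this relies on the fixed sign conventions for $M_G$ and on the Pl\"ucker relations to reconcile the orderings of the five eliminated edges.
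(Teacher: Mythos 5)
The paper does not actually prove this statement: it is quoted verbatim as Theorem~29 of \cite{BS}, and your outline reconstructs essentially the argument given there --- fibrewise counting over a variable in which everything has degree one, the Dodgson identities to identify the coefficients that arise, and Chevalley--Warning (via the bound $2h_G\le N_G$, which gives $\deg(f^1g^1)=2h_G-n-2<N_G-n-1$) to kill the leading-coefficient locus in the one-step identity $[fg]_q\equiv -[f^1g^1]_q-[[f,g]_{x}]_q \bmod q$. So your plan is the same approach as the cited source; the only caveats are that the base case $n=5$, where almost all of the work in \cite{BS} actually lies, is left here as a sketch, and that of the ``error loci'' you list only $[f^1g^1]_q$ genuinely needs to be shown divisible by $q$ (terms like $\#\{f^1=0,\,f_1\neq0,\,g^1\neq0\}$ are absorbed into the exact count and require no separate vanishing).
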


\section{The $c_2$ invariant in momentum space}
For any  primitive log-divergent  graph $G$,  the residue $I_G$ of $G$ can be written as an integral in various different representations.  From a physical point of view,
the most natural of these  is the representation of $I_G$ as an integral in momentum space \cite{CENSUS}. Other possibilities are parametric space as explained in the introduction,
position space, related to momentum space by a Fourier transform,  and dual parametric space which is linked to the parametric
formulation (\ref{2}) by inversion of the Schwinger coordinates  $x_e$. In the spirit of conjecture \ref{con1} for graphs which have a residue,  all these representations should
lead to equivalent $c_2$ invariants.

Because we work over a general field $k$ which does not necessarily contain $\sqrt{-1}$ or may have characteristic 2 the choice of metric becomes relevant
for the definition of Feynman rules in momentum and in position space. Here it is best to use a twistor type metric with signature $(+,-,+,-)$. We choose
the metric $\eta$ to be of the form
\begin{equation}\label{8}
\eta=\left(\begin{array}{cccc}0&1&0&0\\
1&0&0&0\\
0&0&0&1\\
0&0&1&0\end{array}\right)
\end{equation}
and write $p=(p^+,p^-,p'^+,p'^-)$. 
Then the propagator of a massless particle becomes $1/Q(p)$ with (see \cite{IZ})
\begin{equation} \label{Qasp}
Q(p)=p^+p^-+p'^+p'^-, 
\end{equation}
which is linear in the coordinates. 
The value of the residue does not depend on the chosen metric. Physically this means that the residue is a scalar.

Likewise, in position space the propagator between $x$ and $y$ in $k^4$ is $1/Q(x-y)$.

In the following we focus on momentum space. We fix a basis of $h_G$ independent cycles in $G$ with respect to which the
momenta $p=(p_1,\ldots,p_{h_G})$ are routed. The graph $G$ has $N_G$ edges with propagators $1/Q_1(p),\ldots,1/Q_{N_G}(p)$.
We will show that the `Schwinger trick' lifts to the $c_2$ invariant proving the existence of a $c_2$ invariant in momentum space if $2h_G\geq N_G$ and
its equivalence with (\ref{c2modq2}) for log-divergent graphs. 

\begin{ex}\label{ex1}
\begin{figure}[ht]
\epsfig{file=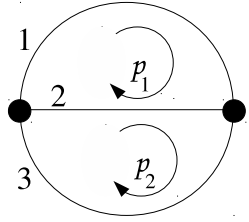,width=\textwidth}
\caption{The sunset graph.}
\end{figure}
We consider the sunset graph in fig.\ 1.
The edges 1,2,3 have the propagators $1/Q_1,1/Q_2,1/Q_3$ with
\begin{eqnarray*}
Q_1&=&p_1^+p_1^-+p_1'^+p_1'^-\\
Q_2&=&(p_1^+-p_2^+)(p_1^--p_2^-)+(p_1'^+-p_2'^+)(p_1'^--p_2'^-)\\
Q_3&=&p_2^+p_2^-+p_2'^+p_2'^-
\end{eqnarray*}
An explicit computer calculation using Stembridge's reduction \cite{Stem} yields
$$[Q_1Q_2Q_3]=3\Lef^7-7\Lef^5+4\Lef^4+4\Lef^3-3\Lef^2.$$\
The momentum space $c_2$ invariant exists (see proposition-definition \ref{def1} below) and is equal to  the $\Lef^2$ coefficient of $[Q_1Q_2Q_3]$, namely $-3\mod\Lef$.
\end{ex}

 The key tool in the Schwinger trick is the universal quadric
\begin{equation}
\sQ(x,p)=x_1Q_1(p)+x_2Q_2(p)+\ldots + x_{N_G}Q_{N_G}(p).
\end{equation}
From the matrix-tree theorem used in the Schwinger trick \cite{IZ} we conclude that there exists a symmetric $h_G\times h_G$ matrix $N$ such that
\begin{eqnarray}\label{9}
\sQ(x,p)&=&(p^-,p'^-)\left(\begin{array}{cc}N(x)&0\\
0&N(x)\end{array}\right)\left(\begin{array}{c}p^+\\
p'^+\end{array}\right),\quad\hbox{with}\\
\hbox{det}\,N(x)&=&\Psi_G(x).
\end{eqnarray}
Here $p^\pm=(p^\pm_1,\ldots,p^\pm_{h_G})$ and likewise $p'^\pm$.
\begin{prop}\label{prop1}
\begin{enumerate}
\item
The singular locus of $X_G$ is given by
\begin{equation}\label{10}
\Sing(X_G)=\{x:\rank N(x)<h_G-1\}.
\end{equation}
\item
Let $I\subseteq\{1,\ldots,N_G\}$ and $N_{\bar{I}}(x)=N(x)|_{x_k=0,\,{\rm if}\,k\not\in I}$ be obtained from $N$ by setting all variables to zero whose index is not in $I$. Then
\begin{equation}\label{11}
(\Lef-1)\Lef^{|I|-1}[Q_{i,i\in I}]=(\Lef-1)\Lef^{2h_G-1}[N_{\bar{I}}\cdot p^+,N_{\bar{I}}\cdot p'^+].
\end{equation}
\item
With $\Psi_{G,{\bar{I}}}(x)=\Psi_G(x)|_{x_k=0,\,{\rm if}\,k\not\in I}$ we have
\begin{equation}\label{12}
[N_{\bar{I}}\cdot p^+,N_{\bar{I}}\cdot p'^+]\equiv(\Lef^2-1)[\Psi_{G,{\bar{I}}}]-\Lef^2[\rank N_{\bar{I}}<h_G-1]+\Lef^{|I|}\!\mod\Lef^4.
\end{equation}
\end{enumerate}
\end{prop}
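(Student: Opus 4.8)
The plan is to read the left-hand side as the class of a single incidence variety and then stratify by the rank of $N_{\bar I}$. Let
$$Z=\{(x,p^+,p'^+)\in\A^{|I|}\times\A^{h_G}\times\A^{h_G}:N_{\bar I}(x)\,p^+=0,\ N_{\bar I}(x)\,p'^+=0\},$$
where the $x$-coordinates run over $\A^{|I|}$ because $x_k=0$ for $k\notin I$; the quantity $[N_{\bar I}\cdot p^+,N_{\bar I}\cdot p'^+]$ on the left is then $[Z]$. The key observation is that for a fixed value of $x$ the fibre of $Z$ over it is $\ker N_{\bar I}(x)\times\ker N_{\bar I}(x)$, an affine space of dimension $2\corank N_{\bar I}(x)=2\big(h_G-\rank N_{\bar I}(x)\big)$. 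Over $\FF_q$ this already gives the fibrewise count $q^{2\corank N_{\bar I}(x)}$, and the whole identity is visible after summing; the work is to lift this to the Grothendieck ring modulo $\Lef^4$.

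First I would stratify the base $\A^{|I|}$ by the locally closed subvarieties $S_r=\{x:\rank N_{\bar I}(x)=r\}$, which are cut out by the vanishing and non-vanishing of minors of $N_{\bar I}$. Over each $S_r$ the kernel of $N_{\bar I}$ is a vector bundle of constant rank $h_G-r$, hence Zariski-locally trivial, so the restriction $Z|_{S_r}\to S_r$ is an affine fibration and
$$[Z]=\sum_{r}\Lef^{2(h_G-r)}\,[S_r].$$
Since $2(h_G-r)\geq 4$ as soon as $r\leq h_G-2$, every stratum of corank at least two contributes $0$ modulo $\Lef^4$, so only the corank $0$ and corank $1$ strata matter and I never need to evaluate $[S_r]$ for $r\leq h_G-2$.

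Next I would identify the two surviving strata using the identity $\det N_{\bar I}(x)=\Psi_{G,\bar I}(x)$, obtained by restricting the Schwinger-trick relation $\det N(x)=\Psi_G(x)$ to the variables in $I$. The corank-zero stratum is $\{\Psi_{G,\bar I}\neq 0\}$, with class $\Lef^{|I|}-[\Psi_{G,\bar I}]$ and one-point fibre; the corank-one stratum is $\{\rank N_{\bar I}<h_G\}\setminus\{\rank N_{\bar I}<h_G-1\}$, with class $[\Psi_{G,\bar I}]-[\rank N_{\bar I}<h_G-1]$ and fibre of class $\Lef^2$. Substituting into the display gives
$$[Z]\equiv\big(\Lef^{|I|}-[\Psi_{G,\bar I}]\big)+\Lef^2\big([\Psi_{G,\bar I}]-[\rank N_{\bar I}<h_G-1]\big)\mod\Lef^4,$$
which rearranges at once to $(\Lef^2-1)[\Psi_{G,\bar I}]-\Lef^2[\rank N_{\bar I}<h_G-1]+\Lef^{|I|}$, as claimed.

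The computation is short, and the only genuine obstacle is the Grothendieck-ring bookkeeping rather than the arithmetic: one must verify that the rank loci $S_r$ are honest locally closed subvarieties and that the family of kernels is piecewise trivial over each, so that the fibre class is exactly $\Lef^{2(h_G-r)}$ and not merely correct as a point count. The cleanest way to supply the required local trivialisations is to use Cramer's rule on the Zariski open sets where a chosen $r\times r$ minor of $N_{\bar I}$ is invertible, which parametrises $\ker N_{\bar I}$ explicitly; along the corank-one locus one may alternatively take the columns of the adjugate matrix, which span the one-dimensional kernel there. Since only the corank $\leq 1$ strata survive modulo $\Lef^4$, it suffices to carry this out in those two cases.
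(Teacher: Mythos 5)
Your treatment of part (3) is correct and is essentially the paper's own argument: stratify the base by the corank of $N_{\bar I}$, observe that the fibre over a point of corank $c$ is $\ker N_{\bar I}\times\ker N_{\bar I}\cong\A^{2c}$, discard all strata with $c\geq 2$ modulo $\Lef^4$, and identify the corank $>0$ locus with $V(\Psi_{G,\bar I})$ via $\det N_{\bar I}=\Psi_{G,\bar I}$. The paper states this more tersely (it does not spell out the local triviality of the kernel bundle over each rank stratum, which your Cramer's-rule/adjugate remark supplies), but the route is the same and your bookkeeping rearranges correctly to \eqref{12}.

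The genuine gap is that the proposition has three parts and you have proved only the third. Part (2) requires computing the class of the universal quadric $\sQ_I=\sum_{i\in I}x_iQ_i$ in two ways: once as a family of hyperplanes in the $x$-variables fibred over momentum space $\A^{4h_G}$ (giving $[\sQ_I]=\Lef^{|I|-1}(\Lef^{4h_G}-[Q_{i,i\in I}])+\Lef^{|I|}[Q_{i,i\in I}]$), and once, using the block-diagonal form \eqref{9}, as a family of hyperplanes in the $p^-$-variables fibred over $\A^{|I|+2h_G}$; equating the two expressions yields \eqref{11}. This double-fibration step is the ``Schwinger trick'' at the level of the Grothendieck ring and is not a formal consequence of anything in your write-up. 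Part (1) is also nontrivial: one must first normalize $N$ by row and column operations so that each diagonal entry contains a variable $x_i$ occurring nowhere else in the matrix, so that $\partial_{x_i}\Psi_G=\det N^{i,i}$; then the inclusion $\Sing(X_G)\supseteq\{\rank N<h_G-1\}$ is immediate, while the reverse inclusion needs the Dodgson identity for the symmetric matrix $N$, namely $(\det N^{i,j})^2=\det N^{i,i}\det N^{j,j}-\det N\det N^{ij,ij}$, to upgrade the vanishing of $\det N$ and of all diagonal minors to the vanishing of all $(h_G-1)\times(h_G-1)$ minors. Moreover, part (1) is actually used downstream (the paper needs $[\rank N<h_G-1]=[\Sing(X_G)]$ when $I$ is the full edge set), so it cannot be omitted. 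As written, the proposal establishes one third of the statement.
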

\begin{proof}
\begin{enumerate}
\item 
With elementary row and column transformations (which correspond to a change of cycle basis) we can transform $N$ into a matrix $\tilde N$ with the property that
in each diagonal entry $\tilde{N}_{i,i}$ there exists a variable (say $x_i$) which does not occur in any other entry of $\tilde{N}$.
Because elementary row and column transformations preserve the rank of the matrix we may assume without restriction that $N=\tilde{N}$.
Let $x\in\hbox{Sing}\,(X_G)$. 
We thus have $\partial_{x_i}\Psi_G(x)=\det N^{i,i}(x)=0$ for $i=1,\ldots,h_G$ where $N^{J,K}$ is the matrix $N$ with rows $J$ and columns $K$ deleted.
We have $\det N(x)=0$,  and  the Dodgson identity for the symmetric matrix $N$:
$$(\det N^{i,j})^2=\det N^{i,i}\det N^{j,j}-\det N\det N^{ij,ij},$$
implies $\det N(x)^{i,j}=0$ for all $i,j=1,\ldots,h_G$. Hence $\rank N<h_G-1$.

On the other hand, if $\rank N(x)<h_G-1$ then $\det N(x)^{i,j}=0$ for all $i,j=1,\ldots h_G$, and in particular   $\partial_{x_i}\Psi_G(x)=\det N^{i,i}(x)=0$ for $i=1,\ldots,h_G$.  Hence  $x\in\Sing(X_G)$ and (\ref{10}) is established.
\item
Consider the universal quadric $\sQ_I=\sum_{i\in I}x_iQ_i$ and calculate its class in the Grothendieck ring in two different ways.

Firstly, $\sQ_I$ defines a family of hyperplanes in the $|I|$ dimensional affine space $\A^{|I|}$ with coordinates $x_i$.
Consider the fiber of the projection $V(\sQ_I) \rightarrow \A^{4h_G}$. In the generic case it is a hyperplane  in $\A^{|I|}$ whose class is $\Lef^{|I|-1}$.
Otherwise, all $Q_i,i\in I$ vanish and the fiber is $\A^{|I|}$. We have
$$[\sQ_I] = \Lef^{|I|-1}(\Lef^{4h_G}-[Q_{i,i\in I}]) + \Lef^{|I|}[Q_{i,i\in I}].$$

Secondly, from (\ref{9}) we have
$$\sQ_I(x,p)=(p^-,p'^-)\left(\begin{array}{cc}N_{\bar{I}}(x)&0\\
0&N_{\bar{I}}(x)\end{array}\right)\left(\begin{array}{c}p^+\\
p'^+\end{array}\right)$$
and so $\sQ_I$ also defines a family of hyperplanes in the $p^-$ variables. We now consider the fiber of the projection $V(\sQ_I) \rightarrow \A^{|I|+2h_G}$
and obtain
$$[\sQ_I] = \Lef^{2h_G-1}(\Lef^{|I|+2h_G}-[N_{\bar{I}}\cdot p^+,N_{\bar{I}}\cdot p'^+]) + \Lef^{2h_G}[N_{\bar{I}}\cdot p^+,N_{\bar{I}}\cdot p'^+].$$
Together we obtain (\ref{11}).
\item
The equations $N_{\bar{I}}\cdot p^+,N_{\bar{I}}\cdot p'^+$ form two identical systems of $h_G$ linear equations in the variables $p^+$ and $p'^+$, respectively.
The vanishing locus of each system is $\A^n$ where $n=\corank(N_{\bar{I}})$. Hence
\begin{eqnarray*}
[N_{\bar{I}}\cdot p^+,N_{\bar{I}}\cdot p'^+]&\equiv&[\corank N_{\bar{I}}=0]+\Lef^2[\corank N_{\bar{I}}=1]\mod\Lef^4\\
&\equiv&\Lef^{|I|}-[\corank N_{\bar{I}}>0]\\
&&+\,\Lef^2([\corank N_{\bar{I}}>0]-[\corank N_{\bar{I}}>1])\mod\Lef^4
\end{eqnarray*}
Because $\corank N_{\bar{I}}>0\Leftrightarrow\Psi_{G,{\bar{I}}}=0$ we obtain (\ref{12}).
\end{enumerate}
\end{proof}
To progress further we pass to finite fields. Let $q=p^n$ be a prime power. Given polynomials $P_1,\ldots, P_{\ell}\in \ZZ[x_1,\ldots, x_N]$, let  
$$[P_1,\ldots, P_{\ell}]_q \in \mathbb{N}\cup \{0\}$$
denote the number of points on the affine variety $V(\overline{P_1},\ldots, \overline{P_{\ell}}) \subset \FF_q^N$, where $\overline{P}_i$ denotes the reduction of $P_i$ modulo $p$.
The point-counting is compatible with inclusion-exclusion and Cartesian products and therefore factors through the Grothendieck ring mapping $\Lef$ to $q$.

We are interested in the point-count of the zero locus  of the denominator of the  momentum space differential form which is $[Q_1Q_2\cdots Q_{N_G}]_q$.
\begin{prop}\label{prop2}
Let $h_G\geq2$, $2h_G\geq N_G$ and $E$ be  the edge-set of $G$, then
\begin{eqnarray}\label{14}
[Q_1Q_2\cdots Q_{N_G}]_q& \equiv&(-q)^{2h_G-N_G}\Big[[\Psi_G]_q+q^2[\Sing(X_G)]_q\nonumber\\
&&-\,q\sum_{e\in E}[\Psi_{G\q e}]_q+q^2\sum_{e_1,e_2\in E}[\Psi_{G\q e_1e_2}]_q\Big]\mod q^3.
\end{eqnarray}
\end{prop}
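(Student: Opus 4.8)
The plan is to pass from the point count of the \emph{union} $V(Q_1\cdots Q_{N_G})=\bigcup_i V(Q_i)$ to the \emph{intersection} counts $[Q_{i,i\in I}]_q$ that appear in Proposition~\ref{prop1}, and then to substitute (\ref{11}) and (\ref{12}). First I would apply inclusion--exclusion over the edge set $E$,
\[
[Q_1\cdots Q_{N_G}]_q=\sum_{\varnothing\neq I\subseteq E}(-1)^{|I|-1}[Q_{i,i\in I}]_q ,
\]
using that $[Q_{i,i\in I}]_q$ is by definition the number of points of $\bigcap_{i\in I}V(Q_i)$. Applying the point counting homomorphism $\Lef\mapsto q$ to (\ref{11}) and cancelling the integer factor $(q-1)q^{|I|-1}$ (legitimate since $q\geq2$ and $|I|\leq N_G\leq 2h_G$) yields
\[
[Q_{i,i\in I}]_q=q^{2h_G-|I|}\,[N_{\bar I}\cdot p^+,N_{\bar I}\cdot p'^+]_q ,
\]
where the exponent is a genuine nonnegative integer precisely because $2h_G\geq N_G$.

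Next I would insert (\ref{12}) modulo $q^4$, put $J=\bar I=E\setminus I$ so that $|I|=N_G-|J|$, and use the contraction--deletion relation (\ref{1a}) in the form $\Psi_{G,\bar I}=\Psi_G|_{x_j=0,\,j\in J}=\Psi_{G\q J}$. Each summand then reads, modulo $q^{2h_G-|I|+4}$,
\[
(-1)^{|I|-1}\Bigl(q^{2h_G}+(q^2-1)q^{2h_G-|I|}[\Psi_{G\q J}]_q-q^{2h_G-|I|+2}[\rank N_{\bar I}<h_G-1]_q\Bigr).
\]
Three things then happen. The constant terms collect to $q^{2h_G}\sum_{\varnothing\neq I}(-1)^{|I|-1}=q^{2h_G}$, which vanishes mod $q^3$ since $2h_G\geq4$. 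In the rank term the power $q^{2h_G-|I|+2}$ is divisible by $q^3$ unless $|I|=N_G=2h_G$, so only $I=E$ survives; there $N_{\bar I}=N$, and by (\ref{10}) the count is $[\Sing(X_G)]_q$, producing $q^2[\Sing(X_G)]_q$ when $2h_G=N_G$ and nothing otherwise. For the middle term I would re-index by $J$: using $(-1)^{N_G-1}q^{2h_G-N_G}=-(-q)^{2h_G-N_G}$ it becomes $(1-q^2)(-q)^{2h_G-N_G}\sum_{J\subsetneq E}(-q)^{|J|}[\Psi_{G\q J}]_q$, and the powers of $q$ restrict the surviving contributions to $|J|\leq2$, giving $(-q)^{2h_G-N_G}\bigl([\Psi_G]_q-q\sum_e[\Psi_{G\q e}]_q+q^2\sum_{e_1,e_2}[\Psi_{G\q e_1e_2}]_q\bigr)$.

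The main obstacle is to dispose of the one stray term the above leaves behind: expanding $(1-q^2)$ against the $J=\varnothing$ summand produces an extra $-q^2[\Psi_G]_q$ in the log-divergent case $2h_G=N_G$. The hard part is to see that this is harmless, and here the arithmetic input (\ref{c2modq2}) is essential: when $2h_G=N_G$ the graph has $v_G=h_G+1\geq3$ vertices, so $X_G=V(\Psi_G)$ carries a $c_2$ invariant and $[\Psi_G]_q\equiv0\bmod q^2$, whence $q^2[\Psi_G]_q\equiv0\bmod q^3$ and the stray term disappears. I would finish by checking the degenerate range forced by $h_G=2$ (where $N_G$ can be as small as $2$) to confirm that every discarded power of $q$ really is $\geq3$, and by matching the convention of the sum $\sum_{e_1,e_2}$ against the $|I|=N_G-2$ subsets, which range over the unordered pairs of distinct edges.
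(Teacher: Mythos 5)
Your proposal is correct and follows essentially the same route as the paper: inclusion--exclusion over the propagators, substitution of Proposition \ref{prop1} (2) and (3), and a power-of-$q$ bookkeeping in which only $|I|\geq N_G-2$ survives, the rank term contributes $q^2[\Sing(X_G)]_q$ via (\ref{10}) only when $I=E$ and $2h_G=N_G$, and the stray $q^2[\Psi_G]_q$ term is killed by (\ref{c2modq2}) using $h_G+2\leq N_G$. The only differences are cosmetic (your explicit re-indexing by $J=\bar I$ and the remark on cancelling $(q-1)q^{|I|-1}$).
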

\begin{proof}
By inclusion exclusion we obtain
$$[Q_1Q_2\cdots Q_{N_G}]_q=\sum_{\emptyset\neq I\subseteq\{1,\ldots,N_G\}}(-1)^{|I|-1}[Q_{i,i\in I}]_q.$$
By prop.\ \ref{prop1} (2) we have $[Q_{i,i\in I}]_q=q^{2h_G-|I|}[N_{\bar{I}}\cdot p^+,N_{\bar{I}}\cdot p'^+]_q$.
Next, we use prop.\ \ref{prop1} (3). For $2h_G\geq N_G$ the second term on the right hand side of (\ref{12}) survives mod $q^3$ only in the case $I=\{1,\ldots,N_G\}$
where it gives $-q^2[\Sing(X_G)]_q$ by prop.\ \ref{prop1} (1). The third term on the right hand side of (\ref{12}) is multiplied by $q^{2h_G-|I|}$ and vanishes
 mod $q^3$.
The first term on the right hand side of (\ref{12}) vanishes mod $q^3$ unless $|I|\geq N_G-2$ and in this case  gives $(q^2-1)[\Psi_{G,{\bar{I}}}]_q=(q^2-1)[\Psi_{G\q (\{1,\ldots,N_G\}-I)}]_q$ by eq.\ (\ref{1a}).
The term proportional to $q^2$ vanishes trivially for $|I|<N_G$ or $2h_G>N_G$. If $|I|=N_G$ and $2h_G=N_G$ then from $h_G\geq2$ it follows
that $h_G+2\leq N_G$. In this case we know from (\ref{c2modq2}) that $q^2|[\Psi_G]_q$ with the result  that the $q^2$-term vanishes mod $q^3$. Putting everything together proves (\ref{14}).
\end{proof}

The above proposition allows us to define the $c_2$ invariant in momentum space
\begin{propdef}\label{def1}
Let $G$ be a graph with $h_G\geq2$ independent cycles and $N_G\leq2h_G$ edges. Fix a cycle basis in $G$ and define the inverse propagators according to momentum space
Feynman rules with metric (\ref{8}). Then the momentum space $c_2$ invariant of $G$ is given as a map from $q$ to $\ZZ/q\ZZ$ by
\begin{equation}
c_2(G)^{\rm mom}_q \equiv[Q_1Q_2\cdots Q_{N_G}]_q/q^2\mod q.
\end{equation}
\end{propdef}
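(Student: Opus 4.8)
Since this is a proposition--definition, what has to be checked is that the scheme $V(Q_1\cdots Q_{N_G})$ has a $c_2$ invariant in the sense of Definition \ref{defncgeneral}, namely that $[Q_1Q_2\cdots Q_{N_G}]_q\equiv 0\mod q^2$ for every prime power $q$; the displayed formula is then nothing but the associated quotient $[Q_1\cdots Q_{N_G}]_q/q^2\mod q$. The whole argument rests on the closed expression (\ref{14}) of Proposition \ref{prop2}, so the plan is to show directly that its right-hand side is divisible by $q^2$. I would first record that the construction does not depend on the auxiliary choices: passing to a different cycle basis replaces the momenta by an invertible, unimodular linear substitution, under which $V(Q_1\cdots Q_{N_G})$ is carried to itself and the point counts $[Q_1\cdots Q_{N_G}]_q$ are unchanged. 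Hence it suffices to verify divisibility for one convenient routing.

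Write $d=2h_G-N_G\geq 0$. In (\ref{14}) the two summands carrying an explicit factor $q^2$, namely $q^2[\Sing(X_G)]_q$ and $q^2\sum_{e_1,e_2\in E}[\Psi_{G\q e_1e_2}]_q$, remain divisible by $q^2$ after multiplication by $(-q)^d$, and may be dropped modulo $q^2$. This reduces the claim to
$$(-q)^d\Big([\Psi_G]_q-q\sum_{e\in E}[\Psi_{G\q e}]_q\Big)\equiv 0\mod q^2.$$
For $d\geq 2$ the prefactor already supplies two powers of $q$ and there is nothing to prove, so the real content lies in the two borderline cases $d=1$ and $d=0$.

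The key input is a divisibility statement for point counts of graph polynomials. Since $\Psi_H$ is homogeneous of degree $h_H$ in its $N_H$ edge variables, the theorem of Ax refining Chevalley--Warning gives $q^{\mu}\mid[\Psi_H]_q$ with $\mu=\lceil N_H/h_H\rceil-1$. In the regime $N_H=2h_H-1$ with $h_H\geq 2$ this yields $\mu=1$, i.e.\ $q\mid[\Psi_H]_q$. When $d=1$ we have $N_G=2h_G-1$, hence $q\mid[\Psi_G]_q$ and the bracket above, multiplied by $-q$, is divisible by $q^2$. When $d=0$ we have $N_G=2h_G\geq h_G+2$ (using $h_G\geq 2$), so $G$ has at least three vertices and (\ref{c2modq2}) furnishes the stronger $q^2\mid[\Psi_G]_q$; moreover each contraction $G\q e$ satisfies $h_{G\q e}=h_G$ and $N_{G\q e}=2h_G-1$, so it falls into the same borderline regime and gives $q\mid[\Psi_{G\q e}]_q$ (a contracted self-loop contributes $\Psi=0$ and a count divisible by $q$ trivially). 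Thus both $[\Psi_G]_q$ and $q\sum_{e\in E}[\Psi_{G\q e}]_q$ vanish modulo $q^2$, settling this case.

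Assembling the three cases shows $[Q_1\cdots Q_{N_G}]_q\equiv 0\mod q^2$ for all $q$, so the $c_2$ invariant of $V(Q_1\cdots Q_{N_G})$ exists and the formula of the statement is well posed. I expect the main obstacle to be precisely the borderline divisibility $q\mid[\Psi_H]_q$ at $N_H=2h_H-1$: there $G\q e$ (or $G$ itself in the case $d=1$) may have only two vertices, so the strong estimate (\ref{c2modq2}) is unavailable, and a bare application of Chevalley--Warning would yield divisibility only by the characteristic $p$, not by $q$. It is the sharper Ax bound, which is exactly tight here, that closes this gap, and this also explains why the hypothesis $h_G\geq 2$ cannot be weakened.
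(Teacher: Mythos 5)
Your proof is correct and takes essentially the same approach as the paper: both rest on formula (\ref{14}) of Proposition \ref{prop2}, the case split on $d=2h_G-N_G$, the bound $q^2\mid[\Psi_G]_q$ from (\ref{c2modq2}) when $d=0$ (using $h_G\geq 2$ to get $N_G\geq h_G+2$), and the divisibility $q\mid[\Psi_H]_q$ in the borderline situations $N_H\geq h_H+1$. The only difference is one of justification rather than of route: you obtain that last divisibility from Ax's refinement of Chevalley--Warning applied to the degree-$h_H$ homogeneous polynomial $\Psi_H$ in $N_H$ variables, whereas the paper simply cites \cite{AM} for the same fact.
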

\begin{proof}
We use eq.\ (\ref{14}) to show that $[Q_1Q_2\cdots Q_{N_G}]$ is divisible by $q^2$.
If $2h_G=N_G$ then from $h_G\geq2$ we get $h_G+2\leq N_G$ with the result that  $q^2|[\Psi_G]_q$ by (\ref{c2modq2}). Moreover, we have
either $G\q e=0$ in the graph algebra or $h_{G\q e}+1\leq N_{G\q e}$. In any case $q|[\Psi_{G\q e}]_q$, see \cite{AM}.
If $2h_G=N_G+1$ then from $h_G\geq2$ we get $h_G+1\leq N_G$ and thus $q|[\Psi_{G}]_q$.
In all other cases $[Q_1Q_2\cdots Q_{N_G}]_q$ is trivially divisible by $q^2$.
\end{proof}
Note that the point-count  $[Q_1Q_2\cdots Q_{N_G}]_q$ is independent of the chosen cycle basis, since the change of cycle basis results in linear transformations
of the underlying coordinates.

\begin{thm}\label{thm1}
Let $G$ be a graph with $h_G\geq3$. If $2h_G>N_G$ then $c_2(G)^{\rm mom}_q \equiv0\mod q$. 

If $2h_G=N_G$ then the momentum space $c_2$ invariant equals
the $c_2$ invariant in parametric space modulo $q$,
\begin{equation}\label{15a}
c_2(G)^{\rm mom}_q \equiv c_2(G)_q\mod q.
\end{equation}
\end{thm}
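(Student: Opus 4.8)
The plan is to read everything off the mod $q^3$ formula (\ref{14}) of Proposition \ref{prop2} and then divide by $q^2$ as in Proposition-Definition \ref{def1}. Write $d=2h_G-N_G\geq 0$ and let $B_q$ denote the bracket on the right-hand side of (\ref{14}), so that $[Q_1\cdots Q_{N_G}]_q\equiv (-q)^d B_q \mod q^3$. The entire argument reduces to tracking the $q$-divisibility of the four point counts occurring in $B_q$, namely $[\Psi_G]_q$, $[\Sing(X_G)]_q$, $[\Psi_{G\q e}]_q$ and $[\Psi_{G\q e_1e_2}]_q$, and matching these against the prefactor $(-q)^d$. The only two divisibility inputs I will need are: $q\mid[\Psi_H]_q$ whenever $N_H\geq h_H+1$ (the elementary vanishing of \cite{AM}), and $q^2\mid[\Psi_H]_q$ whenever $N_H\geq h_H+2$, which is exactly (\ref{c2modq2}).

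For the first assertion ($d\geq 1$) I would split into three subcases. If $d\geq 3$ the prefactor $(-q)^d$ already kills everything mod $q^3$. If $d=2$ then $(-q)^d=q^2$, so only the leading term survives and $[Q_1\cdots Q_{N_G}]_q\equiv q^2[\Psi_G]_q\mod q^3$; since $N_G=2h_G-2\geq h_G+1$ for $h_G\geq 3$, we get $q\mid[\Psi_G]_q$ and the term dies. If $d=1$ the surviving terms are $-q[\Psi_G]_q+q^2\sum_e[\Psi_{G\q e}]_q$; here $N_G=2h_G-1\geq h_G+2$ forces $q^2\mid[\Psi_G]_q$ by (\ref{c2modq2}), while $N_{G\q e}=N_G-1\geq h_{G\q e}+1$ forces $q\mid[\Psi_{G\q e}]_q$, so both contributions vanish mod $q^3$ and $c_2(G)^{\rm mom}_q\equiv 0$.

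For the main equality ($d=0$) the prefactor is trivial, so I must show the three correction terms of $B_q$ each vanish mod $q^3$, which then leaves $[Q_1\cdots Q_{N_G}]_q\equiv[\Psi_G]_q\equiv c_2(G)_q\,q^2\mod q^3$ by (\ref{c2modq2}), i.e.\ precisely (\ref{15a}). The term $q^2\sum_{e_1,e_2}[\Psi_{G\q e_1e_2}]_q$ dies because $N_{G\q e_1e_2}=2h_G-2\geq h_{G\q e_1e_2}+1$ gives $q\mid[\Psi_{G\q e_1e_2}]_q$; the term $-q\sum_e[\Psi_{G\q e}]_q$ dies because $G\q e$ has $v_G-1=h_G\geq 3$ vertices and $N_{G\q e}=2h_G-1\geq h_{G\q e}+2$, so $q^2\mid[\Psi_{G\q e}]_q$ by (\ref{c2modq2}). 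The genuinely substantial input is the vanishing of $q^2[\Sing(X_G)]_q$ mod $q^3$, which requires $q\mid[\Sing(X_G)]_q$, i.e.\ that $\Sing(X_G)$ has a $c_1$ invariant. Since $2h_G=N_G$ with $h_G\geq 3$ gives $v_G=h_G+1\geq 3$, this is supplied by the intermediate theorem on the singular locus; this is the real obstacle, and everything else is bookkeeping with the two elementary divisibility facts. (Self-loop edges need a separate word: there $\Psi_{G\q e}=0$, so $[\Psi_{G\q e}]_q=q^{N_G-1}$, which is divisible by the required power of $q$ since $N_G-1\geq 5$, so the argument is unaffected.)
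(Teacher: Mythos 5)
Your proposal is correct and follows exactly the paper's own argument: a case split on $d=2h_G-N_G$ applied to the mod $q^3$ formula of Proposition \ref{prop2}, using the divisibility facts $q\mid[\Psi_H]_q$ for $N_H\geq h_H+1$ and $q^2\mid[\Psi_H]_q$ for $N_H\geq h_H+2$, with the $d=0$ case reducing to $[\Sing(X_G)]_q\equiv 0\bmod q$ from Theorem \ref{thmsing}. Your bookkeeping (including the self-loop remark) matches the paper's treatment, which disposes of that case via the convention $\Psi_{G\q e}=0$.
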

\begin{proof}
We again use eq.\ (\ref{14}). If we subtract $2h_G-N_G=d$ from $h_G\geq3$ we obtain $N_G\geq h_G+3-d$.

If $d\geq3$ the statement of the theorem follows trivially.

If $d=2$ then $N_G\geq h_G+1$, hence $q|[\Psi_G]_q$, see \cite{AM}, and the theorem follows.

If $d=1$ then $N_G\geq h_G+2$, hence $q^2|[\Psi_G]_q$ by (\ref{1}) and $N_{G\q e}=0$ or $N_{G\q e}\geq h_{G\q e}+1$, hence $q|[\Psi_{G\q e}]_q$. Again, the theorem follows.

If $d=0$ then $N_{G\q e}=0$ or $N_{G\q e}\geq h_{G\q e}+2$, hence $q^2|[\Psi_{G\q e}]_q$. Likewise $N_{G\q e_1e_2}=0$ or $N_{G\q e_1e_2}\geq h_{G\q e_1e_2}+1$, hence $q|[\Psi_{G\q e_1e_2}]_q$.
In this case we obtain
$$[Q_1Q_2\cdots Q_{N_G}]_q \equiv q^2(c_2(G)_q+[\Sing(X_G)]_q)\mod q^3.$$
The theorem follows from $[\Sing(X_G)]_q \equiv0\mod q$ for graphs with $N_G\geq h_G+2$ which we will prove in thm.\ \ref{thmsing}.
\end{proof}
Note that the residue $I_G$, see (\ref{2}), only exists in the case $2h_G=N_G$. Moreover, graphs with non-trivial residues have $h_G\geq3$.
In ex.\ \ref{ex1} we saw that we get a non-trivial $c_2(G)^{\rm mom}$ if $2h_G>N_G$ but $h_G<3$.

The $c_2$ invariant in parametric space does not in general vanish if $2h_G>N_G$.
We rather have $c_2(G)_q\equiv0\mod q$ if $2h_G<N_G$ and $N_G\geq4$, see \cite{BS}. For the sunset
graph which has $2h_G=N_G+1$ in ex. \ref{ex1} we obtain in parametric space $c_2(G)_q\equiv 1\not\equiv c^{\rm mom}_2(G)_q\equiv-3\mod q$.

It is possible to define a $c_2$ invariant in position space $c_2(G)^{\rm pos}_q$ and in dual parametric space $c_2(G)^{\rm dual}_q$.
If $2h_G=N_G$ both $c_2$ invariants can be shown to be equal mod $q$ by translating the methods of thm.\ \ref{thm1} to position space.
The equivalence of $c_2(G)^{\rm dual}_q$ and the $c_2$ invariant in parametric space is conjectured for graphs which have a residue in \cite{SchnetzFq}.
This has still  not been proved even though   dual parametric space and parametric space are only related by inversion of variables.

It is important to note that only in the case $2h_G=N_G$ (the case in which the residue exists) are all $c_2$ invariants  (conjecturally) equivalent. In this case
the information contained in  the  various $c_2$ invariants is carried by the graph itself rather than by any of the representations of the residue integral (\ref{2}).

\section{The singular locus of graph hypersurfaces}
Let $G$ be a connected graph with edge-set $E(G)$, and let $X_G$ denote its graph hypersurface.
By linearity of the graph polynomial, the partial derivatives satisfy
$${\partial\Psi_G \over \partial x_e}  = \Psi^e_G \qquad \hbox{ for }  e\in E(G)\ .$$
The singular locus of $X_G$ is the affine scheme  $\Sing(X_G) = V(\Psi^e_G,  e\in E(G)).$
Let $[\Sing( X_G)]$ denote its class in $K_0(\mathrm{Var}_k)$, for $k$ a field. We shall prove:
\begin{thm}\label{thmsing}
Let $G$ be a graph with at least 3 vertices. Then
$$[ \Sing (X_G)] \equiv 0 \mod \Lef\ .$$
\end{thm}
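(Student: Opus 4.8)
The plan is to reduce the statement to the corank description of the singular locus provided by Proposition \ref{prop1}(1) and then to exploit an auxiliary Grassmannian incidence variety. By Proposition \ref{prop1}(1) we have $\Sing(X_G)=\{x:\corank N(x)\geq 2\}$, where $x\mapsto N(x)$ is the \emph{linear} map $N_\bullet:\A^{N_G}\to \mathrm{Sym}_{h_G}$ sending the edge variables to the symmetric $h_G\times h_G$ graph matrix. I would introduce the incidence variety
\[
Z=\{(x,\Pi)\in \A^{N_G}\times \mathrm{Gr}(2,h_G): \Pi\subseteq \ker N(x)\}
\]
and compute its class $[Z]\in K_0(\mathrm{Var}_k)$ by projecting to each factor.

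First I would project to $\A^{N_G}$. The fibre over $x$ is the Grassmannian $\mathrm{Gr}(2,\corank N(x))$ of $2$-planes in $\ker N(x)$, which is empty unless $x\in\Sing(X_G)$ and otherwise has class $[\mathrm{Gr}(2,r)]$ with $r=\corank N(x)\geq 2$. Since each $[\mathrm{Gr}(2,r)]$ is a sum of Tate classes with a single constant term, $[\mathrm{Gr}(2,r)]\equiv 1\bmod\Lef$, so stratifying $\Sing(X_G)$ by corank gives $[Z]\equiv [\Sing(X_G)]\bmod\Lef$. It therefore suffices to prove $[Z]\equiv 0\bmod\Lef$.

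For this I would project instead to $\mathrm{Gr}(2,h_G)$. Because $N_\bullet$ is linear, the fibre over a plane $\Pi$ is the linear subspace $N_\bullet^{-1}(\{A:\Pi\subseteq\ker A\})\subseteq\A^{N_G}$, of some dimension $\delta(\Pi)$. Partitioning $\mathrm{Gr}(2,h_G)$ into the strata on which $\delta$ is constant exhibits $Z$ as a union of affine-space bundles, so $[Z]\in\Lef\cdot K_0(\mathrm{Var}_k)$ as soon as $\delta(\Pi)\geq 1$ for every $\Pi$. Hence the whole theorem reduces to the claim that for every $2$-plane $\Pi\subseteq k^{h_G}$ there is a \emph{nonzero} $x$ with $N(x)|_\Pi=0$, i.e.\ that the linear map $\psi_\Pi:k^{N_G}\to \Hom(\Pi,k^{h_G})$, $x\mapsto N(x)|_\Pi$, has nontrivial kernel.

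The key observation driving this claim is symmetry. Since each $N(x)$ is symmetric, the bilinear form $(v,w)\mapsto \langle N(x)v,w\rangle$ on $\Pi$ is symmetric, and this is one nontrivial linear condition on $\psi_\Pi(x)\in\Hom(\Pi,k^{h_G})$ which is valid in every characteristic. Thus $\rank\psi_\Pi\leq \dim\Hom(\Pi,k^{h_G})-1=2h_G-1$, and when $N_G\geq 2h_G$ this forces $\delta(\Pi)=N_G-\rank\psi_\Pi\geq 1$. This already settles the case $2h_G=N_G$ needed in the proof of Theorem \ref{thm1}, and more generally every graph with $N_G\geq 2h_G$ (equivalently $N_G\leq 2v_G-2$). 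The main obstacle is the opposite, very dense regime $N_G<2h_G$, where the symmetry bound no longer beats $N_G$ and there may exist exceptional planes $\Pi$ with $\delta(\Pi)=0$; controlling the locus of such planes is exactly the delicate symmetric-determinantal count, and it is here that the characteristic $2$ subtleties are likely to enter. I expect this to require supplementing the symmetry bound with the diagonal freedom of $N_\bullet$ coming from the private cycle variables of Proposition \ref{prop1}(1), in order to show that the bad planes themselves organise into a locus whose class is divisible by $\Lef$.
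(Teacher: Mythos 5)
Your reduction to the incidence variety $Z=\{(x,\Pi):\Pi\subseteq\ker N(x)\}$ is clean, and both projection computations are sound: $[\mathrm{Gr}(2,r)]\equiv 1\bmod \Lef$ gives $[Z]\equiv[\Sing(X_G)]\bmod\Lef$, and the symmetry of $N(x)$ really does force $\rank\psi_\Pi\leq 2h_G-1$ in every characteristic. But the argument only closes when $N_G\geq 2h_G$, and the theorem is asserted for every graph with at least $3$ vertices, i.e.\ for all $N_G\geq h_G+2$. In the intermediate range $h_G+2\leq N_G\leq 2h_G-1$ (which contains, e.g., $K_5$ with $N_G=10$, $h_G=6$) the bound $\delta(\Pi)\geq N_G-(2h_G-1)$ is vacuous, and since $N(x)=\sum_e x_e\,b_e b_e^T$ with the $b_e$ spanning $k^{h_G}$, one expects the $N_G$ tensors $(b_e|_\Pi)\otimes b_e$ to be linearly independent for generic $\Pi$, so that $\delta(\Pi)=0$ on a dense open stratum of the Grassmannian. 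Then $[Z]\equiv[\{\Pi:\delta(\Pi)=0\}]\bmod\Lef$, and there is no mechanism in your argument forcing that class to vanish modulo $\Lef$ --- indeed $[\mathrm{Gr}(2,h_G)]\equiv 1\bmod\Lef$, so the open stratum cannot be discarded. You flag this regime yourself, but the "diagonal freedom of the private cycle variables" you invoke is not developed into an argument, and it is precisely here that the content of the theorem lies. So as a proof of the statement as written, there is a genuine gap.

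What you do prove --- the case $N_G\geq 2h_G$ --- is enough for the one place the paper actually uses Theorem \ref{thmsing}, namely the $d=0$ case of Theorem \ref{thm1} where $N_G=2h_G$, and your argument there is genuinely different from and arguably slicker than the paper's. The paper does not use the corank description at all for this theorem: it proceeds by induction on the number of edges, eliminating the variable $x_1$ from the ideal $(\Psi,\Psi^1,\dots,\Psi^{N_G})$ via Proposition \ref{propsimulred}, killing the correction terms with the Dodgson-polynomial identities \eqref{id2} and Corollary \ref{corjacdual} (which place the resultants $[\Psi^i,\Psi^j]_1$ in the radical of $(\Psi^1,\Psi_1)$), and then invoking $[\Psi^1,\Psi_1]\equiv 0\bmod\Lef$ to get $[\Sing(X_G)]\equiv-[\Sing(X_{G\backslash 1})]\bmod\Lef$. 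Note that edge deletion increases $N-2h$, so that induction naturally drifts into the sparse regime where your method works and away from the dense one where it does not; if you want to complete your approach you would either need to control the locus $\{\Pi:\delta(\Pi)=0\}$ directly or splice in an edge-deletion induction of exactly the kind the paper uses.
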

In particular, $[ \Sing (X_G)]_q \equiv 0 \mod q\ $ for all prime powers $q$.
\begin{remark} We believe that $[ \Sing (X_G)] $ should be congruent to zero modulo  $\Lef^2 $ for all reasonable graphs. If so, then one can define the $c_2$ invariant of the singular locus, and one can ask if it is  related to the $c_2$ invariant of $X_G$.
\end{remark} 

\subsection{Preliminary identities}
The proof of theorem \ref{thmsing} requires some  elimination theory and some new identities between Dodgson polynomials.
For simplicity of notation, we drop the subscript $G$ throughout this section.

\begin{lem} Let $i,j,k$ denote any three distinct edges of $G$. Then
\begin{equation}\label{id2}
[\Psi^i, \Psi^j]_{k} = \Psi^{ij,ik}\Psi^{j,k} -  \Psi^{ij,jk}\Psi^{i,k} \ .
\end{equation}
\end{lem}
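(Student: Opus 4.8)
The plan is to obtain $[\Psi^i,\Psi^j]_k$ from the elementary linear algebra of the resultant form, using $\Psi^{ij}:=\Psi^{ij,ij}$ as an auxiliary third polynomial. Since $x_k$ occurs in $M_G$ only in the diagonal entry indexed by $k$, the contraction--deletion relation shows that each of $\Psi^i$, $\Psi^j$ and $\Psi^{ij}$ has degree $\le 1$ in $x_k$, i.e. is an element of the free rank-two module $R\oplus R\,x_k$ over $R=k[x_e:e\neq k]$. On this module the resultant $[f,g]_k$ is the determinant in the basis $\{1,x_k\}$, hence an alternating bilinear form, and any three elements of a rank-two module satisfy the Cramer dependence relation
\[
[\Psi^j,\Psi^{ij}]_k\,\Psi^i-[\Psi^i,\Psi^{ij}]_k\,\Psi^j+[\Psi^i,\Psi^j]_k\,\Psi^{ij}=0 .
\]
So I would solve this relation for the wanted resultant $[\Psi^i,\Psi^j]_k$.

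The two auxiliary resultants appearing here are direct instances of the first Dodgson identity. Taking $I=J=\{i\}$, $a=b=j$, $x=k$ gives $[\Psi^i,\Psi^{ij}]_k=\Psi^{ik,ij}\Psi^{ij,ik}$, and taking $I=J=\{j\}$, $a=b=i$, $x=k$ gives $[\Psi^j,\Psi^{ij}]_k=\Psi^{jk,ij}\Psi^{ij,jk}$. Substituting into the dependence relation yields the intermediate identity
\[
[\Psi^i,\Psi^j]_k\,\Psi^{ij}=\Psi^{ik,ij}\Psi^{ij,ik}\,\Psi^j-\Psi^{jk,ij}\Psi^{ij,jk}\,\Psi^i .
\]
Working in the universal (Kirchhoff-type) matrix, where the minors are indeterminate polynomials and $\Psi^{ij}$ is not identically zero, one may then cancel the factor $\Psi^{ij}$ in the integral domain of polynomials and specialise back to any given graph $G$.

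It therefore remains to show that the right-hand side above is divisible by $\Psi^{ij}$ with the claimed quotient, i.e. to establish the polynomial identity
\[
\Psi^{ij,ik}\bigl(\Psi^{ik,ij}\Psi^j-\Psi^{ij}\Psi^{j,k}\bigr)=\Psi^{ij,jk}\bigl(\Psi^{jk,ij}\Psi^i-\Psi^{ij}\Psi^{i,k}\bigr) .
\]
I would prove this remaining identity by a further application of the first Dodgson identity together with the Pl\"ucker relations, or, if no clean reduction presents itself, by expanding both sides into spanning forest polynomials via Proposition~\ref{exists} and matching terms. The main obstacle is twofold. First, this last cancellation identity is genuinely new: the two arguments of the original resultant differ in \emph{both} their row and column index sets, so neither of the standard Dodgson identities applies to $[\Psi^i,\Psi^j]_k$ directly, and the content of the lemma is precisely this mixed-minor relation. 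Second, all the polynomials $\Psi^{I,J}$ are only defined up to sign, so the whole argument must be carried out for a single fixed matrix $M_G$ with every intermediate identity tracked to its correct sign; the sign bookkeeping, rather than the algebra, is where the real work lies.
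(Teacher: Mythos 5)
Your first two steps are sound: the Cramer/Pl\"ucker dependence relation for three elements of a rank-two module is correct, and the two auxiliary resultants $[\Psi^i,\Psi^{ij}]_k=\Psi^{ik,ij}\Psi^{ij,ik}$ and $[\Psi^j,\Psi^{ij}]_k=\Psi^{jk,ij}\Psi^{ij,jk}$ are indeed instances of the first Dodgson identity. The problem is that the "remaining identity" you defer is not a smaller residual fact --- it carries the entire content of the lemma, and the reduction you have performed is circular. To see this, apply the Dodgson identity once more, this time inside the matrix with row and column $i$ deleted: it gives $\Psi^{ij,ik}\Psi^{ik,ij}=\Psi^{ij}\Psi^{ik}-\Psi^i\Psi^{ijk}$, and symmetrically $\Psi^{ij,jk}\Psi^{jk,ij}=\Psi^{ij}\Psi^{jk}-\Psi^j\Psi^{ijk}$. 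Substituting these into your intermediate identity, the $\Psi^i\Psi^j\Psi^{ijk}$ terms cancel and the left-hand side becomes $\Psi^{ij}\bigl(\Psi^{ik}\Psi^j-\Psi^{jk}\Psi^i\bigr)$, which by bilinearity of the resultant is just $\Psi^{ij}\,[\Psi^i,\Psi^j]_k$. So your "remaining identity" is literally the statement of the lemma multiplied by $\Psi^{ij}$: the Cramer relation has returned you to your starting point. Your first suggested route (more Dodgson) therefore goes nowhere, and the Pl\"ucker relations cannot apply because the index sets $\{i,j\}$ and $\{i,k\}$ share the letter $i$ (Pl\"ucker requires four, respectively six, distinct edges). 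That leaves only the brute-force spanning-forest expansion, which you have not carried out and which is where all the work would actually lie.

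The paper avoids this trap by never introducing $\Psi^{ij}$ as a third vector. Instead it starts from the Dodgson identity in the form $\Psi^i_k\Psi^k_i-\Psi^{ik}\Psi_{ik}=(\Psi^{i,k})^2=(\Psi^{ij,jk}x_j+\Psi^{i,k}_j)^2$, extracts the coefficient of $x_j$, antisymmetrizes in $i$ and $j$ to obtain the genuinely new mixed-minor relation $[\Psi^i_j,\Psi^j_i]_k=\Psi^{ij,ik}\Psi^{j,k}_i-\Psi^{ij,jk}\Psi^{i,k}_j$, and then assembles $[\Psi^i,\Psi^j]_k$ by expanding both arguments in $x_i$ and $x_j$. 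If you want to salvage your route, you must supply an independent proof of the cancellation identity (e.g.\ the spanning-forest computation), and you should also note that the division by $\Psi^{ij}$ needs the universal-matrix argument to be spelled out, since for an actual graph $\Psi^{ij}$ can vanish identically (for instance when $i$ and $j$ form a doubled edge).
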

\begin{proof}
First observe that from the Dodgson identity and linearity:
$$\Psi_k^i\Psi^k_i - \Psi^{ik}\Psi_{ik}= (\Psi^{i,k})^2= (\Psi^{ij,jk} x_j + \Psi^{i,k}_j)^2\ .$$
Taking the coefficient of $x_j$ on both sides of this expression gives:
$$\Psi_k^{ij}\Psi^k_{ij} - \Psi^{ijk}\Psi_{ijk}+\Psi_{jk}^i\Psi^{jk}_i - \Psi_j^{ik}\Psi^j_{ik}= 2\, \Psi^{ij,jk}  \Psi^{i,k}_j\ .$$
Subtract the same expression with $i,j$ interchanged:
$$2( \Psi_{jk}^i\Psi^{jk}_i - \Psi_j^{ik}\Psi^j_{ik}) = 2 \Psi^{ij,jk}  \Psi^{i,k}_j- 2\Psi^{ij,ik}  \Psi^{j,k}_i$$
Rewriting the left-hand side as a resultant gives
\begin{equation} \label{weirdres}
{[}\Psi_{j}^i,\Psi^{j}_i]_k =   \Psi^{ij,ik}  \Psi^{j,k}_i- \Psi^{ij,jk}  \Psi^{i,k}_j \ .
\end{equation} 
Now we wish to compute
$$[\Psi^i, \Psi^j]_{k} =[\Psi^{ij}x_j +\Psi^i_j , \Psi^{ij}x_i + \Psi_i^j]_{k} = [ \Psi^i_j , \Psi^{ij} ]_k\, x_i + [\Psi^{ij}, \Psi_i^j]_k \,  x_j + [\Psi^i_j ,  \Psi_i^j]_{k} $$
By the Dodgson identity and $(\ref{weirdres})$ this reduces to
$$[\Psi^i, \Psi^j]_{k} =  (\Psi^{ij,ik})^2\, x_i - (\Psi^{ij,jk})^2\,   x_j +  \Psi^{ij,ik}  \Psi^{j,k}_i- \Psi^{ij,jk}  \Psi^{i,k}_j  $$
which, after writing $\Psi^{j,k}=\Psi^{ij,ik} \, x_i +    \Psi^{j,k}_i$ and likewise $\Psi^{i,k}$, is equation $(\ref{id2})$.
\end{proof}

\begin{cor}
Let $I$  denote  the ideal in $\QQ[x_e, e\in E(G)]$ spanned by $\Psi^k$ and $\Psi_k$. Then
\begin{equation} \label{id3}
[\Psi^i, \Psi^j]_{k} \in \sqrt{I} \quad  \hbox{ for all } i,j\in E(G)\ .
\end{equation}
\end{cor}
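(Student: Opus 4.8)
The plan is to start from the Lemma just proved, which gives the explicit expansion
\begin{equation*}
[\Psi^i,\Psi^j]_k = \Psi^{ij,ik}\,\Psi^{j,k} - \Psi^{ij,jk}\,\Psi^{i,k}\ .
\end{equation*}
Since $\sqrt{I}$ is an ideal, and in particular is closed under multiplication by arbitrary polynomials, it suffices to prove that the two right-hand factors $\Psi^{i,k}$ and $\Psi^{j,k}$ already lie in $\sqrt{I}$; the coefficients $\Psi^{ij,ik}$ and $\Psi^{ij,jk}$ then come along for free. Thus I would reduce the whole statement to the single claim that $\Psi^{e,k}\in\sqrt{I}$ for every edge $e\neq k$.

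To prove this claim I would invoke the classical Desnanot--Jacobi (second-order Dodgson) identity for the matrix $M_G$,
\begin{equation*}
\Psi^{i,k}\,\Psi^{k,i} = \Psi^{i}\Psi^{k} - \Psi\,\Psi^{ik}\ ,
\end{equation*}
and rewrite its right-hand side using contraction--deletion $(\ref{1a})$, namely $\Psi = \Psi^k x_k + \Psi_k$. This exhibits the right-hand side as $\Psi^i\Psi^k - (\Psi^k x_k + \Psi_k)\,\Psi^{ik}$, which manifestly lies in $I=(\Psi^k,\Psi_k)$, so that $\Psi^{i,k}\Psi^{k,i}\in I$. To pass from this mixed product to a square I would use that the Dodgson polynomials of the graph matrix are symmetric up to sign, $\Psi^{i,k} = \pm\,\Psi^{k,i}$; this upgrades the membership to $(\Psi^{i,k})^2\in I$, i.e.\ $\Psi^{i,k}\in\sqrt{I}$, and likewise for $\Psi^{j,k}$. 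Each term in the Lemma's expansion then lies in $\sqrt{I}$, hence so does their difference.

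The step I expect to be the main obstacle is justifying the sign-symmetry $\Psi^{i,k}=\pm\Psi^{k,i}$, since $M_G$ itself is not symmetric. I would derive it from the block structure of $\widetilde M_G$: conjugation by the diagonal $\pm1$ matrix $D$ that negates the vertex block sends $\widetilde M_G$ to its transpose, $D\,\widetilde M_G\, D = \widetilde M_G^{T}$, and deleting a vertex row and column preserves this relation; comparing the corresponding minors then gives $\Psi^{i,k}=\pm\Psi^{k,i}$ edge by edge. Alternatively, one can phrase the argument geometrically: it suffices that $[\Psi^i,\Psi^j]_k$ vanish at every point of $V(I)$ over $\overline{\QQ}$, where $\Psi^k=\Psi_k=0$ forces $\Psi=0$ and hence $\Psi^{i,k}\Psi^{k,i}=0$, and the symmetry then forces $\Psi^{i,k}=0$. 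Finally I would dispose of the cases not covered by the Lemma: $i=j$ gives $[\Psi^i,\Psi^i]_k=0$, while $i=k$ (or $j=k$) makes $\Psi^k$ independent of $x_k$, so that $[\Psi^k,\Psi^j]_k$ is a multiple of $\Psi^k$ and lies in $I$ outright.
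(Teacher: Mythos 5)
Your proposal is correct and follows essentially the same route as the paper: the paper also applies the Dodgson identity in the form $(\Psi^{i,k})^2=[\Psi,\Psi^i]_k=\Psi^k\Psi^i_k-\Psi_k\Psi^{ik}\in I$ (which is exactly your Desnanot--Jacobi computation after substituting $\Psi=\Psi^k x_k+\Psi_k$), concludes $\Psi^{i,k},\Psi^{j,k}\in\sqrt{I}$, and then invokes the expansion $(\ref{id2})$. Your extra care with the sign-symmetry $\Psi^{i,k}=\pm\Psi^{k,i}$ and the degenerate cases $i=j$ and $i=k$ is sound but is treated as implicit in the paper's conventions.
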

\begin{proof}
 The Dodgson identity and linearity give
$$(\Psi^{i,k})^2 =  [\Psi_i, \Psi^i]_k =  [\Psi, \Psi^i]_k = \Psi^k \Psi_k^{i} - \Psi_k \Psi^{ik} \in I\ .$$
It follows that  $\Psi^{i,k},\Psi^{j,k}\in \sqrt{I}$.   By  $(\ref{id2})$ this  gives  $[\Psi^i, \Psi^j]_{k} \in \sqrt{I}$.
\end{proof}

We say that  a subgraph  $\gamma\subseteq G$ is a cycle  if $\gamma$ is a topological  circle, i.e.,  $h_{\gamma}=1$ and $h_{\gamma\backslash e}=0$ for all $e\in E(\gamma)$. 

\begin{lem}\label{help for PsiGlower1} Let $1,\ldots, k$ be a cycle in $G$; let the vertex between edges $i$ and $i+1$ be $v_i$ and let the vertex between edges $1$ and $k$ be $v_k$.  Then
\begin{equation}\label{PsiGlower1 in Phi}
\Phi^{\{v_1\},\{v_k\}}_{H} = \Phi^{\{v_1\},\{v_2v_k\}}_{H}  + \sum_{j=3}^{k-1} \left(\Phi^{\{v_1v_{j-1}\},\{v_jv_k\}}_H - \Phi^{\{v_1v_j\},\{v_{j-1}v_k\}}_H\right)+ \Phi^{\{v_k\},\{v_1v_{k-1}\}}_H
\end{equation}
where $H=G\backslash 1\cdots k$.
\end{lem}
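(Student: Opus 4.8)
The plan is to prove $(\ref{PsiGlower1 in Phi})$ combinatorially, by reading every term as a weighted sum over spanning $2$-forests of $H=G\backslash 1\cdots k$. First I would observe that in each partition appearing on the right the vertices $v_1$ and $v_k$ lie in \emph{different} blocks; consequently every spanning forest contributing to any term on either side is a spanning $2$-forest of $H$ that separates $v_1$ from $v_k$, which is exactly the family summed by the left-hand side $\Phi^{\{v_1\},\{v_k\}}_H$. Since a given forest $F$ is weighted by the same monomial $\prod_{e\notin F}x_e$ in every term, it suffices to establish a pointwise identity: for each such $F$, the total \emph{signed} multiplicity with which it occurs on the right equals $1$, matching its coefficient on the left.

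To organise the bookkeeping I would attach to a contributing $2$-forest $F$ a binary labelling of the cycle vertices: set $\chi_i=1$ if $v_i$ lies in the same tree as $v_1$ and $\chi_i=0$ if it lies in the tree of $v_k$, so that $\chi_1=1$ and $\chi_k=0$ automatically. Each term of $(\ref{PsiGlower1 in Phi})$ then becomes an indicator in the $\chi_i$: the opening term $\Phi^{\{v_1\},\{v_2 v_k\}}_H$ contributes $1-\chi_2$, the closing term $\Phi^{\{v_k\},\{v_1 v_{k-1}\}}_H$ contributes $\chi_{k-1}$, and the generic summand contributes
$$[\chi_{j-1}=1,\ \chi_j=0]-[\chi_j=1,\ \chi_{j-1}=0]=\chi_{j-1}-\chi_j,$$
the signed difference collapsing to a discrete derivative precisely because $v_1$ always carries colour $1$ and $v_k$ colour $0$.

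The conclusion is then a telescoping. The generic terms sum to $\sum_{j=3}^{k-1}(\chi_{j-1}-\chi_j)=\chi_2-\chi_{k-1}$, and adding the two boundary contributions $1-\chi_2$ and $\chi_{k-1}$ yields $1$ identically, independently of $F$. This equals the coefficient of $F$ on the left, so the two signed sums over forests coincide term by term, proving $(\ref{PsiGlower1 in Phi})$.

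The step requiring the most care is the translation of the signed generic summand into the clean expression $\chi_{j-1}-\chi_j$ and the check that the two boundary terms are exactly the leftover endpoints $1-\chi_2$ and $\chi_{k-1}$ of the telescope; this is what forces the particular asymmetric shape of $(\ref{PsiGlower1 in Phi})$, with the first and last terms not having the same form as the summand. A secondary point to confirm is that no forest with $v_1$ and $v_k$ in a common tree can appear on either side, which is immediate since every partition on the right separates them; this guarantees that the forest families on the two sides genuinely coincide rather than merely overlap.
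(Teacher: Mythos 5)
Your proof is correct, but it takes a genuinely different route from the paper's. The paper argues by induction on the length of the cycle: it merges the first three terms of the right-hand side using the refinement identity $\Phi^{\{v_1\},\{v_2v_k\}}_{H} = \Phi^{\{v_1v_3\},\{v_2v_k\}}_{H} + \Phi^{\{v_1\},\{v_2v_kv_3\}}_{H}$ (splitting according to which tree contains $v_3$), cancels against the $j=3$ summand, and recognises what remains as the same identity for an auxiliary graph $G'$ in which edges $2,3$ are replaced by a single new edge $\ell$; the cases $k=2,3$ are checked by hand. You instead argue directly, forest by forest: since every partition occurring in the identity has exactly two blocks separating $v_1$ from $v_k$, each term is a sum over spanning $2$-forests of $H$ separating $v_1$ from $v_k$, and the signed multiplicity of any fixed such forest on the right telescopes to $1$ via the indicators $1-\chi_2$, $\chi_{j-1}-\chi_j$, and $\chi_{k-1}$ (your case check that the signed pair of indicators collapses to $\chi_{j-1}-\chi_j$ is the only computation needed, and it is right). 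This is a clean, non-inductive alternative that also explains the asymmetric shape of the identity as the two ends of a discrete telescope; the paper's induction stays within formal spanning-forest manipulations but pays for it with the auxiliary-graph construction. One shared caveat: for $k=2$ the two boundary terms coincide (your telescope then returns $2$ rather than $1$), so that degenerate case needs the same separate reading that the paper dismisses as ``trivial''; for $k\geq 3$, which is all that is used, your argument is complete.
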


\begin{proof}
  The proof is by induction on the length of the cycle.

   Take $k\geq 4$.  Consider the first three terms of the right hand side of \eqref{PsiGlower1 in Phi}, 
   \begin{align*}
     & \Phi^{\{v_1\},\{v_2v_k\}}_{H} + \left(\Phi^{\{v_1v_{2}\},\{v_3v_k\}}_H - \Phi^{\{v_1v_3\},\{v_{2}v_k\}}_H\right) \\
     & = \Phi^{\{v_1v_3\},\{v_2v_k\}}_{H} + \Phi^{\{v_1\},\{v_2v_kv_3\}}_{H} + \left(\Phi^{\{v_1v_{2}\},\{v_3v_k\}}_H - \Phi^{\{v_1v_3\},\{v_{2}v_k\}}_H\right) \\
     & = \Phi^{\{v_1\},\{v_2v_kv_3\}}_{H} + \Phi^{\{v_1v_{2}\},\{v_3v_k\}}_H \\
     & = \Phi^{\{v_1\},\{v_kv_3\}}_{H}
   \end{align*}
   Thus the right hand side of \eqref{PsiGlower1 in Phi} equals
   \begin{equation}\label{rhs rearranged}
     \Phi^{\{v_1\},\{v_3v_k\}}_{H}  + \sum_{j=4}^{k-1} \left(\Phi^{\{v_1v_{j-1}\},\{v_jv_k\}}_H - \Phi^{\{v_1v_j\},\{v_{j-1}v_k\}}_H\right)+ \Phi^{\{v_k\},\{v_1v_{k-1}\}}_H
   \end{equation}
   which is the right hand side of \eqref{PsiGlower1 in Phi} for the lemma applied to a new graph $G'$ defined to be $G\backslash 2,3$ with a new edge $\ell$ joining vertices $v_1$ and $v_3$ along with the cycle $1,\ell,4,\ldots,k$.  Note that $H = G\backslash 1\cdots k = G'\backslash 1\ell 4 \cdots k$, and so inductively \eqref{rhs rearranged} is $\Phi^{\{v_1\},\{v_k\}}_{H}$.

   It remains to check the initial cases.
   $k=2$ is trivial.  Suppose $k=3$. Then, as desired,  
   \[
     \Phi^{\{v_1\},\{v_2v_3\}}_{H}+ \Phi^{\{v_3\},\{v_1v_2\}}_H = \Phi^{\{v_1\},\{v_3\}}_{H}
   \]

\end{proof}

\begin{prop} If $1,\ldots, k$ is a cycle in $G$ then
\begin{equation}\label{PsiGlower1} \Psi_1= \sum_{j=2}^k  \lambda_j x_j \Psi^{1,j}, \hbox{ where } \lambda_j =\pm 1
\end{equation}
\end{prop}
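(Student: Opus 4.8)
The plan is to sidestep spanning forests entirely and argue directly from the matrix representation $\Psi_G=\det M_G$. The key observation is that the edges $1,\ldots,k$ of a cycle satisfy a linear relation among the corresponding rows of the incidence matrix $\mathcal{E}_G$: choosing a coherent orientation of the cycle, there are signs $\epsilon_i=\pm1$ with $\sum_{i=1}^k \epsilon_i\,(\mathcal{E}_G)_{i,\cdot}=0$, this being the telescoping of $\sum_i \epsilon_i(v_{\mathrm{begin}(i)}-v_{\mathrm{end}(i)})$ around the cycle. This relation is the restriction of a vanishing vector, so it survives the deletion of the vertex row and column used to pass from $\widetilde{M}_G$ to $M_G$. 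Since the edge rows of $M_G$ have the form $R_i=(x_i e_i \mid (\mathcal{E}_G)_{i,\cdot})$, combining them gives $\sum_{i=1}^k \epsilon_i R_i = (\epsilon_1 x_1,\ldots,\epsilon_k x_k,0,\ldots,0\mid 0,\ldots,0)$, a row supported only on the edge-columns $1,\ldots,k$.

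First I would record that $\Psi_1=\det(M_G)\big|_{x_1=0}$, since $\Psi_G=\Psi^1 x_1+\Psi_1$ and substitution commutes with $\det$. Setting $x_1=0$ in the relation above produces the row $w=(0,\epsilon_2 x_2,\ldots,\epsilon_k x_k,0,\ldots\mid 0,\ldots)$. Next I would perform the determinant-preserving row operation that replaces the edge-$1$ row of $M_G|_{x_1=0}$ by $\epsilon_1 R_1+\sum_{i=2}^k \epsilon_i R_i=w$; adding multiples of other rows leaves the determinant unchanged while the factor $\epsilon_1$ scales it, so the new determinant equals $\epsilon_1\Psi_1$. Finally I would Laplace-expand along this new row, which is supported only on the columns of edges $2,\ldots,k$ (the edge-$1$ entry having vanished with $x_1$). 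Each surviving term contributes $\epsilon_j x_j$ times a signed cofactor whose complementary minor uses the unmodified rows $R_2,\ldots$ and is therefore $M_G$ with the edge-$1$ row and the edge-$j$ column deleted and $x_1=0$, i.e. $M_G(\{1\},\{j\})\big|_{x_1=0}$; its determinant is $\pm\Psi^{1,j}_G$. As $\Psi^{1,j}_G$ does not contain $x_1$, the substitution is harmless, and one reads off $\epsilon_1\Psi_1=\sum_{j=2}^k \epsilon_j(-1)^{1+c_j}x_j\,(\pm\Psi^{1,j}_G)$, which is the asserted identity with $\lambda_j=\pm1$.

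The only delicate point is the bookkeeping of signs, but because the statement asserts only $\lambda_j\in\{\pm1\}$, I would not track the cofactor signs $(-1)^{1+c_j}$ nor the conventions fixing the sign of $\Psi^{1,j}_G$ precisely; it suffices that each $\lambda_j$ is a product of factors $\pm1$. The genuine content is the cycle relation among incidence rows, and the main thing I expect to verify with care is exactly that this relation persists after deleting a vertex — which, being the restriction of a zero vector, is immediate. An alternative route, presumably the one prepared by the preceding material, is purely combinatorial: expand $\Psi_1=\Psi_{G\q 1}=\Phi^{\{v_1\},\{v_k\}}_{G\setminus 1}$ and each $x_j\Psi^{1,j}_G$ into spanning forest polynomials of $H=G\setminus 1\cdots k$ via Proposition \ref{exists}, and match the two sides using the telescoping identity of Lemma \ref{help for PsiGlower1}; there the principal obstacle is the combinatorial accounting of which path edges are cut, together with the induction on cycle length already carried out in that lemma.
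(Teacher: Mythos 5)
Your argument is correct, and it takes a genuinely different route from the paper. The paper proves \eqref{PsiGlower1} combinatorially: it first uses contraction--deletion to reduce to the coefficient of $x_2\cdots x_k$, then expands $\Psi_1$ and each $x_j\Psi^{1,j}$ into spanning forest polynomials of $G\backslash 1\cdots k$ via Proposition \ref{exists}, and matches the two sides with the telescoping identity of Lemma \ref{help for PsiGlower1} (itself proved by induction on the cycle length) --- exactly the ``alternative route'' you sketch at the end. Your proof instead works directly with $\Psi=\det M_G$: the coherently signed incidence rows of a cycle sum to zero, this relation survives the deletion of the chosen vertex column, and after setting $x_1=0$ the resulting row operation followed by Laplace expansion along the modified edge-$1$ row produces $\epsilon_1\Psi_1=\sum_{j=2}^k \epsilon_j(-1)^{1+j}x_j\det M_G(\{1\},\{j\})$, with the observation that $x_1$ occurs only in the deleted row so the specialization is vacuous on the minors. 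All steps check out, including the one point you flag (persistence of the kernel relation after deleting a vertex). What each approach buys: yours is shorter, avoids the auxiliary telescoping lemma entirely, and transfers verbatim to regular matroids by replacing the incidence matrix with a representation matrix --- in the same spirit as the remark following the proposition, which notes that the dual statement (for corollas) is proved by Jacobi's determinantal formula; the paper's route stays within its spanning-forest-polynomial toolkit, which is reused heavily in Sections 5 and 6, and makes visible exactly which forests contribute to each term. If you wanted explicit values of $\lambda_j$ (the paper also does not supply them), your method actually delivers them, namely $\lambda_j=\epsilon_1\epsilon_j(-1)^{1+j}$ up to the global sign conventions fixing $\Psi^{1,j}$.
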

\begin{proof}
First note that by the contraction-deletion properties for Dodgson polynomials, any terms of \eqref{PsiGlower1} which do not contain $x_i$, for $2 \leq i \leq k$,  also appear
in \eqref{PsiGlower1} for the graph $G\q i$ and all such terms appear in this way.  Furthermore, they appear with the same signs since contracting an edge corresponds to setting the corresponding variable to zero in the Dodgson polynomials.  Clearly, contracting elements of a cycle gives a smaller cycle and so inductively it suffices to prove
the result holds just for the coefficient of $x_2\cdots x_k$.

Labelling the vertices as in Lemma \ref{help for PsiGlower1} and
translating into spanning forest polynomials
\begin{align*}
\Psi_1 & = \Phi_{G\backslash 1}^{\{v_1\},\{v_k\}} = x_2\cdots x_k \Phi_{G\backslash 1\cdots k}^{\{v_1\},\{v_k\}} + \text{ terms lower in $x_2,\ldots,x_k$} \\
x_2\Psi^{1,2} & = x_2\Phi_{G\backslash 1,2}^{\{v_1\},\{v_2v_k\}} = x_2\cdots x_k \Phi_{G\backslash 1\cdots k}^{\{v_1\},\{v_2v_k\}} + \text{ terms lower in $x_2,\ldots, x_k$}  \\
x_{k}\Psi^{1,k} & = x_k\Phi_{G\backslash 1,k}^{\{v_k\},\{v_1v_{k-1}\}} = x_2\cdots x_k \Phi_{G\backslash 1\cdots k}^{\{v_k\},\{v_1v_{k-1}\}} + \text{ terms lower in $x_2,\ldots, x_k$}
\end{align*}
and for $3 \leq j \leq k-1$
\begin{align*}
x_{j}\Psi^{1,j} & = x_j\left(\Phi_{G\backslash 1,j}^{\{v_1v_{j-1}\},\{v_jv_k\}} - \Phi_{G\backslash 1,j}^{\{v_1v_{j}\},\{v_{j-1}v_k\}}\right) \\
& = x_2\cdots x_k \left(\Phi_{G\backslash 1\cdots k}^{\{v_1v_{j-1}\},\{v_jv_k\}} -\Phi_{G\backslash 1\cdots k}^{\{v_1v_{j}\},\{v_{j-1}v_k\}}\right)  + \text{ terms lower in $x_2,\ldots, x_k$}
\end{align*}
By choosing the $\lambda_j$ appropriately, the result now follows from lemma \ref{help for PsiGlower1}.
\end{proof}

\begin{remark} Equation $(\ref{PsiGlower1})$ is  essentially dual to  lemma 31 in \cite{BrFeyn},  which states that for a graph $H$ in which edges $1,\ldots, k$ form a corolla (i.e. the set of edges
which meet a vertex), then 
$$\Psi^1_H = \sum_{j=2}^k \lambda_j  \Psi_H^{1,j}\hbox{ where } \lambda_j = \pm 1\ .$$
The proof  uses the Jacobi determinental formula (lemma 28 of \cite{BrFeyn}), and is easily seen to hold  for cographic matroids also (the graph matrix
 defined in \S2.2 of \cite{BrFeyn} generalizes to regular matroids by replacing the incidence matrix with the representation matrix of the matroid).
If $G$ denotes the graph in the statement of the proposition, and $H$ is  the dual matroid, then  the graph polynomials are related by
$\Psi_H(x_e) = \Psi_G(x_e^{-1}) \prod_{e\in E(G)} x_e $.
\end{remark}

\begin{cor} \label{corjacdual} Let $G$ be a graph with edge-connectivity\footnote{The edge-connectivity is the minimum number of edge cuts that splits the graph.} $\geq2$.
Let $I$ be the ideal in $\QQ[x_e,e\in E(G)\backslash \{1\}]$ spanned by $ \Psi^{1},\Psi^{12},\ldots,  \Psi^{1 k} $. Then $\Psi_1 \in \sqrt{I}.$
\end{cor}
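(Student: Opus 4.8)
The plan is to combine Proposition \eqref{PsiGlower1} with the squaring identity already used in the proof of \eqref{id3}. Since $G$ has edge-connectivity $\geq 2$, edge $1$ is not a bridge and therefore lies on some cycle; after relabelling I may assume that $1,\ldots,k$ is a cycle, namely the cycle whose edges index the generators $\Psi^{1},\Psi^{12},\ldots,\Psi^{1k}$ of $I$. Proposition \eqref{PsiGlower1} then gives $\Psi_1=\sum_{j=2}^{k}\lambda_j x_j\Psi^{1,j}$ with $\lambda_j=\pm1$, so it suffices to prove that each off-diagonal Dodgson polynomial $\Psi^{1,j}$ lies in $\sqrt{I}$. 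The desired conclusion $\Psi_1\in\sqrt I$ is then immediate, since $\sqrt I$ is an ideal and hence closed under multiplication by $x_j$ and under sums.

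To place $\Psi^{1,j}$ in $\sqrt I$ I would reuse the identity appearing in the proof of the corollary giving \eqref{id3}, now taking the resultant in the variable $x_1$. Writing $\Psi=\Psi^1 x_1+\Psi_1$ and $\Psi^j=\Psi^{1j}x_1+\Psi^j_1$ as polynomials of degree one in $x_1$ (by the contraction--deletion relation \eqref{1a}), the Dodgson identity yields
\begin{equation}\label{squaring}
(\Psi^{1,j})^2=\pm[\Psi,\Psi^j]_{1}=\pm\big(\Psi^1\Psi^j_1-\Psi_1\Psi^{1j}\big).
\end{equation}
Both terms on the right-hand side lie in $I$: the first through its factor $\Psi^1$, which is a generator, and the second through its factor $\Psi^{1j}$, which is also a generator. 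Hence $(\Psi^{1,j})^2\in I$, and therefore $\Psi^{1,j}\in\sqrt I$, as needed.

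The only genuine subtlety — and the step I would flag as the main obstacle — is the apparent circularity in \eqref{squaring}, where the polynomial $\Psi_1$ we are trying to control reappears inside the term $\Psi_1\Psi^{1j}$. This is harmless precisely because that term already lies in $I$ by virtue of its factor $\Psi^{1j}$, with no control on $\Psi_1$ required. It is exactly for this reason that one must take the resultant in $x_1$ rather than in $x_j$: eliminating $x_j$ instead gives $(\Psi^{1,j})^2=\pm(\Psi^j\Psi^1_j-\Psi_j\Psi^{1j})$, whose term $\Psi^j\Psi^1_j$ need not lie in $I$. Finally I would verify that the relabelling matches the generators $\Psi^{1j}$ of $I$ bijectively with the edges $j=2,\ldots,k$ of the cycle, which is precisely the index range of the sum in Proposition \eqref{PsiGlower1}.
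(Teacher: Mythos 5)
Your proposal is correct and follows essentially the same route as the paper: the Dodgson identity gives $(\Psi^{1,j})^2=[\Psi,\Psi^j]_1=\Psi^1\Psi^j_1-\Psi_1\Psi^{1j}\in I$, hence $\Psi^{1,j}\in\sqrt I$, and then Proposition~(\ref{PsiGlower1}) applied to a cycle through edge $1$ (which exists by edge-connectivity $\geq 2$) finishes the argument. Your remarks on the harmless reappearance of $\Psi_1$ and on the choice of elimination variable are accurate but only make explicit what the paper leaves implicit.
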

\begin{proof} It follows from the Dodgson identity that
$$(\Psi^{1,j})^2 = [\Psi_j,\Psi^j]_1 =[\Psi,\Psi^j]_1 =\Psi^1\Psi_1^{j}- \Psi_1 \Psi^{1j}\in I \ ,$$
and so $\Psi^{1,j}\in \sqrt{I}$ for all $j\in E(G)$.
Since $G$ has edge-connectivity $\geq2$ it has a cycle containing edge $1$. Then equation  $(\ref{PsiGlower1})$ implies the result.
\end{proof}
\begin{cor} For any edge $e$ of $G$ as above, $X_{G\backslash e}\backslash (X_{G\backslash e}\cap X_{G\q e})$ is smooth.
\end{cor}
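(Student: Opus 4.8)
The plan is to pin down the singular locus of $X_{G\backslash e}$ explicitly and show it is contained in $X_{G\q e}$, so that outside their intersection only smooth points of $X_{G\backslash e}$ survive. First I would observe that the contraction-deletion formula \eqref{1a} gives $\Psi^e_G=\Psi_{G\backslash e}$, so $X_{G\backslash e}$ is the hypersurface $V(\Psi^e_G)$ in the affine space on the variables $x_f$ with $f\neq e$ (neither $\Psi_{G\backslash e}$ nor $\Psi_{G\q e}$ involves $x_e$). A point of $X_{G\backslash e}$ is then non-smooth precisely when every partial derivative $\partial\Psi^e_G/\partial x_f$ vanishes there. Applying the contraction-deletion relation for Dodgson polynomials, $\partial\Psi^e_G/\partial x_f=\Psi^f_{G\backslash e}=\pm\Psi^{ef}_G$, so I get $\Sing(X_{G\backslash e})=V(\Psi^e_G,\ \Psi^{ef}_G:f\in E(G)\setminus\{e\})$.

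The second step is to feed this into Corollary \ref{corjacdual}. Since $G$ has edge-connectivity $\geq 2$, the edge $e$ is not a bridge and hence lies on a cycle; relabelling that edge as $1$, the corollary (which rests on equation \eqref{PsiGlower1}) yields $\Psi_e=\Psi_{G\q e}\in\sqrt{I}$, where $I$ is generated by $\Psi^e_G$ together with the $\Psi^{ef}_G$ for $f$ running over the cycle edges. Because $I$ is contained in the ideal cutting out $\Sing(X_{G\backslash e})$, I deduce $\Psi_{G\q e}\in\sqrt{(\Psi^e_G,\ \Psi^{ef}_G:f\neq e)}$, i.e.\ $\Sing(X_{G\backslash e})\subseteq V(\Psi_{G\q e})=X_{G\q e}$.

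Finally I would assemble the pieces: every singular point of $X_{G\backslash e}$ lies on $X_{G\q e}$, hence inside $X_{G\backslash e}\cap X_{G\q e}$, so the complement $X_{G\backslash e}\setminus(X_{G\backslash e}\cap X_{G\q e})$ contains no singular points and is smooth.

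The substantive input is the containment $\Psi_{G\q e}\in\sqrt{I}$ supplied by Corollary \ref{corjacdual}; everything else is bookkeeping with the Jacobian criterion and Dodgson contraction-deletion, so the main (minor) care-point is simply to apply the Jacobian criterion correctly—namely, at a point where some $\Psi^{ef}_G$ is nonzero, a single nonvanishing partial derivative of $\Psi_{G\backslash e}$ furnishes a local equation with nonzero differential, giving smoothness of the expected dimension regardless of whether $\Psi_{G\backslash e}$ is globally reduced.
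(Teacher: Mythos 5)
Your proof is correct and is precisely the argument the paper intends (the corollary is stated without proof as an immediate consequence of Corollary \ref{corjacdual}): you identify $\Sing(X_{G\backslash e})$ with $V(\Psi^e_G,\Psi^{ef}_G)$ via contraction--deletion and then use the radical membership $\Psi_{G\q e}\in\sqrt{I}$ to place it inside $X_{G\q e}$. The only point worth noting is that edge-connectivity $\geq 2$ is exactly what guarantees $e$ lies on a cycle so that Corollary \ref{corjacdual} applies, which you correctly observe.
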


\subsection{Elimination of a variable}
The following lemma is a straightforward consequence of inclusion-exclusion.
\begin{lem}
Let $f_i$, $i\in I$, $h$, $g_j$, $j\in J$ be polynomials with index sets $I$ and $J$. Then
\begin{equation}\label{inex}
[f_i,hg_j]=[f_i,h]+[f_i,g_j]-[f_i,h,g_j]
\end{equation}
where $i$ and $j$ run through $I$ and $J$, respectively.
\end{lem}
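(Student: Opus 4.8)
The plan is to reduce the claimed identity to the elementary two-set inclusion--exclusion formula for cardinalities, after first rewriting the relevant common zero locus as a union of two pieces. Throughout I read each bracket as the point count over $\FF_q$ of the common vanishing locus of the listed polynomials, exactly as defined before the statement (suppressing the subscript $q$ as the lemma does). Set $A = V(\overline{f_i},\, i\in I)$ for the locus on which all the $f_i$ vanish, $B = V(\overline{h})$ for the locus on which $h$ vanishes, and $C = V(\overline{g_j},\, j\in J)$ for the locus on which all the $g_j$ vanish. With this notation the three brackets on the right-hand side of \eqref{inex} are precisely $[f_i,h] = |A\cap B|$, $[f_i,g_j] = |A\cap C|$, and $[f_i,h,g_j] = |A\cap B\cap C|$, so it only remains to express $[f_i,hg_j]$ in terms of $A$, $B$, $C$.

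The key step is the set-theoretic identity
\[
V(\overline{f_i},\, \overline{hg_j}) = A\cap(B\cup C) = (A\cap B)\cup(A\cap C),
\]
which I would verify by a pointwise case analysis. A point $x$ lies in the left-hand locus exactly when $x\in A$ and $h(x)g_j(x)=0$ for every $j\in J$. If $h(x)=0$ then every product $h(x)g_j(x)$ vanishes automatically and $x\in A\cap B$; if $h(x)\neq 0$ then $h(x)g_j(x)=0$ forces $g_j(x)=0$ for each $j$ simultaneously, so $x\in A\cap C$. The reverse inclusions are immediate, since either $h(x)=0$ or $g_j(x)=0$ annihilates the product $hg_j$. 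The second equality is then just distributivity of intersection over union.

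Finally I would apply the standard counting identity $|U\cup W| = |U| + |W| - |U\cap W|$ with $U = A\cap B$ and $W = A\cap C$, noting $U\cap W = A\cap B\cap C$, to obtain
\[
[f_i,hg_j] = |A\cap B| + |A\cap C| - |A\cap B\cap C| = [f_i,h] + [f_i,g_j] - [f_i,h,g_j],
\]
which is \eqref{inex}; this last step is legitimate because point counting over $\FF_q$ is compatible with inclusion--exclusion, as already remarked before the statement. There is no genuine obstacle here: the only point requiring care is the pointwise decomposition of $V(\overline{hg_j},\, j\in J)$ inside $A$, and in particular the observation that where $h$ does not vanish the system $hg_j=0$ collapses to $g_j=0$ for all $j$ at once. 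Everything else is routine.
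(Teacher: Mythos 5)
Your proof is correct and is essentially the paper's argument: the paper likewise splits $V(f_i,hg_j)$ according to whether $h$ vanishes, computing the contribution $[f_i,h]$ on $V(h)$ and $[f_i,g_j]-[f_i,h,g_j]$ on its open complement, which is exactly the disjointification underlying your inclusion--exclusion step. The one thing to adjust is that in the paper's conventions the unsubscripted bracket denotes a class in the Grothendieck ring $K_0(\mathrm{Var}_k)$ rather than an $\FF_q$-point count (this is how the lemma is subsequently applied, e.g.\ in the proof of Lemma \ref{lemsingform}); your decomposition $V(\overline{f_i},\overline{hg_j})=(A\cap B)\cup(A\cap C)$ and the identity $[U\cup W]=[U]+[W]-[U\cap W]$ for closed subvarieties hold verbatim in the Grothendieck ring, so the argument upgrades without change.
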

\begin{proof}
Let $V(h)$ be the zero locus of $h$. Intersection with $V(h)$ gives $[f_i,hg_j,h]=[f_i,h]$. On the open complement $U$ of $V(h)$ we have
$$[V(f_i,hg_j)\cap U]=[V(f_i,g_j)\cap U]=[f_i,g_j]-[f_i,h,g_j].$$
Together we obtain (\ref{inex}).
\end{proof}

The next identity expresses the simultaneous elimination of a variable from the class of an ideal in the Grothendieck ring whose generators are all linear in that variable.
It generalizes lemma 3.3 in \cite{Stem} (or lemma 16 in \cite{BS}) to more than two generators.
\begin{prop}\label{propsimulred}
Let $f_1,\ldots, f_n$ denote polynomials which are linear in a variable $x$, and write $f_i=f^x_i x+ f_{ix}$ for $1\leq i\leq n$. Then $(\sum_1^{-1}=\sum_1^0=0)$
\begin{eqnarray}\label{15}
[f_1,\ldots, f_n]&=& [f^x_1,f_{1x},\ldots, f^x_n,f_{nx}] \, \Lef\nonumber\\
&&+\,[[f_1,f_2]_x,\ldots, [f_1,f_n]_x] -[f_1^x,\ldots, f_n^x]  \nonumber  \\
 &&+\,\sum_{k=1}^{n-2} \big( [f_1^x,f_{1x},\ldots, f_k^x,f_{kx}, [f_{k+1},f_{k+2}]_x,\ldots, [f_{k+1},f_n]_x] \\
 && \qquad\qquad\qquad  -\,[f^x_1,f_{1x},\ldots, f^x_k,f_{kx}]  \big) \ .\nonumber 
\end{eqnarray}
\end{prop}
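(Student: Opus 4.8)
The plan is to compute $[V(f_1,\ldots,f_n)]$ by fibering the affine scheme $V(f_1,\ldots,f_n)$ over the affine space $B$ whose coordinates are the variables other than $x$, via the projection $\pi$ that forgets $x$. Over a point $b\in B$ the system $f_i(b,x)=f_i^x(b)\,x+f_{ix}(b)=0$ is a system of $n$ affine-linear equations in the single variable $x$, so the fibre $\pi^{-1}(b)$ is either all of $\A^1$, a single point, or empty. It is all of $\A^1$ exactly when every $f_i^x(b)$ and every $f_{ix}(b)$ vanishes; this locus is $V(f_1^x,f_{1x},\ldots,f_n^x,f_{nx})$ and contributes $[f_1^x,f_{1x},\ldots,f_n^x,f_{nx}]\,\Lef$ to $[V(f_1,\ldots,f_n)]$, which is the first term of $(\ref{15})$. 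It remains to count the locus $P\subseteq B$ over which the fibre is a single point, i.e. where some $f_i^x(b)\neq0$ and the system is consistent.

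First I would stratify $P$ according to the smallest index $m=k+1$ with $f_m^x(b)\neq0$, for $k=0,\ldots,n-1$. On such a stratum the first $k$ equations reduce to $f_{ix}=0$ for $1\le i\le k$, the variable is forced to be $x=-f_{k+1,x}/f_{k+1}^x$, and consistency of the remaining equations is equivalent, since $f_{k+1}^x\neq0$, to the vanishing of the resultants $[f_{k+1},f_j]_x$ for $j=k+2,\ldots,n$. Writing $A_k=V(f_1^x,f_{1x},\ldots,f_k^x,f_{kx},[f_{k+1},f_{k+2}]_x,\ldots,[f_{k+1},f_n]_x)$, the stratum is $A_k\cap\{f_{k+1}^x\neq0\}$, so that its class is $[A_k]-[A_k\cap\{f_{k+1}^x=0\}]$.

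The heart of the computation is evaluating $[A_k\cap\{f_{k+1}^x=0\}]$. On the closed set $\{f_{k+1}^x=0\}$ one has the pointwise identity $[f_{k+1},f_j]_x=-f_{k+1,x}f_j^x$, so this intersection is cut out by $f_1^x,\ldots,f_{k+1}^x$, by $f_{1x},\ldots,f_{kx}$, and by the products $f_{k+1,x}f_j^x$ for $j=k+2,\ldots,n$. Applying the inclusion–exclusion identity $(\ref{inex})$ with $h=f_{k+1,x}$ and $g_j=f_j^x$ breaks each product and yields
\[
[A_k\cap\{f_{k+1}^x=0\}]=e_{k+1}+g_k-g_{k+1},
\]
where I abbreviate $e_i=[f_1^x,f_{1x},\ldots,f_i^x,f_{ix}]$ and $g_i=[f_1^x,\ldots,f_n^x,f_{1x},\ldots,f_{ix}]$; the three summands arise respectively from the term containing $h$, the term containing all the $g_j$, and the subtracted term containing both.

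Finally I would sum $[A_k]-[A_k\cap\{f_{k+1}^x=0\}]$ over $k=0,\ldots,n-1$ and telescope. The $g_i$ contributions collapse to $g_0-g_n$, while $\sum_{k=0}^{n-1}e_{k+1}=\sum_{i=1}^{n}e_i$; using the two boundary identities $e_n=g_n$ (the locus where all leading and all constant coefficients vanish) and $[A_{n-1}]=e_{n-1}$ (since for $k=n-1$ there are no resultant generators), the endpoint terms cancel and one is left with $[A_0]-g_0+\sum_{k=1}^{n-2}\big([A_k]-e_k\big)$, which is precisely $[[f_1,f_2]_x,\ldots,[f_1,f_n]_x]-[f_1^x,\ldots,f_n^x]$ together with the stated sum. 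Adding back $e_n\Lef$ reproduces all of $(\ref{15})$. I expect the main obstacle to be purely organizational: keeping the bookkeeping of the many point-counts consistent through the inclusion–exclusion step and verifying that the telescoping, together with the boundary cancellations at $k=0$ and $k=n-1$, produces exactly the index range $\sum_{k=1}^{n-2}$ rather than any genuine geometric difficulty.
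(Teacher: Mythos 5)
Your proof is correct and rests on the same ideas as the paper's: fibering over the projection that forgets $x$, stratifying by the first index whose leading coefficient $f_{k+1}^x$ is nonzero, rewriting the resultants as products $-f_{k+1,x}f_j^x$ on $\{f_{k+1}^x=0\}$, and breaking those products with the inclusion--exclusion identity $(\ref{inex})$. The only difference is organizational --- you carry out the full stratification at once and telescope, whereas the paper peels off one polynomial at a time and runs an induction on $n$ --- and your bookkeeping (including the boundary identities $e_n=g_n$ and $[A_{n-1}]=e_{n-1}$) checks out.
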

\begin{proof}
We prove by induction a slightly generalized version of (\ref{15}) where we add a set of $x$-independent polynomials $g=g_1,\ldots,g_m$ to all ideals.
We consider $[X]=[g,f_1^xx+f_{1x},f_2,\ldots,f_n]$ with ambient space $\A^N$. On the zero locus $V(f_1^x)\subset\A^N$ of $f_1^x$ we have
$$[X,f_1^x]=[g,f_1^x,f_{1x},f_2,\ldots,f_n].$$
Let $U$ denote the open complement of $V(f_1^x)$ in $\A^N$. On $U$ the projection
$$V(X)\rightarrow  V(g,[f_1,f_2]_x,\ldots,[f_1,f_n]_x)\subset\A^{N-1}$$
is one to one. We hence have
$$[V(X)\cap U]=[g,[f_1,f_2]_x,\ldots,[f_1,f_n]_x]-[g,f_1^x,[f_1,f_2]_x,\ldots,[f_1,f_n]_x].$$
By the definition of the resultant we have
$$[g,f_1^x,[f_1,f_2]_x,\ldots,[f_1,f_n]_x]=[g,f_1^x,f_{1x}f_2^x,\ldots,f_{1x}f_n^x].$$
Equation (\ref{inex}) gives for the right hand side
$$[g,f_1^x,f_{1x}]+[g,f_1^x,f_2^x,\ldots,f_n^x]-[g,f_1^x,f_{1x},f_2^x,\ldots,f_n^x].$$
Putting these identities together we arrive at the formula
\begin{eqnarray}\label{16}
[X]&=&[g,f_1^x,f_{1x},f_2,\ldots,f_n]+[g,[f_1,f_2]_x,\ldots,[f_1,f_n]_x]\nonumber\\
&&-\,[g,f_1^x,f_{1x}]-[g,f_1^x,f_2^x,\ldots,f_n^x]+[g,f_1^x,f_{1x},f_2^x,\ldots,f_n^x].
\end{eqnarray}
For $n=1$ this reduces to
$$[g,f_1]=[g,f_1^x,f_{1x}]+[g]-[g,f_1^x].$$
The first term on the right hand side defines  a trivial $\A^1$ fibration  over $V(g,f_1^x,f_{1x})$ $\subset\A^{N-1}$.
Changing the ambient space for the first term to $\A^{N-1}$ we get a factor of $\Lef$ and the above equation establishes the initial case $n=1$.
To complete the induction over $n$ we can assume that the hypothesis holds for the first term on the right hand side of (\ref{16}) with $x$-independent polynomials
$g,f_1^x,f_{1x}$, yielding $(\sum_2^0=\sum_2^1=0)$
\begin{eqnarray}\label{17}
[g,f_1^x,f_{1x},f_2,\ldots,f_n]&=&[g,f_1^x,f_{1x},f_2^x,f_{2x},\ldots,f_n^x,f_{nx}]\,\Lef\nonumber\\
&&\hspace{-1.5cm}+\,[g,f_1^x,f_{1x},[f_2,f_3]_x,\ldots,[f_2,f_n]_x]-[g,f_1^x,f_{1x},f_2^x,\ldots,f_n^x]\nonumber\\
&&\hspace{-1.5cm}+\,\sum_{k=2}^{n-2}\big([g,f_1^x,f_{1x},\ldots,f_k^x,f_{kx},[f_{k+1},f_{k+2}]_x,\ldots,[f_{k+1},f_n]_x]\nonumber\\
&&-\,[g,f_1^x,f_{1x},\ldots,f_k^x,f_{kx}]\big).
\end{eqnarray}
The third term on the right hand side of (\ref{17}) cancels the last term on the right hand side of (\ref{16}) whereas
the second term on the right hand side of (\ref{17}) joins with the third term on the right hand side of (\ref{16}) to form the $k=1$ term in the
sum of (\ref{17}). Together with the remaining terms this completes the induction.
\end{proof}
Note that the left hand side of (\ref{15}) is symmetric under changing the order of the polynomials $f_i$ whereas the individual terms on the right hand side are not.
\subsection{Proof of theorem \ref{thmsing}} 

\begin{lem} \label{lemsingform} Let $G$ have edge-connectivity $\geq2$ with edges numbered $1,\ldots,N_G$. Then
\begin{equation}\label{id1}
[\Sing (X_G)] +[\Sing (X_{G\backslash 1})] = \Lef\, [\Psi^1,\Psi_1,\{ \Psi^{1n},\Psi^{n}_1\}_{n=2,\ldots,N_G}]  + [\Psi^1,\Psi_1]   \ .
\end{equation}
\end{lem}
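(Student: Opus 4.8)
The plan is to compute $[\Sing(X_G)]$ by eliminating the variable $x_1$ from the defining ideal, using the contraction--deletion structure of the graph polynomials and the simultaneous-elimination formula of Proposition \ref{propsimulred}. The singular locus is $V(\Psi^e, e\in E(G))$, and each generator $\Psi^e$ is linear in $x_1$; writing $\Psi^e = \Psi^{1e} x_1 + \Psi^e_1$ for $e\neq 1$, while $\Psi^1 = \Psi_1^1 = \partial_{x_1}\Psi$ is already independent of $x_1$ (so $f^x_1 = 0$, $f_{1x}=\Psi^1$). The key observation is that the resultants appearing in Proposition \ref{propsimulred} are controlled: by the Dodgson identities, $[\Psi^1,\Psi^e]_{1}$ and the pairwise resultants $[\Psi^e,\Psi^f]_1$ reduce to products of lower Dodgson polynomials, and the ``leading coefficient'' ideal $(f^x_e) = (\Psi^{1e})_{e\neq 1}$ together with $(\Psi^1)$ cuts out exactly the relevant loci.

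First I would set up the elimination carefully. Applying the $x_1$-elimination to the ideal $(\Psi^1, \{\Psi^e\}_{e\neq 1})$, the $\Lef$-term of Proposition \ref{propsimulred} produces the class $[\Psi^1, \{\Psi^{1e}, \Psi^e_1\}_{e\neq 1}]$ multiplied by $\Lef$, which is precisely the first term on the right-hand side of \eqref{id1}. The remaining task is to show that all the correction terms collapse to $[\Psi^1,\Psi_1]$. For this I would use that the pairwise resultants $[\Psi^e, \Psi^f]_1$ lie in $\sqrt{I}$ by the Corollary to \eqref{id2} (equation \eqref{id3}), so on the point-count / Grothendieck level the varieties they define coincide with those defined by $\Psi^1$ and $\Psi_1$ together with the $\Psi^{1e}$. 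The crucial input is Corollary \ref{corjacdual}: since $G$ has edge-connectivity $\geq 2$, edge $1$ lies on a cycle, and $\Psi_1 \in \sqrt{(\Psi^1, \Psi^{12},\ldots,\Psi^{1k})}$. This lets me identify the locus $V(\Psi^1, \{\Psi^{1e}\}_e)$ with $V(\Psi^1,\Psi_1,\{\Psi^{1e}\}_e)$, feeding the $\Psi_1$ into the formula.

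Next I would identify $[\Sing(X_{G\backslash 1})]$ with the terms carrying the deleted edge. Under deletion of edge $1$, the graph polynomial of $G\backslash 1$ is $\Psi_{G\backslash 1}=\Psi^1_G$, and its singular locus is $V(\partial_{x_e}\Psi^1_G, e\neq 1) = V(\Psi^{1e},\ e\neq 1)$ together with the compatibility that $\Sing(X_{G\backslash 1})$ sits inside the relevant face. The strategy is to recognize that the collection of correction terms from Proposition \ref{propsimulred}, once the resultants are rewritten via the Dodgson identity \eqref{id2} as $[\Psi^i,\Psi^j]_1 = \Psi^{1i,1j}\Psi^{j,1}-\Psi^{1j,1i}\Psi^{i,1}$ and reduced modulo $\sqrt I$, telescopes so that the sum over $k$ reassembles into exactly $[\Sing(X_{G\backslash1})]$ plus the residual $[\Psi^1,\Psi_1]$. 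Bringing $[\Sing(X_{G\backslash 1})]$ to the left-hand side then yields \eqref{id1}.

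The main obstacle I anticipate is the bookkeeping in the telescoping sum: Proposition \ref{propsimulred} is manifestly non-symmetric in the ordering of the generators, whereas both $[\Sing(X_G)]$ and the target identity are symmetric, so I must choose the order (placing $\Psi^1$ first) so that the ``leading term'' $f^x_1$ vanishes and the mixed terms align with the deletion geometry. Verifying that the radical-ideal reductions (using \eqref{id3} and Corollary \ref{corjacdual}) are valid \emph{as identities of classes in $K_0(\mathrm{Var}_k)$}, and not merely set-theoretically, is the delicate point; I expect to handle it by observing that the loci in question differ only by the common vanishing of $\Psi^1$ and the $\Psi^{1e}$, on which the square-root relations become genuine equalities of reduced subschemes, so that point-counting (and hence the class, by the stratifications used in the preceding propositions) is unaffected. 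The rest is a matching of terms that I would carry out explicitly but expect to be routine once the ordering is fixed.
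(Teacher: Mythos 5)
Your overall strategy---eliminate $x_1$ via Proposition \ref{propsimulred} and collapse the correction terms using \eqref{id3} and Corollary \ref{corjacdual}---is indeed the paper's strategy, but your choice of generating set and ordering defeats the one cancellation that makes it work. You apply the elimination to $(\Psi^1,\{\Psi^e\}_{e\neq 1})$ with $\Psi^1$ placed first, so $f_1^x=0$ and $f_{1x}=\Psi^1$. The consequence is that $\Psi_1$ never appears among the listed generators in the sum over $k$: for instance the $k=1$ term is
\[
\bigl[\Psi^1,\,[\Psi^{e_2},\Psi^{e_3}]_1,\ldots,[\Psi^{e_2},\Psi^{e_{N_G}}]_1\bigr]-[\Psi^1],
\]
and \eqref{id3} only places the resultants $[\Psi^i,\Psi^j]_1$ in $\sqrt{(\Psi^1,\Psi_1)}$, not in $\sqrt{(\Psi^1)}$; since $(\Psi^{i,1})^2=\Psi^1\Psi^i_1-\Psi_1\Psi^{1i}$ does not vanish on all of $V(\Psi^1)$, these terms do not cancel and nothing ``telescopes.'' Moreover your second and third terms combine (via \eqref{inex}) to $[\Psi^1]-[\Psi^1,\{\Psi^{1e}\}]$, which carries a stray class $[\Psi^1]$ of entirely the wrong size to be absorbed into $[\Psi^1,\Psi_1]$.

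The missing idea is to include the graph polynomial itself and put it \emph{first}: the paper eliminates $x_1$ from the ordered list $\Psi,\Psi^1,\ldots,\Psi^{N_G}$, so that $f_1=\Psi=\Psi^1x_1+\Psi_1$ contributes $f_1^x=\Psi^1$ and $f_{1x}=\Psi_1$. Then every term of the sum contains both $\Psi^1$ and $\Psi_1$ among its generators, \eqref{id3} makes each adjoined resultant redundant on the level of reduced schemes, and the entire sum vanishes. The three surviving terms are then handled essentially as you intend: $[\Psi,\Psi^i]_1=\Psi^1\Psi^i_1-\Psi_1\Psi^{1i}$ turns the middle term into $[\Psi^1,\Psi_1\Psi^{1i}]$, which inclusion--exclusion plus Corollary \ref{corjacdual} reduce to $[\Psi^1,\Psi_1]$; the term $-[\Psi^1,\Psi^{1i}]$ is $-[\Sing(X_{G\backslash 1})]$; and the $\Lef$-term is the first term of \eqref{id1}. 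Your observations that Corollary \ref{corjacdual} supplies the $\Psi_1$ in the $\Lef$-term and that radical-membership arguments are legitimate in $K_0(\mathrm{Var}_k)$ because reduced schemes have equal classes are both sound; the gap is solely the omission of $\Psi$ and the resulting failure of the sum to vanish.
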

\begin{proof}
The clas of the singular locus $\Sing(X_G)$ in affine space is given by $[\Psi,\Psi^1,\ldots,$ $\Psi^{N_G}]$. Apply 
proposition \ref{propsimulred} to the polynomials $\Psi,\Psi^1,\ldots, \Psi^{N_G}$, in order,  with respect to $x=x_1$. 
Each term in the sum is of the form:
$$[\Psi^1,\Psi_1, .\,., \Psi^{1k},\Psi^k_{1}, [\Psi^{k+1},\Psi^{k+2}]_1,.\,., [\Psi^{k+1},\Psi^{N_G}]_1] -  [\Psi^1,\Psi_1, .\,., \Psi^{1k},\Psi^k_{1}] \ .$$
By equation $(\ref{id3})$, each resultant $ [\Psi^{k+1},\Psi^{m}]_1$ is in the radical of the ideal spanned by $\Psi^1,\Psi_1$. Thus the reduced schemes
defined by these two ideals are the same and the total contribution is  zero in the Grothendieck ring.
It follows that all terms in the sum vanish, and we are left with only the first three terms:
\begin{equation}\label{singinproof1}
[\Sing(X_G)]=  [\Psi^1,\Psi_1, \Psi^{1i},\Psi^i_1]\, \Lef+[\Psi^1,[\Psi,\Psi^i]_1] -[\Psi^1,\Psi^{1i}]\ , 
\end{equation}
where in each expression, $i$ ranges from $2$ to $N_G$.  Clearly $[\Psi,\Psi^i]_1 = \Psi^1\Psi^{i}_1 - \Psi_1\Psi^{1i}$ and hence $[\Psi^1,[\Psi,\Psi^i]_1]=[\Psi^1,\Psi_1\Psi^{1i}]$.
Equation (\ref{inex}) gives
\begin{equation}\label{singinproof2}
[\Psi^1,\Psi_1\Psi^{1i}]=[\Psi^1,\Psi_1]+[\Psi^1,\Psi^{1i}]-[\Psi^1,\Psi_1,\Psi^{1i}].
\end{equation}
By corollary \ref{corjacdual}, we know that $\Psi_1 \in \sqrt{I}$, where $I$ is the ideal generated by $\Psi^1, \Psi^{1i}$.
Hence the right hand side of (\ref{singinproof2}) reduces to $[\Psi^1,\Psi_1]$ in the Grothendieck ring.
The third term on the right hand side of (\ref{singinproof1}) defines the singular locus of $\Psi^1$, which is the graph polynomial of $G\backslash 1$. 
\end{proof}

\begin{proof}[Proof of thm.\ \ref{thmsing}]
If $G$ is disconnected then $\Psi=0$ and the theorem holds true.

We now assume that $G$ is connected and prove the theorem by induction over $N_G$.

The initial case is the tree with 2 edges which has $\Psi=1$ and the theorem follows trivially.

Let $N_G\geq3$. If $G$ has edge-connectivity 1 then there exists an edge $e$ that cuts $G$. Hence $\Psi$ does not depend on $x_e$ and $\Sing(X_G)$ is a trivial line bundle implying
the statement of the theorem. We hence may assume that $G$ has edge-connectivity $\geq2$.

By reducing eq.\ $(\ref{id1})$ of lemma \ref{lemsingform} modulo $\Lef$, we get
$$[\Sing(X_G)] \equiv[\Psi^1,\Psi_1]   - [\Sing(X_{G\backslash 1})]     \mod \Lef\ .$$
By equation $(2)$ in the proof of Proposition-Definition 18 in \cite{BS}, we know that $h_G\leq N_G-2$ ($G$ has at least 3 vertices) implies
$$[\Psi^1,\Psi_1] = [\Psi_{G\backslash 1},\Psi_{G\q 1}] \equiv 0 \mod \Lef\ ,$$
from which we obtain that $[\Sing(X_G)] \equiv   - [\Sing(X_{G\backslash 1})] \mod \Lef.$ Because $G$ has edge-connectivity $\geq2$ we know that $G\backslash 1$ is connected with at least
three vertices. By induction $[\Sing(X_{G\backslash 1})]\equiv0 \mod \Lef.$
\end{proof} 

\section{graphs with subdivergences}

We show that a graph $G$ in $\phi^4$ theory which is not primitive (i.e. which contains a non-trivial divergent subgraph) has vanishing 
$c_2$ invariant. 
\subsection{Structure of a 3-edge join}

\begin{center}
\fcolorbox{white}{white}{
  \begin{picture}(321,93) (499,-58)
    \SetWidth{1.0}
    \SetColor{Black}
    \GOval(530,-17)(40,19)(0){0.882}
    \Line(541,15)(577,-17.2)
    \Line(548,-17)(577,-17)
    \Line(541,-48)(577,-16.8)
        \Vertex(576,-17){2}
    \GOval(643,-17)(40,19)(0){0.882}
    \Line(596,-17)(630.8,-47.8)
    \Line(596,-17)(624,-17)
    \Line(596,-17)(631.3,14.6)
         \Vertex(597,-17){2}
    \GOval(530,-17)(40,19)(0){0.882}
    \Text(496,-55)[lb]{\Large{\Black{$G_1$}}}
    \Text(558,4)[lb]{\small{\Black{$1$}}}
    \Text(558,-15)[lb]{\small{\Black{$2$}}}
    \Text(558,-30)[lb]{\small{\Black{$3$}}}
    \Text(576,-28)[lb]{\small{\Black{$v_1$}}}
    \Text(594,-28)[lb]{\small{\Black{$v_2$}}}
    \Text(615,4)[lb]{\small{\Black{$1$}}}
    \Text(615,-15)[lb]{\small{\Black{$2$}}}
    \Text(615,-31)[lb]{\small{\Black{$3$}}}
    \Text(605,-55)[lb]{\Large{\Black{$G_2$}}}
     \GOval(728,-17)(40,19)(0){0.882}
    \GOval(800,-17)(40,19)(0){0.882}
    \Line(743,8)(785,8)
    \Line(747,-17)(781,-17)
    \Line(743,-42)(785.3,-42)
    \Text(765,11)[lb]{\small{\Black{$1$}}}
    \Text(765,-13)[lb]{\small{\Black{$2$}}}
    \Text(765,-39)[lb]{\small{\Black{$3$}}}
  \end{picture}
}
\end{center}

\begin{defn}Let $G_1$, $G_2$ denote connected graphs with distinguished 3-valent vertices $v_1\in V(G_1)$, $v_2\in V(G_2)$. A \emph{three edge join}
of $G_1$ and $G_2$ is the graph  obtained by gluing $G_1\backslash v_1$ and $G_2 \backslash v_2$ along the 3 pairs of external half edges in some way.  Define $n$ edge joins similarly.
\end{defn}

Recall from \cite{BS} lemma 22 that the existence of a 3-valent vertex  in $G_1$ implies that
$$\Psi_{G_1} = f_0 (x_1x_2+x_1x_3+x_2 x_3) + (f_2+f_3)x_1 +(f_1+f_3)x_2 + (f_1+f_2)x_3 + f_{123}$$
where  the polynomials $f_i$ are defined by 
$$f_0 = \Psi_{G_1 \backslash \{1,2\} \q 3} \ , \ f_1 =\Psi^{2,3}_{G_1,1} \ , \ f_2 = \Psi^{1,3}_{G_1,2}  \ , f_3 = \Psi^{1,2}_{G_1,3} \  , \ f_{123}=\Psi_{G_1\q \{1,2,3\} }  \  ,$$ 
and satisfy the equation
\begin{equation} \label{fids}
f_0f_{123}= f_1f_2+f_1f_3+f_2f_3\ .
\end{equation}
Let $g_0,g_1,g_2,g_3,g_{123}$ denote the corresponding structure coefficients of the graph polynomial $\Psi_{G_2}$. 
The structure of a general 3-edge join is similar.
\begin{prop}  Let $G_1$, $G_2$ be as in the definition, and let $G$ be their 3-edge join, with the edges numbered accordingly. Then 
\begin{equation}\label{3EJquadric}
f_0 g_0 \Psi_{G} = A_1 A_2 + A_1 A_3 +A_2 A_3 \end{equation}
where
\begin{equation} 
A_i = f_0 g_0 x_i + f_i g_0 + f_0 g_i \qquad \hbox{ for } i=1,2,3
\end{equation}
\end{prop}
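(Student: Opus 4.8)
The plan is to expand $\Psi_G$ as a multilinear polynomial in the three join variables $x_1,x_2,x_3$ and to compute each coefficient as the graph polynomial of $H_1:=G_1\backslash v_1$ and $H_2:=G_2\backslash v_2$ glued along a subset of the three attachment vertices. Writing $a_i\in V(H_1)$ and $b_i\in V(H_2)$ for the endpoints of the join edge $i$, iterated application of contraction-deletion $(\ref{1a})$ in $x_1,x_2,x_3$ gives
\[
\Psi_G=\sum_{T\subseteq\{1,2,3\}}\Big(\prod_{i\in T}x_i\Big)\,\Psi_{G\backslash T\q(\{1,2,3\}\setminus T)}\ .
\]
Deleting the edges in $T$ and contracting the others identifies $a_i$ with $b_i$ for each $i\notin T$, so the coefficient of $\prod_{i\in T}x_i$ is the graph polynomial of $H_1$ and $H_2$ glued along the $3-|T|$ vertices $\{a_i=b_i: i\notin T\}$. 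Thus it suffices to establish gluing formulas for graphs sharing one, two, and three vertices, and then to match the resulting four coefficients with the right-hand side of $(\ref{3EJquadric})$.

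First I would record the meaning of the structure coefficients by expanding $\Psi_{G_1}$ in exactly the same way and matching with its stated form: this identifies them with spanning forest polynomials of $H_1$, namely $f_0=\Phi_{H_1}^{\{a_1a_2a_3\}}=\Psi_{H_1}$, $f_i=\Phi_{H_1}^{\{a_i\},\{a_ja_k\}}$, and $f_{123}=\Phi_{H_1}^{\{a_1\},\{a_2\},\{a_3\}}$ (similarly for the $g_i$ on $H_2$). The one- and two-vertex gluing formulas are then immediate from Proposition \ref{exists} and the spanning-tree decomposition $T\mapsto(T\cap H_1,\,T\cap H_2)$. Gluing $H_1,H_2$ at a single vertex yields $\Psi_{H_1}\Psi_{H_2}=f_0g_0$, so the coefficient of $x_ix_j$ is $f_0g_0$; gluing at two vertices $a_j=b_j$, $a_k=b_k$ forces exactly one side to connect the two shared vertices and the other to separate them, giving the coefficient of $x_i$ as $\Psi_{H_1}\Phi_{H_2}^{\{b_j\},\{b_k\}}+\Phi_{H_1}^{\{a_j\},\{a_k\}}\Psi_{H_2}=f_0(g_j+g_k)+(f_j+f_k)g_0$, using $\Phi_{H_2}^{\{b_j\},\{b_k\}}=g_j+g_k$.

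The substantive step, and the main obstacle, is the constant term: the graph polynomial of $H_1$ and $H_2$ glued along all three vertices $a_i=b_i$. Decomposing a spanning tree as $(F_1,F_2)=(T\cap H_1,\,T\cap H_2)$, the forests $F_1,F_2$ induce partitions $P_1,P_2$ of $\{a_1,a_2,a_3\}$ and $\{b_1,b_2,b_3\}$, and the glued graph whose nodes are the trees of $F_1$ and of $F_2$, joined whenever two such trees meet at a shared vertex, must itself be a tree. Counting parts $c_1=|P_1|$, $c_2=|P_2|$, the edge count forces $c_1+c_2=4$, leaving the cases $(c_1,c_2)\in\{(1,3),(3,1),(2,2)\}$; the first two contribute $f_0g_{123}$ and $f_{123}g_0$, and the delicate case $(2,2)$ produces a spanning tree precisely when the singletons of $P_1$ and $P_2$ have different indices (otherwise two join identifications become parallel and create a cycle), contributing $\sum_{p\neq m}f_pg_m$. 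Hence the constant term equals $f_0g_{123}+f_{123}g_0+\sum_{i<j}(f_ig_j+f_jg_i)$. This finite case analysis is the only genuinely non-formal input.

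Finally I would assemble the four coefficients and compare with $A_1A_2+A_1A_3+A_2A_3$. Expanding with $A_i=f_0g_0x_i+f_ig_0+f_0g_i$ produces $f_0g_0$ times the quadratic part $f_0g_0(x_1x_2+x_1x_3+x_2x_3)$, the linear part $\sum_i x_i[(f_j+f_k)g_0+f_0(g_j+g_k)]$, and a constant $f_0g_0[f_{123}g_0+f_0g_{123}+\sum_{i<j}(f_ig_j+f_jg_i)]$, after replacing $\sum_{i<j}f_if_j=f_0f_{123}$ and $\sum_{i<j}g_ig_j=g_0g_{123}$ via $(\ref{fids})$. Dividing by $f_0g_0$ reproduces exactly the coefficients computed above, establishing $(\ref{3EJquadric})$; everything beyond the $(2,2)$ gluing analysis is bookkeeping organized by the multilinear expansion.
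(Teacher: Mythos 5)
Your proof is correct, and it reaches the paper's intermediate identity $(\ref{3EJformula})$ by a somewhat different route. The paper views $G$ as a single $3$-vertex join of $G_1\backslash 123$ against \emph{everything else} (i.e.\ $H_2$ together with the three join edges), invokes the $3$-vertex-join formula of Theorem 23 of \cite{BY} once, and then re-expands the resulting primed polynomials $g_0',g_i',g_{123}'$ in $x_1,x_2,x_3$ to recover the coefficients. You instead expand in $x_1,x_2,x_3$ first via contraction-deletion and then prove vertex-gluing formulas for one, two, and three shared vertices, re-deriving the $3$-vertex-join formula from scratch through the component-count analysis $(c_1,c_2)\in\{(1,3),(3,1),(2,2)\}$; your $(2,2)$ case (singletons must sit at different indices, else two gluing identifications become parallel and force a cycle) is exactly the combinatorial heart of the cited theorem. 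The trade-off is that your argument is self-contained where the paper leans on \cite{BY}, at the cost of redoing that lemma; both then finish with the same bookkeeping, using $(\ref{fids})$ to absorb the $g_0^2\sum_{i<j}f_if_j$ and $f_0^2\sum_{i<j}g_ig_j$ terms so that $A_1A_2+A_1A_3+A_2A_3$ is divisible by $f_0g_0$. One small point of care in your component count: what the edge count of a spanning tree actually forces is that the \emph{numbers of components} of $F_1$ and $F_2$ sum to $4$, and one must separately observe that a component containing no attachment vertex would disconnect the incidence graph, so that these numbers coincide with $|P_1|,|P_2|$; this is implicit in your argument and easily supplied, but worth stating.
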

\begin{proof} It suffices to show that
$$
\Psi_{G} = f_0g_0 (x_1x_2+x_1x_3+x_2 x_3)  + \sum_{i=1}^3 (f_0g_{i+1}+f_0g_{i+2}+g_0f_{i+1}+g_0f_{i+2}) x_i$$
\begin{equation} \label{3EJformula}  + (f_{123}g_0+ f_0g_{123} + \sum_{i\neq j} f_i g_j)\end{equation}
where the indices in the second sum are taken modulo 3.

To prove \eqref{3EJformula} we recall the proof of Theorem 23 in \cite{BY} and apply the formula for a 3 vertex join with $G_1\backslash 123$ on one side and
$G\backslash (G_1\backslash 123)$
on the other side.  We get
\[
  \Psi_G = f_{123}g_0' + f_1g_2' + f_1g_3' + f_2g_1' + f_2g_3' + f_3g_1' + f_3g_2' + f_0g_{123}'
\]
where the $g'_w$ are the corresponding polynomials for the $G\backslash (G_1\backslash 123)$
side.  However, looking at each $g_w'$ in terms of the allowable spanning forests we see that
\begin{align*}
  g_0' & = g_0\\
  g_i' & = x_ig_0 + g_i  \text{ for $i\in\{1,2,3\}$}\\
  g_{123}' & = (x_1x_2+x_1x_3+x_2x_3)g_0 + x_1(g_2+g_3) + x_2(g_1+g_3) + x_3(g_1+g_2)\\
  &\quad +\, g_{123}
\end{align*}
which gives the desired result.
\end{proof} 

\subsection{The class of a 3-edge join in the Grothendieck ring}

The 3-edge join is simple enough that we can denominator reduce to zero and hence obtain that the $c_2$ invariant vanishes.
\begin{prop}\label{c2 3}
 Let $G$ be a 3-edge join of $G_1$, $G_2$ as defined above.  Let $4$ be an edge of $G_1\backslash 123$ and let $5$ be an edge of $G_2\backslash 123$.  Then 
\[
  {}^5\Psi_{G}(1,2,3,4,5) = 0.
\]
Consequently, the $c_2$ invariant of $G$ is $0\mod q$.
\end{prop}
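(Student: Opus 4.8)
The plan is to compute the five-invariant ${}^5\Psi_G(1,2,3,4,5)=\pm[\Psi^{12,34}_G,\Psi^{13,24}_G]_5$ of Definition \ref{Def5inv} and show it vanishes identically, then feed this into Theorem \ref{thmDenom}. Both Dodgson polynomials $\Psi^{12,34}_G$ and $\Psi^{13,24}_G$ omit the variables $x_1,x_2,x_3,x_4$ (their indices all lie in the union of the deleted rows and columns) and are at most linear in $x_5$, so the resultant in $x_5$ is defined. The key point I would establish is that, as polynomials of degree $\le 1$ in $x_5$, the two are proportional over the ring of $x_5$-free polynomials; equivalently, each factors as an $x_5$-free polynomial times one common factor $W$ carrying the entire $x_5$-dependence. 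Granting this, write $\Psi^{12,34}_G = U\,W$ and $\Psi^{13,24}_G = V\,W$ with $U,V$ free of $x_5$; then, since $U,V$ pull out of the bracket, $[UW,VW]_5 = UV\,[W,W]_5 = 0$, whence ${}^5\Psi_G(1,2,3,4,5)=0$.

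To produce the common factor I would use the spanning forest description of Dodgson polynomials (Proposition \ref{exists}) together with the three-edge-cut structure, in the spirit of the proof of \eqref{3EJquadric} and of Theorem 23 of \cite{BY}. Since $\{1,2,3\}$ is a three-edge cut separating the $G_1$-side from the $G_2$-side, every spanning forest surviving in $\Psi^{12,34}_G$ (resp.\ $\Psi^{13,24}_G$) splits into a part lying in $G_1\backslash 123$ (containing edge $4$) and a part lying in $G_2\backslash 123$ (containing edge $5$), the two meeting only through the cut vertices, so all of the $x_5$-dependence is confined to the $G_2$-side factor. I would then argue that this $G_2$-side factor does not depend on how the three cut edges are distributed between the deleted rows and the deleted columns — the $G_2$-side only records how the cut vertices are joined to the cut, and the pattern ``two cut edges in rows, one in columns'' is the same for both Dodgsons — so it is the same polynomial $W$ (up to an overall sign) in $\Psi^{12,34}_G$ and in $\Psi^{13,24}_G$, the two differing only in their $x_5$-free $G_1$-side factors $U$ and $V$. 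The explicit coefficients $f_i$ of $G_1$ and $g_i$ of $G_2$ from \eqref{3EJquadric}, subject to \eqref{fids} and its $G_2$-analogue, give an independent check on the factored forms.

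With ${}^5\Psi_G(1,2,3,4,5)=0$ in hand, the vanishing of $c_2$ is immediate. By definition $D^5_G(1,\dots,5)=\pm{}^5\Psi_G(1,2,3,4,5)=0$, so the variety it cuts out is all of $\A^{N_G-5}$ and $[D^5_G]_q=q^{N_G-5}$. Provided $2h_G\le N_G$ and $5<N_G$ — which hold for the graphs of interest, in particular for an overall log-divergent $\phi^4$ graph carrying a subdivergence — Theorem \ref{thmDenom} with $n=5$ gives $c_2(G)_q\equiv(-1)^5[D^5_G]_q=-q^{N_G-5}\equiv 0\bmod q$.

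The main obstacle is the middle step: the combinatorial bookkeeping needed to show that the $G_2$-side factor is genuinely common to $\Psi^{12,34}_G$ and $\Psi^{13,24}_G$, including careful tracking of the signs $f_P\in\{-1,0,1\}$ of Proposition \ref{exists} and the treatment of degenerate configurations (for instance when an endpoint of edge $4$ coincides with a cut vertex, or when $G_1\backslash 123$ or $G_2\backslash 123$ fails to be connected). These cases should be handled uniformly by the spanning-forest formalism, but they are where the argument must be made airtight rather than merely plausible.
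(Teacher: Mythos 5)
Your argument is correct in substance but reaches the vanishing of ${}^5\Psi_G(1,2,3,4,5)$ by a different mechanism than the paper. The paper expands the five-invariant so that each term carries one of the two $3\times 3$ Dodgson polynomials $\Psi^{124,135}_G$, $\Psi^{125,134}_G$, and kills both outright by a colouring argument: a monomial of $\Psi^{124,135}_G$ would come from a spanning forest of $G\backslash 12345$ whose three trees must separate the endpoints of $4$ (both on the $G_1$ side) and also the endpoints of $5$ (both on the $G_2$ side), forcing some tree to meet both components of $G\backslash 123$, which is impossible. You instead stay with the defining resultant $\pm[\Psi^{12,34}_G,\Psi^{13,24}_G]_5$ of Definition \ref{Def5inv} and exhibit a common factor carrying all the $x_5$-dependence. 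That factorization is true and can be made precise via Proposition \ref{exists}: since edge $4$ lies on the $G_1$ side, a contributing forest together with the two contracted edges can only become a spanning tree if \emph{both} of the two extra components sit on the $G_1$ side; hence the induced partition on the $G_2$ side is always the trivial one-part partition, and the $G_2$-side factor of every term is the full spanning-tree polynomial of the $G_2$-side of the cut, i.e.\ $\Psi_{G_2\backslash v_2}$. Thus $\Psi^{12,34}_G=U\,\Psi_{G_2\backslash v_2}$ and $\Psi^{13,24}_G=V\,\Psi_{G_2\backslash v_2}$ with $U,V$ free of $x_5$, and the resultant vanishes. Be aware that the justification you sketch (``the pattern two cut edges in rows, one in columns is the same for both Dodgsons'') is not by itself the reason: one must first show that \emph{within} each Dodgson polynomial every contributing partition induces the same $G_2$-side partition, and that is exactly the forced-triviality argument above; this is the bookkeeping you flag as the main obstacle, and it does close (the degenerate cases you list cause no trouble, since a disconnected $G_2$-side simply makes both Dodgsons vanish). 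The final step feeding $D^5_G=0$ into Theorem \ref{thmDenom} matches the paper. On balance the paper's route is shorter because it never needs to identify the common factor, while yours yields the explicit factorization of the $2\times 2$ Dodgson polynomials across a $3$-edge cut, which is more structural information.
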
 

\begin{proof}
Consider $\Psi^{124, 135}_G$.  Monomials in this polynomial correspond to certain trees in $G\backslash 135 /24$.  Consequently, they correspond to certain spanning forests of $G\backslash 12345$ where the end points of $2$ and $4$ are coloured with three colours. 

Monomials of $\Psi^{124, 135}_G$ also correspond to trees in $G\backslash 124 / 35$, and hence to spanning forests of $G\backslash 12345$ where the end points of $3$ and $5$ are coloured with the same three colours.

But $4$ is in $G_1$ and $5$ is in $G_2$.  Thus the connected components of $G\backslash 123$ each contain at least two of the three colours.  Therefore, there is a colour which appears in both components.  But all vertices of the same colour must be in the same tree of the forest, so there can be no such spanning forest of $G\backslash 12345$.  Thus $\Psi^{124, 135}_G=0$.  The same argument holds with $4$ and $5$ swapped and so ${}^5\Psi_{G}(1,2,3,4,5) = \pm   [\Psi^{24,35}_1,  \Psi^{124,135}]=0$, by definition \ref{Def5inv}. 
\end{proof}

There is a direct way to show that the $c_2$ invariant of a 3 edge join vanishes.
\begin{prop}\label{c2 3 direct} Let $G$ be the 3-edge join of $G_1$, $G_2$ as defined above. Then 
$$[X_G] \equiv 0 \mod \Lef^3 $$
In other words, the $c_2$ invariant of $G$ vanishes in the Grothendieck ring.
\end{prop}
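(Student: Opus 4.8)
The plan is to study the projection $\pi\colon X_G\to B$ onto the base $B=\A^{N_G-3}$ of all edge variables \emph{other} than the three join edges $x_1,x_2,x_3$, and to compute $[X_G]$ fibrewise. Write $B=Y_1\times Y_2$, where $Y_1=\A^{|E(G_1)|-3}$ carries the variables of $G_1\backslash\{1,2,3\}$ and $Y_2$ those of $G_2\backslash\{1,2,3\}$; thus $f_0,f_1,f_2,f_3$ are functions on $Y_1$ alone and $g_0,g_1,g_2,g_3$ on $Y_2$ alone. Over the open set $U=\{f_0g_0\neq 0\}\subseteq B$ the assignment $(x_1,x_2,x_3)\mapsto(A_1,A_2,A_3)$ from $(\ref{3EJquadric})$ is a fibrewise affine isomorphism, since it scales each coordinate by the unit $f_0g_0$ and translates; hence $(\ref{3EJquadric})$ turns $\Psi_G=0$ into $A_1A_2+A_1A_3+A_2A_3=0$. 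This is the affine cone over a smooth plane conic carrying the integral point $(0:0:1)$, smoothness and this section holding in every characteristic (including $2$, where the would‑be singular point $(1:1:1)$ does not lie on the conic); so the cone is a $\GG_m$‑bundle over $\PP^1$ completed by its vertex and has class $\Lef^2$. Therefore $\pi^{-1}(U)\cong U\times\{A_1A_2+A_1A_3+A_2A_3=0\}$ and $[\pi^{-1}(U)]=[U]\,\Lef^2$.

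Over the closed set $\{f_0g_0=0\}$ the quadratic part $f_0g_0(x_1x_2+x_1x_3+x_2x_3)$ of $(\ref{3EJformula})$ vanishes, so there $\Psi_G=L_1x_1+L_2x_2+L_3x_3+c$ is affine‑linear in the join variables, with coefficients read off from $(\ref{3EJformula})$; in particular $L_i=f_0(g_{i+1}+g_{i+2})+g_0(f_{i+1}+f_{i+2})$ with indices mod $3$. A single affine‑linear equation cuts out a hyperplane (class $\Lef^2$) when $(L_1,L_2,L_3)\neq 0$, the empty set when $(L_1,L_2,L_3)=0\neq c$, and all of $\A^3$ (class $\Lef^3$) when $L_1=L_2=L_3=c=0$. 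The first of these gives a Zariski‑locally trivial $\A^2$‑bundle. Setting $Z_0=\{f_0g_0=0,\ L_1=L_2=L_3=0\}\subseteq B$ and using $B=U\sqcup\{f_0g_0=0,(L_1,L_2,L_3)\neq 0\}\sqcup Z_0$, the strata assemble to
\begin{equation*}
[X_G]=\Lef^{N_G-1}-[Z_0]\,\Lef^2+[Z_0\cap\{c=0\}]\,\Lef^3 .
\end{equation*}
Since $N_G\geq 4$ this reduces to $[X_G]\equiv -[Z_0]\,\Lef^2\bmod\Lef^3$, so everything comes down to proving $[Z_0]\equiv 0\bmod\Lef$.

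To handle $Z_0$ I would use the product structure $B=Y_1\times Y_2$ together with $L_i=g_0(f_{i+1}+f_{i+2})$ on $V(f_0)$ and $L_i=f_0(g_{i+1}+g_{i+2})$ on $V(g_0)$. On $Z_0\cap V(f_0)$ the conditions factor as $(Y_1\times V(g_0))\cup(R'\times Y_2)$ with $R'=V(f_2+f_3,f_3+f_1,f_1+f_2)\subseteq Y_1$, whence by inclusion–exclusion
\[
[Z_0\cap V(f_0)] = [V(f_0)\times V(g_0)]+[(V(f_0)\cap R')\times Y_2]-[(V(f_0)\cap R')\times V(g_0)].
\]
Each term is divisible by $\Lef$: the middle one is a cylinder over $Y_2$ (of dimension $\geq 1$ for a genuine join), while the other two contain a factor $V(g_0)$, and $V(f_0)=X_{G_1\backslash\{1,2\}\q 3}$, $V(g_0)=X_{G_2\backslash\{1,2\}\q 3}$ are graph hypersurfaces whose classes are divisible by $\Lef$ by the standard vanishing for graph polynomials of graphs with at least two vertices (\cite{AM}, in the Grothendieck‑ring form underlying $(\ref{c2modq2})$–$(\ref{1})$). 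The symmetric computation on $V(g_0)$, together with $Z_0\cap V(f_0)\cap V(g_0)=V(f_0)\times V(g_0)$, yields $[Z_0]\equiv 0\bmod\Lef$ and hence the claim.

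The main obstacle is precisely this last step: because the factorisation $L_i=g_0(f_{i+1}+f_{i+2})$ entangles the two sides, the decomposition of $Z_0$ must be arranged so that \emph{every} resulting stratum is visibly either a cylinder over $Y_1$ or $Y_2$ (supplying an $\Lef$‑factor) or a product involving a graph hypersurface of a contracted/deleted piece of $G_1$ or $G_2$ (to which the graph‑polynomial divisibility applies). This forces one to verify that the contracted graphs such as $G_1\backslash\{1,2\}\q 3$ really have enough vertices, and to rule out degenerate self‑loop configurations for which $[V(f_0)]\not\equiv 0\bmod\Lef$. By comparison the quadric‑cone class and the affine‑linear fibre analysis are routine.
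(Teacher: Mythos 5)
Your proof is correct and follows essentially the same route as the paper: stratify the base by the vanishing of $f_0$ and $g_0$, use the quadric form $(\ref{3EJquadric})$ to get fibres of class $\Lef^2$ on the open stratum and the linear form $(\ref{3EJformula})$ on the rest, and reduce everything to the divisibility of graph-hypersurface classes (of $f_0$, $g_0$ and of the ambient affine spaces) by $\Lef$. Your reorganisation of the degenerate locus into $Z_0$ and its inclusion–exclusion over $V(f_0)$, $V(g_0)$, as well as the explicit verification that the conic is smooth with a rational point in every characteristic, are only cosmetically different from the paper's four-way decomposition into $U_{fg}$, $U_f$, $U_g$, $V(f_0,g_0)$.
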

\begin{proof}
Let $U_{fg}$ and $U_{fg}'$ denote the open set $f_0, g_0 \neq 0$ in ambient space $\A^{N_G}$ and $\A^{N_G-3}$, respectively.
From $(\ref{3EJquadric})$, we have 
$$[X_G \cap U_{fg}] = \Lef^2 [U_{fg}']\ , $$
since the right-hand side of $(\ref{3EJquadric})$ defines a quadric in $\A^3$ whose class is $\Lef^2$.
Now let $U_f\subseteq\A^{N_G}$  denote  the   set $f_0\neq 0, g_0= 0$, and  likewise  let $U_g$  denote  the   set $g_0\neq 0, f_0= 0$.
From $(\ref{3EJformula})$, the polynomial $\Psi_G$ restricted to $U_f$ takes the form
\begin{equation}\label{genhyp}
(g_1+g_2) y_1 +(g_1+g_3) y_2+(g_2+g_3)y_3+ f_0g_{123}\end{equation}
where $y_i = f_0 x_i +f_i$ for $i=1,2,3$.  Consider the projection $ X_G \cap U_f  \rightarrow \A^{N_G-3}\cap U_f$.
By equation $(\ref{genhyp})$, the generic fiber   is a hyperplane in $\A^3$ whose class is $\Lef^2$. Otherwise, $g_1,g_2,g_3$ vanish and
there are two possibilities: if $g_{123}=0$ the fiber is isomorphic to  $\A^3$, otherwise it is empty.  We  therefore have
$$[X_G\cap U_f ] = M_{f_0}\times \big( \Lef^3 [g_0,g_1,g_2,g_3,g_{123}]  +\Lef^2([g_0] - [g_0,g_1,g_2,g_3])\big)\ ,$$
where $M_{f_0}= [\A^{N_{G_1}-3}\backslash V(f_0)]$ and all terms in brackets are viewed in $\A^{N_{G_2}-3}$. A similar equation holds for
$[X_G\cap U_g]$. Finally,
$$[X_G\cap V(f_0,g_0)]=\Lef^3 [f_0,g_0,\sum_{i\neq j} f_i g_j], $$
where the right hand side has ambient space $\A^{N_G-3}$. Writing
$[X_G]=[X_G\cap U_{fg} ] + [X_G\cap U_{f} ] +[X_G\cap U_{g} ] +[X_G\cap V(f_0,g_0) ]$ gives
\begin{eqnarray*}
[X_G] &= &    \Lef^{N_G-1}  - \Lef^2 ( M_{f_0} [g_0,g_1,g_2,g_3] + M_{g_0} [f_0,f_1,f_2,f_3]+[f_0][g_0])\\
 & +& \Lef^3 \Big( M_{f_0} [g_0,g_1,g_2,g_3,g_{123}] +  M_{g_0} [f_0,f_1,f_2,f_3,f_{123}]\\
 &&\quad +\,[f_0, g_0,\sum_{i\neq j} f_i g_j] \Big).
\end{eqnarray*}
In particular, the $c_2$ invariant of $G$ in the Grothendieck ring is
$$c_2(G)\equiv-\,M_{f_0} [g_0,g_1,g_2,g_3] - M_{g_0} [f_0,f_1,f_2,f_3]-[f_0][g_0] \mod \Lef\ .$$
However, it follows from \cite{BS} Proposition-Definition 18 (1) that $[f_0]\equiv 0 \mod \Lef$ since $f_0$ is  a graph polynomial and therefore  linear in every variable.
Thus $M_{f_0}\equiv 0 \mod \Lef$, and the same holds for $M_{g_0}$.  It follows that $c_2(G)\equiv0\mod\Lef$.  
\end{proof}

\subsection{Four edge joins}

The 4-edge joins are trickier, as we can take a denominator calculation part way there, and then must appeal to the Chevelley-Warning theorem via a separate argument.
\begin{lem} \label{lem4EJ}
Let $G$ be the 4-edge join of $G_1$ and $G_2$.  Let $5$ be an edge of $G_1\backslash 1234$ and let $6$ be an edge of $G_2\backslash 1234$.  Then 
\[
  D^6_G(1,2,3,4,5,6) = \pm(P_{1,\{ij,kl\}}P_{2,\{il,jk\}} + P_{1,\{il,jk\}}P_{2,\{ij,kl\}})
\]
for any $\{i,j,k,l\}=\{1,2,3,4\}$ where
    $P_{t,\{ij,kl\}}  = \pm \Psi_{G_t, i}^{jkl,kl(4+t)}\Psi_{G_t,k}^{ijl,ij(4+t)}$.
\end{lem}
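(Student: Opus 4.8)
The plan is to express the two Dodgson polynomials entering the five-invariant in terms of spanning forest polynomials of $G_1$ and $G_2$, and then to carry out the two resultant steps of the denominator reduction one factor (the $G_1$ side, then the $G_2$ side) at a time. First I would fix notation for the join: let $a_1,\dots,a_4$ be the endpoints of the join edges $1,2,3,4$ lying on the $G_1$ side and $b_1,\dots,b_4$ the endpoints on the $G_2$ side, so that deleting $1,2,3,4$ disconnects $G$ into $G_1\backslash 1234$ and $G_2\backslash 1234$. By definition \ref{Def5inv},
$$D^5_G(1,2,3,4,5)=\pm[\Psi_G^{12,34},\Psi_G^{13,24}]_5,$$
so the first task is to understand $\Psi_G^{ij,kl}$ for $\{i,j,k,l\}=\{1,2,3,4\}$. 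By proposition \ref{exists} each such polynomial is a signed sum of spanning forest polynomials on the eight vertices $a_m,b_m$, and since the join edges are precisely the ones removed, the polynomial is independent of $x_1,\dots,x_4$ and the relevant forests may be taken in the disconnected graph $G\backslash 1234$.

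Next I would determine which colourings survive, following the colour-matching analysis in the proof of proposition \ref{c2 3}. A monomial of $\Psi_G^{ij,kl}$ corresponds simultaneously to a coloured spanning forest read off from $G/\{i,j\}\backslash\{k,l\}$ and from $G/\{k,l\}\backslash\{i,j\}$; because $G\backslash 1234$ is the disjoint union of the two sides, each such forest is a \emph{pair} of forests, one on each side, and the colour classes are linked across the cut only through the contracted join edges. With three join edges this forced an inconsistency and everything vanished (proposition \ref{c2 3}); with four it leaves exactly the \emph{balanced} colourings, in which two of the join edges link the two sides and each side receives a consistent two-colouring of its four attachment vertices. Identifying these two-coloured spanning forest polynomials of $G_t$ with Dodgson polynomials $\Psi_{G_t,\cdot}^{\cdot,\cdot}$ via proposition \ref{exists} and contraction-deletion $(\ref{1a})$, this exhibits each $\Psi_G^{ij,kl}$ as a bilinear expression $\sum\pm\,\Psi_{G_1}^{\alpha}\Psi_{G_2}^{\beta}$, with the Pl\"ucker identity of \S\ref{subsectDodg} relating the three pairings of $\{1,2,3,4\}$.

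With these bilinear forms in hand I would carry out the two resultants. Because edge $5$ lies in $G_1$ and edge $6$ in $G_2$, the $x_5$-dependence sits entirely in the $G_1$ factors and the $x_6$-dependence entirely in the $G_2$ factors. Taking $[-,-]_5$ therefore acts only on the $G_1$ factors, and (after simplifying with the Dodgson and Pl\"ucker identities of \S\ref{subsectDodg}) collapses the resulting $x_5$-resultant into the $G_1$ products $\Psi_{G_1,i}^{jkl,kl5}\Psi_{G_1,k}^{ijl,ij5}=\pm P_{1,\{ij,kl\}}$, while the $G_2$ factors survive unchanged as polynomials of degree $\le 1$ in $x_6$. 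This displays $D^5_G$ as a product of two factors linear in $x_6$, so the next step of the reduction is defined, and $[-,-]_6$ applied to the $G_2$ factors produces in the same way the products $P_{2,\cdot}$. Collecting the two cross terms yields the claimed formula.

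The main obstacle, and the bulk of the work, lies in the two middle steps: verifying that precisely the balanced colourings survive and matching them to the specific Dodgson polynomials $\Psi_{G_t,i}^{jkl,kl(4+t)}$ with their correct subscripts and removed indices; confirming that $D^5_G$ genuinely factors into two pieces of degree $\le 1$ in $x_6$, so that $D^6_G$ is even defined; and pinning down all the signs, since the five-invariant, the Pl\"ucker relation, and each of the two resultants separately contribute sign ambiguities that must be reconciled against the single $\pm$ in the statement.
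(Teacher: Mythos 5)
Your route is genuinely different from the paper's, and it has a gap at its critical step. The paper does not start from the five-invariant of $1,2,3,4,5$: it forms ${}^5\Psi_G(2,3,4,5,6)$ and then reduces edge $1$, and the whole proof hinges on the observation that $G\backslash 1$ is a $3$-edge join, so that by the argument of proposition \ref{c2 3} one has $\Psi^{1236,1245}=\Psi^{1235,1246}=0$. This makes ${}^5\Psi_G(2,3,4,5,6)$ \emph{linear} in $x_1$, so the sixth reduction step is automatic and yields $\Psi^{236,245}_1\Psi^{135,146}_2-\Psi^{136,145}_2\Psi^{235,246}_1$; only then does the spanning-forest/bilinearity analysis enter, applied to these four Dodgson polynomials (each of which factors as a product $A_1^{\cdot,\cdot}A_2^{\cdot,\cdot}$ of one factor per side). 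You never invoke this vanishing, and in your ordering the analogous issue cannot be dodged: after computing $[\Psi^{12,34},\Psi^{13,24}]_{x_5}$ as a double sum $\sum_{P,Q}\pm\,\Phi_{H_2}^{P}\Phi_{H_2}^{Q}\,[\Phi_{H_1}^{P'},\Phi_{H_1}^{Q'}]_{x_5}$, you must prove that this sum of terms, each quadratic in $x_6$, regroups into a product of two factors of degree $\leq 1$ in $x_6$ so that the last reduction is even defined. You flag this as work to be done but offer no mechanism, and the order-independence of denominator reduction does not help here, since it only compares orders for which the reduction is already known to be defined.

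Moreover, your description of the intermediate result is internally inconsistent with the target formula. You say the $x_5$-resultant ``collapses into the $G_1$ products $\Psi_{G_1,i}^{jkl,kl5}\Psi_{G_1,k}^{ijl,ij5}=\pm P_{1,\{ij,kl\}}$, while the $G_2$ factors survive unchanged.'' If $D^5_G(1,2,3,4,5)$ were of the form $P_{1,\{ij,kl\}}\cdot W$ with $W$ involving only $G_2$ variables, then the $x_6$-resultant would produce $P_{1,\{ij,kl\}}$ times a $G_2$-polynomial, i.e.\ a result divisible by a single $G_1$ factor. But the claimed answer $P_{1,\{ij,kl\}}P_{2,\{il,jk\}}+P_{1,\{il,jk\}}P_{2,\{ij,kl\}}$ contains two \emph{distinct} $G_1$ products and is not divisible by either, so the $x_5$-resultant cannot collapse in the way you describe. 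The correct structure (visible in the paper's computation) is that each of the four Dodgson polynomials entering the penultimate expression is separately a product of one $G_1$ and one $G_2$ factor, and the two-term shape of $D^6_G$ arises from the difference of the two products in the five-invariant, not from a single factorization in $x_6$. To repair your argument you would need to either establish the factorization of $D^5_G(1,2,3,4,5)$ in $x_6$ honestly, or switch to the paper's reduction order and import the $3$-edge-join vanishing.
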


\begin{proof} Recall that
  \[
  {}^5\Psi(2,3,4,5,6)
   = \Psi^{236,245}\Psi^{35,46}_2 - \Psi^{36,45}_2\Psi^{235,246}. 
  \]
  The graph $G\backslash 1$ is a 3-edge join, so by the proof of proposition \ref{c2 3}, and contraction-deletion, we immediately obtain
  $\Psi^{1236,1245}=\Psi^{1235,1246}=0$.  
  Thus we can denominator reduce edge $1$ to get
  \[
  D^6(1,2,3,4,5,6) = \Psi^{236,245}_1\Psi^{135,146}_2 - \Psi^{136,145}_2\Psi^{235,246}_1 \ .
  \]

  Now consider $\Psi^{236,245}_1$.  Let the vertex on the $G_i$ side of edge $1$ be $a_i$, and similarly for $b_i$, $c_i$, $d_i$ for edges $2$, $3$, $4$, and $5$ respectively.  Let $e_1$ and $f_1$ be the vertices of edge $5$ and $e_2$ and $f_2$ for edge $6$.  Then
  \begin{align*}
  & \Psi^{236,245}_1 \\
    & = \pm \Big(\Phi_{G\backslash 23456}^{\{a_1,a_2,e_1,e_2,d_1,c_2\},\{f_1,c_1\},\{f_2,d_2\}} 
    +\Phi_{G\backslash 23456}^{\{a_1,a_2,e_1,e_2,d_1\},\{f_1,c_1\},\{f_2,d_2,c_2\}} \\ 
    & \qquad + \Phi_{G\backslash 23456}^{\{a_1,a_2,e_1,e_2,c_2\},\{f_1,c_1,d_1\},\{f_2,d_2\}} 
    +\Phi_{G\backslash 23456}^{\{a_1,a_2,e_1,e_2\},\{f_1,c_1,d_1\},\{f_2,d_2,c_2\}} \\
    & \qquad \pm \text{the same four terms with $e_1$ and $f_1$ transposed, $e_2$ and $f_2$ transposed} \\
    & \qquad \quad \text{and both transposed with sign the sign of the permutation}\Big)
\end{align*}
The internal signs are consequences of corollaries 17 and 18 of \cite{BY}.  Focus on the first four terms. No edges with ends not in the partitions join the two halves of the graph and so
\begin{align*}
  & \Phi_{G\backslash 23456}^{\{a_1,a_2,e_1,e_2,d_1,c_2\},\{f_1,c_1\},\{f_2,d_2\}} 
    +\Phi_{G\backslash 23456}^{\{a_1,a_2,e_1,e_2,d_1\},\{f_1,c_1\},\{f_2,d_2,c_2\}} \\ 
    & \qquad + \Phi_{G\backslash 23456}^{\{a_1,a_2,e_1,e_2,c_2\},\{f_1,c_1,d_1\},\{f_2,d_2\}} 
    +\Phi_{G\backslash 23456}^{\{a_1,a_2,e_1,e_2\},\{f_1,c_1,d_1\},\{f_2,d_2,c_2\}} \\ 
    & = \Phi_{H_1}^{\{a_1,e_1,d_1\},\{f_1,c_1\}}\Phi_{H_2}^{\{a_2,e_2,c_2\},\{f_2,d_2\}}  
    + \Phi_{H_1}^{\{a_1,e_1,d_1\},\{f_1,c_1\}}\Phi_{H_2}^{\{a_2,e_2\},\{f_2,d_2,c_2\}}  \\
    & \qquad + \Phi_{H_1}^{\{a_1,e_1\},\{f_1,c_1,d_1\}}\Phi_{H_2}^{\{a_2,e_2,c_2\},\{f_2,d_2\}}  
    + \Phi_{H_1}^{\{a_1,e_1\},\{f_1,c_1,d_1\}}\Phi_{H_2}^{\{a_2,e_2\},\{f_2,d_2,c_2\}} \\
    & = \left(\Phi_{H_1}^{\{a_1,e_1,d_1\},\{f_1,c_1\}}+ \Phi_{H_1}^{\{a_1,e_1\},\{f_1,c_1,d_1\}}\right)\left(\Phi_{H_2}^{\{a_2,e_2,c_2\},\{f_2,d_2\}}+\Phi_{H_2}^{\{a_2,e_2\},\{f_2,d_2,c_2\}}\right) \\
    & = \Phi_{H_1}^{\{a_1,e_1\},\{f_1,c_1\}}\Phi_{H_2}^{\{a_2,e_2\},\{f_2,d_2\}}
\end{align*}
  where $H_t = G_t \backslash 1234$ for $t=1,2$.

  Calculating similarly on the remaining terms
  \begin{align*}
  & \Psi^{236,245}_1 \\
    & = \pm\left(\Phi_{H_1}^{\{a_1,e_1\},\{c_1,f_1\}}-\Phi_{H_1}^{\{a_1,f_1\},\{c_1,e_1\}}\right)\left(\Phi_{H_2}^{\{a_2,e_2\},\{d_2,f_2\}}-\Phi_{H_2}^{\{a_2,f_2\},\{d_2,e_2\}}\right)
  \end{align*}

  Let $A_t^{m,n} = \Phi_{H_t}^{\{m_t,e_t\},\{n_t,f_t\}}-\Phi_{H_t}^{\{m_t,f_t\},\{n_t,e_t\}}$ for $t \in \{1,2\}$ and  $m,n\in\{a_t,b_t,c_t,d_t\}$.  Note that $A_t^{m,n} = -A_t^{n,m}$. 
The preceding calculations show that 
  \[
  \Psi^{236,245}_1 = \pm A_1^{a,c}A_2^{a,d}.
  \]
  Calculating similarly we get
  \begin{align*}
    \Psi^{135,146}_2 & = \pm A_1^{b,d}A_2^{b,c} \\
    \Psi^{136,145}_2 & = \pm A_1^{b,c}A_2^{b,d} \\
    \Psi^{235,246}_1 & =\pm  A_1^{a,d}A_2^{a,c}
  \end{align*}

Furthermore, $A_1^{a,b} = \pm \Psi_{G_1,1}^{234,345} = \pm \Psi_{G_1,2}^{134,345}$ and similarly for the other $A_t^{m,n}$.  Thus $P_{t,\{12,34\}} = \pm A_t^{a,b}A_t^{c,d}$,
    $P_{t,\{13,24\}} = \pm A_t^{a,c}A_t^{b,d}$, and
    $P_{t,\{14,23\}}  = \pm A_t^{a,d}A_t^{b,c}$.
  Choosing the signs on the $P_{t,\{ij,kl\}}$ appropriately we get
  \[
    D^6_G(1,2,3,4,5,6) = \pm(P_{1,\{13,24\}}P_{2,\{14,23\}} + P_{1,\{14,23\}}P_{2,\{13,24\}}).
  \]

  The other permutations of $1,2,3,4$ in the expression for $D^6$ in the statement of the theorem must hold by symmetry.  We can also verify them directly from the identity
  \[
    A_t^{a,b}A_t^{c,d} - A_t^{a,c}A_t^{b,d} +  A_t^{a,d}A_t^{b,c} = 0
  \]
  for $t=1,2$ which can be checked by expanding each term in spanning forests.
\end{proof}

\begin{remark} The expression for $D^6$ in the previous lemma is symmetric under various twisting operations.  
{}From the expressions for the $P_{t,\{ij,kl\}}$ in terms of the $A_t^{m,n}$ we see that each $P_{t,\{ij,kl\}}$ is invariant under the permutations $(1234)$, $(2143)$, and $(4321)$.
If we denote the four external vertices of $G_i\backslash \{1,2,3,4\}$
 by $v^i_{1},\ldots, v^i_4$ (not necessarily distinct), then any 4-edge join of $G_1$ and $G_2$ is obtained by connecting $v^1_i$ to  $v^2_{\sigma(i)}$ for $i=1,\ldots, 4$, where $\sigma$ is any permutation of $1,2,3,4$. Denote the corresponding $4$-edge join by $G_1\cup_{\sigma} G_2$. 
Then we have the following twisting identities:
 \begin{equation}\label{sym eq}
   D^6_{G_1 \cup_{id} G_2}= \pm D^6_{G_1 \cup_{\sigma} G_2}\ , 
 \end{equation}
 for all $\sigma \in V=\{(1234), (2143), (3412), (4321)\} $. 
\end{remark} 

\begin{prop}\label{c2 4} Let $G$ be a 4-edge join of $G_1$, $G_2$, and  let $A_i = G_i \backslash \{1,2,3,4\}$. If 
$2h_G\leq N_G$  and $ 2 h_{A_2}\leq N_{A_2} -2$ then  $c_2(G)_q\equiv 0 \mod q$.
\end{prop}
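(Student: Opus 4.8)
The plan is to reduce the statement to a point count of the partially reduced denominator $D^6_G$, and then to exploit its factored form from Lemma \ref{lem4EJ} through a fibration over one half of the graph together with the Chevalley--Warning theorem applied to the other half. First I would invoke Theorem \ref{thmDenom}. Lemma \ref{lem4EJ} shows that the denominator reduction is defined through step $6$, and the hypothesis $2h_{A_2}\le N_{A_2}-2$ forces $N_{A_2}\ge 2$, whence $N_G = N_{A_1}+N_{A_2}+4\ge 7>6$; combined with $2h_G\le N_G$ this puts us in the range $5\le 6<N_G$, so that
\[
c_2(G)_q \equiv (-1)^6[D^6_G(1,\ldots,6)]_q = [D^6_G]_q \bmod q.
\]
It therefore suffices to prove $[D^6_G]_q\equiv 0\bmod q$. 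By Lemma \ref{lem4EJ} we may write $D^6_G = \pm(p_1q_2 + p_2q_1)$, where $p_1=P_{1,\{ij,kl\}}$ and $p_2=P_{1,\{il,jk\}}$ are polynomials in the coordinates $x$ indexed by $E(A_1)\setminus\{5\}$, while $q_1=P_{2,\{ij,kl\}}$ and $q_2=P_{2,\{il,jk\}}$ are polynomials in the coordinates $y$ indexed by $E(A_2)\setminus\{6\}$. These two variable sets are disjoint, and the ambient space $\A^{N_G-6}$ splits as $\A^{N_{A_1}-1}\times\A^{N_{A_2}-1}$.

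The decisive step is to fibre over the $A_1$ coordinates and count in the $A_2$ fibre:
\[
[D^6_G]_q = \sum_{x\in\FF_q^{N_{A_1}-1}}\bigl|\{\,y\in\FF_q^{N_{A_2}-1} : p_1(x)\,q_2(y)+p_2(x)\,q_1(y)=0\,\}\bigr|.
\]
For each fixed $x$ the inner count is the number of $\FF_q$-points of the zero locus of the single polynomial $F_x(y)=p_1(x)q_2(y)+p_2(x)q_1(y)$ in the $N_{A_2}-1$ variables $y$. I would then bound its degree: each factor $P_{2,\bullet}=\pm\Psi_{G_2,i}^{jkl,kl6}\,\Psi_{G_2,k}^{ijl,ij6}$ is a product of two Dodgson polynomials of $G_2$ in which three edge-rows and three edge-columns have been deleted, so each has degree at most $h_{G_2}-3$; since $A_2=G_2\backslash\{1,2,3,4\}$ arises by deleting the four edges meeting a single $4$-valent vertex we have $h_{A_2}=h_{G_2}-3$, and hence $\deg q_1,\deg q_2\le 2h_{A_2}$, so $\deg F_x\le 2h_{A_2}$. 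The hypothesis $2h_{A_2}\le N_{A_2}-2$ now yields $\deg F_x\le N_{A_2}-2 < N_{A_2}-1$, strictly below the number of variables. By the Chevalley--Warning theorem in its strong form (divisibility by $q$, not merely by $p$), the number of zeros of $F_x$ is $\equiv 0\bmod q$; this holds uniformly whether $F_x$ vanishes identically, is a nonzero constant, or is nonconstant of degree below the variable count. Summing over $x$ gives $[D^6_G]_q\equiv 0\bmod q$, and therefore $c_2(G)_q=0$.

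The main obstacle is conceptual rather than computational: unlike the $2$- and $3$-edge joins, the $4$-edge join cannot be denominator-reduced to zero, so one must work with the genuinely nonzero polynomial $D^6_G$ and extract the vanishing of its point count by a separate argument. This is precisely why the conclusion can only be reached at the level of point-counting functions modulo $q$, and not in the Grothendieck ring. The one delicate point to get right is the asymmetry of the hypotheses: the bound $2h_{A_2}\le N_{A_2}-2$ is tailored so that the degree of the $A_2$-factors falls strictly below the number of $A_2$-variables, and it is this inequality that dictates fibring over the $A_1$ coordinates while letting $A_2$ play the role of the Chevalley--Warning fibre.
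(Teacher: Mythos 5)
Your proposal is correct and follows essentially the same route as the paper: reduce to $[D^6_G]_q$ via Theorem \ref{thmDenom}, use the split form of Lemma \ref{lem4EJ}, fibre over the $A_1$ coordinates, and apply Chevalley--Warning (in the $q$-divisibility form) to each fibre, which is a hypersurface of degree at most $2h_{A_2}<N_{A_2}-1$ in $N_{A_2}-1$ variables. The only cosmetic difference is that you sum over all of $\FF_q^{N_{A_1}-1}$ rather than over the image of the projection, which handles the empty fibres uniformly.
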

\begin{proof} By theorem \ref{thmDenom}, the $c_2$ invariant of $G$ is computed by its denominator reduction.  By lemma \ref{lem4EJ}, the zero locus  of 
$D^6_G(1,2,3,4,5,6)$ is given by 
$Z=V(P_1Q_2+Q_1P_2)\subset \A^{N_{A_1}-1} \times \A^{N_{A_2}-1}$ for polynomials $P_i,Q_i$ defined over $\ZZ$.  Consider the projection 
$\pi_1: \A^{N_{A_1}-1} \times \A^{N_{A_2}-1}\rightarrow \A^{N_{A_1}-1} $ onto the $A_1$ coordinates (minus edge 5), and let $Z_1=\pi_1(Z).$ 
By contraction-deletion, one sees that  $\deg P_2= \deg Q_2 =2 h_{A_2} $, and so the fibers of $Z$ over $Z_1$ are of degree $2h_{A_2}$
in $\A^{N_{A_2}-1}$.   Let $q=p^n$ where $p$ is prime, and let $\overline{Z}, \overline{Z}_1$ denote the reductions mod $p$. 
Since $2h_{A_2} < N_{A_2} -1$, the Chevalley-Warning theorem implies that $[\overline{Z}\cap \pi_1^{-1}(x)]_q\equiv 0 \mod q$ for all $x\in \overline{Z}_1$. 
Therefore $[Z]_q = \sum_{x\in  \overline{Z}_1}ब[Z\cap \pi_1^{-1}(x)]_q\equiv 0 \mod q$. 
\end{proof}

\subsection{Vanishing of $c_2$ for non-primitive graphs}
\begin{thm} Let $G$ be a connected graph in $\phi^4$  which is overall log-divergent. If $G$ has a non-trivial divergent subgraph then 
$c_2(G)_q \equiv 0 \mod q$. 
\end{thm}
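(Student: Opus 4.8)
The plan is to reduce to the three edge-join cases already analysed. The first step is combinatorial. Since $G$ is a connected overall log-divergent graph in $\phi^4$ (so $N_G=2h_G$), all of its internal vertices are $4$-valent and $G$ carries exactly four external legs. For a non-trivial divergent subgraph $\gamma$ the number of external half-edges $E_\gamma=4|V(\gamma)|-2N_\gamma$ is even, and the divergence condition $N_\gamma\le 2h_\gamma$ is precisely the statement that the superficial degree $4-E_\gamma=4h_\gamma-2N_\gamma$ is non-negative, so $E_\gamma\in\{2,4\}$. Cutting the edges that join $\gamma$ to its complement realises $G$ as an $n$-edge join, where $n=E_\gamma-k$ and $k$ is the number of the four global external legs of $G$ absorbed into $\gamma$. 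The non-degenerate possibilities are exactly $n\in\{2,3,4\}$; the value $n=1$ gives a bridge $e$, for which $\Psi_G$ is independent of the bridge variable, whence $[X_G]_q=q\,[X_{G/e}]_q$ and $c_2(G)_q\equiv 0 \bmod q$ follows from $(\ref{1})$ applied to $G/e$.

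I would then treat the low cut sizes directly. If $G$ is a $2$-edge join it has a $2$-edge cut, across which $\Psi_G$ factorises; a computation parallel to, but simpler than, the one in Proposition \ref{c2 3 direct} shows $[X_G]\equiv 0 \bmod \Lef^3$, so $c_2(G)$ vanishes already in the Grothendieck ring. If $G$ is a $3$-edge join, Proposition \ref{c2 3 direct} applies verbatim and gives $[X_G]\equiv 0 \bmod \Lef^3$, hence $c_2(G)_q\equiv 0 \bmod q$. Neither case requires any further power-counting input.

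The remaining and genuinely delicate case is the $4$-edge join, where I would invoke Proposition \ref{c2 4} (via Theorem \ref{thmDenom}). Here the task is to verify the hypothesis $2h_{A_2}\le N_{A_2}-2$ on the convergent half. With $A_i=G_i\backslash\{1,2,3,4\}$ one has $N_G=N_{A_1}+N_{A_2}+4$ and $h_G=h_{A_1}+h_{A_2}+3$, so $N_G=2h_G$ forces $(N_{A_1}-2h_{A_1})+(N_{A_2}-2h_{A_2})=2$. A $4$-edge join arises precisely from a $4$-point subdivergence, and every $4$-point subgraph is \emph{exactly} log-divergent ($E_\gamma=4$ gives superficial degree $0$); identifying the subdivergence side with $A_1$ therefore yields $N_{A_1}=2h_{A_1}$, and hence $N_{A_2}-2h_{A_2}=2$, i.e. $2h_{A_2}=N_{A_2}-2$. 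Labelling the strictly more convergent half as $A_2$, both hypotheses $2h_G\le N_G$ and $2h_{A_2}\le N_{A_2}-2$ of Proposition \ref{c2 4} hold, and we conclude $c_2(G)_q\equiv 0 \bmod q$.

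The main obstacle is exactly this bookkeeping in the $4$-edge join: one must check that the divergent subgraph is a genuine log-divergent $4$-point insertion, so that $N_{A_1}-2h_{A_1}=0$ on the nose and the complement lands in the regime $N_{A_2}-2h_{A_2}=2$ where the Chevalley--Warning estimate of Proposition \ref{c2 4} applies. Establishing the edge-join classification carefully, in particular handling subdivergences that absorb some of the global external legs (which is what produces the odd cut size $n=3$) and identifying which side must play the role of $A_2$, is where the care is needed; once that is in place, the three propositions finish the argument.
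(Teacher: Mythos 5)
Your proposal is correct and follows essentially the same route as the paper: split according to the size of the edge join, dispose of the $3$-edge join via Proposition \ref{c2 3 direct} and the $4$-edge join via Proposition \ref{c2 4}, with the same power-counting $(N_{A_1}-2h_{A_1})+(N_{A_2}-2h_{A_2})=2$ forcing $2h_{A_2}\le N_{A_2}-2$ (the paper only uses the inequality $2h_{A_1}\ge N_{A_1}$ where you sharpen to equality, which changes nothing). The only divergence is the $2$-edge join, where the paper cites Proposition 36 of \cite{BY} ($2$-vertex reducibility implies weight drop) rather than the direct Grothendieck-ring computation you sketch; either works.
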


\begin{proof} Let $\gamma$ be a divergent subgraph of $G$. Since $\gamma\in \phi^4$,
 it has at most 4 external edges, and so $G$ can be written as a 2, 3, or 4-edge join.  In the case of a 2-edge join, $G$ is in particular 2-vertex reducible, so by 
 proposition 36 of \cite{BY}, it has weight drop. In the case of a 3-edge join, the statement follows from  proposition \ref{c2 3} or \ref{c2 3 direct}. In the case of a 4-edge join,
 apply proposition  \ref{c2 4} with $A_1=\gamma$ and $G_2=G/\gamma$. Since $2 h_G=N_G$ and $2 h_{A_1}\geq N_{A_1}$, we deduce that $2h_{A_2} \leq N_{A_2}-2$.
 In  all cases $c_2(G)_q\equiv 0 \mod q$.
\end{proof}

 \begin{remark}  If one knew the completion conjecture for $c_2$ invariants  \cite{BS}, then 
 in the previous theorem it would be enough to know that $c_2(G)$ vanishes for 2 and 3-edge joins only. 
 \end{remark} 

\subsection{Insertion of a subgraph}
If we strengthen the hypotheses in the cases of the 3 and 4-edge joins, then we can obtain stronger conclusions and also clarify what fails in the case of higher joins.

Let $G$ be an overall log-divergent $\phi^4$ graph.  Suppose $H$ is a subgraph of $G$ with $2m$ external edges.  Then $N_G=2h_G$ and $N_H = 2h_H -2 + m$.  In this case $G$ is a $k$-edge join of $G_1= G\q H$ and $G_2$ where $G_2$ is $H$ with those external edges of $H$ which became internal edges of $G$ all attached to an additional vertex.  In particular, $k \leq 2m$.  In the proposition below we will never require the valence restrictions of a $\phi^4$ graph, only the relation between the edges and cycles for $H$, and so we drop the superfluous restrictions.

\begin{prop}\label{reduce H}
Let $G$ be a $k$-edge join of $G_1$ and $G_2$, with the join edges labelled $1,\ldots, k$.  Let $H=G_2\backslash \{1,\ldots, k\}$ and let $m = N_H-2h_H + 2$.  
Suppose
  all edges of $H$ can be denominator reduced in $G$.
Let $P$ be the denominator after these reductions.  Suppose further that
  $P$ can be written in the form $\sum \pm \Phi_{G\backslash H}^R\Phi_{G\backslash H}^{R'}$ with only the vertices where $H$ is attached  involved in the partitions.
Then
\[
P = \begin{cases} 0 & \text{ if $m < 2$} \\
  \Psi_{G\q H}^2 & \text{ if $m=2$}\\
  \Psi_{G\q H}Q & \text{ if $m=3$}
  \end{cases}
\] 
for some $Q$.
\end{prop}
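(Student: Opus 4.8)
The plan is to prove all three cases at once by a single degree count, exploiting that spanning forest polynomials are homogeneous and that each step of the denominator reduction lowers the total degree by exactly one. Throughout write $h_{G\q H}=h_G-h_H$ for the loop number of the contracted graph, and let $\nu$ be the number of distinct vertices of $G\backslash H$ at which $H$ is attached; these are exactly the vertices occurring in the partitions $R,R'$, so every such partition has at most $\nu$ parts, i.e.\ $|R|\le\nu$. The whole argument will come down to combining two degree formulas with this elementary bound.

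First I would compute the degree of the building blocks. Since all spanning forests consistent with a partition into $c$ parts have the same number of edges, $\Phi^{R}_{G\backslash H}$ is homogeneous, and an Euler-characteristic computation on the connected graph obtained from $G\backslash H$ by discarding the isolated internal vertices of $H$ gives
\[
\deg \Phi^{R}_{G\backslash H}=h_{G\q H}-\nu+|R|.
\]
In particular the finest partition, in which each attaching vertex is its own part, has degree $h_{G\q H}$, and I would check that it computes precisely $\Psi_{G\q H}$: contracting $H$ identifies all attaching vertices, and a spanning tree of $G\q H$ pulls back to a spanning forest of $G\backslash H$ with each attaching vertex in its own tree, and conversely, so that $\Phi^{\{\{w_1\},\dots,\{w_\nu\}\}}_{G\backslash H}=\Psi_{G\q H}$.

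Next I would pin down $\deg P$. Assuming $N_H\ge 5$ (the small cases are already covered by Propositions \ref{c2 3}, \ref{c2 3 direct} and \ref{c2 4}), eliminating exactly the edges of $H$ gives $P=D^{N_H}_G$. Since $D^5_G$ has degree $2h_G-5$ and each reduction $D^n\mapsto D^{n+1}=\pm[f,g]_{n+1}$ lowers the degree by one, $P$ is homogeneous of degree $\deg P=2h_G-N_H=2h_{G\q H}+2-m$, using $N_H=2h_H-2+m$. Passing to the homogeneous component of degree $\deg P$ (which recovers $P$), every surviving term $\Phi^{R}_{G\backslash H}\Phi^{R'}_{G\backslash H}$ must satisfy $(h_{G\q H}-\nu+|R|)+(h_{G\q H}-\nu+|R'|)=2h_{G\q H}+2-m$, that is
\[
|R|+|R'|=2\nu+2-m.
\]
Now I split into cases using $|R|,|R'|\le\nu$. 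If $m<2$ then $|R|+|R'|\ge 2\nu+1$, which is impossible, so no term survives and $P=0$. If $m=2$ then $|R|+|R'|=2\nu$ forces $|R|=|R'|=\nu$, so both factors are the all-singletons partition and $P=\pm\Psi_{G\q H}^2$, the sign being normalised to $+$ since $D^{N_H}_G$ is only defined up to sign. If $m=3$ then $|R|+|R'|=2\nu-1$ forces $\{|R|,|R'|\}=\{\nu,\nu-1\}$, so in every term one factor is the all-singletons partition $\Psi_{G\q H}$; factoring it out yields $P=\Psi_{G\q H}\,Q$.

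The main obstacle is not the case analysis but justifying the two degree inputs cleanly. The delicate points are: (i) the degree formula for $\Phi^{R}_{G\backslash H}$ when $H$ has internal (non-attaching) vertices, which make the naive spanning forest polynomials vanish and force me to work on the right connected graph and to identify $\nu$ correctly; and (ii) the claim that each reduction step strictly lowers the degree, which presupposes that the leading forms of the factors $f,g$ do not cancel, so that $D^{N_H}_G$ really has degree $2h_G-N_H$. Once these are secured, the trichotomy follows purely from homogeneity and the bound $|R|\le\nu$.
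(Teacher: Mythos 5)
Your proof is correct and follows essentially the same route as the paper's: both arguments combine the fact that $\deg P = 2h_G-N_H = 2h_{G\q H}+2-m$ with the degree formula $\deg\Phi^R_{G\backslash H} = h_{G\q H}-(\text{max parts})+|R|$ and the observation that the unique finest partition yields $\Psi_{G\q H}$, then split into cases. Your only deviation is the (slightly more careful) use of $\nu$, the number of distinct attaching vertices, where the paper uses the number $k$ of join edges; since $\nu\le k$ and the maximal degree $h_{G\q H}$ comes out the same either way, this does not change the argument.
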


Before proving this result let us consider it briefly.  From the preceding discussion we see that if we are in $\phi^4$ and $H$ is a vertex subdivergence of $G$, then we have $m=2$ and $k=3$ or $4$ in the proposition.  Thus with the hypotheses of the proposition we conclude that $P = \Psi_{G\q H}^2$, and hence in this case we have another way to see that the $c_2$ invariant is zero.

The hypothesis on $P$ deserves further explanation.  If the denominator one step before $P$ was expressible as a product of two Dodgson polynomials then $P$ will be a difference of products of pairs of Dodgson polynomials, and since every Dodgson polynomial can be written as a signed sum of spanning forest polynomials, we get the desired hypothesis on $P$.  

The proof of the proposition is a degree counting exercise.
\begin{proof}
Any 5-invariant in $G$ has degree $2h_G-5$ and each subsequent denominator reduction  decreases the degree of the denominator by $1$, so 
\[
\deg P = 2h_G - N_H = 2(h_{G\q H}+h_H) - N_H = 2h_{G\q H} + 2 - m
\] 
$\Psi_{G\backslash H}$ has degree $h_{G\q H}-k+1$.  Thus a spanning forest of $G\backslash H$ with $i$ trees has degree $h_{G\q H}-k+i$.
A partition involving only the vertices where $H$ is attached has at most $k$ parts.
Thus the maximum degree of a spanning forest polynomial associated to such a partition is $h_{G\q H}$.  

If $m<2$ then $P$ has degree at least $2h_{G\q H}+1$, but the maximum degree of a product of two spanning forest polynomials of the desired form is $2h_{G\q H}$, so $P=0$.

If $m=2$ then $P$ has degree $2h_{G\q H}$.  Thus $P$ is a sum of product of pairs of spanning forest polynomials each with $k$ trees.  But there is only one spanning forest polynomial with $k$ trees and $k$ vertices in the partition: each vertex is in a different part.  Furthermore this spanning forest polynomial is the same as the spanning forest polynomial with one part when all $k$ vertices are identified.  But $G\backslash H$ with the vertices where $H$ is connected identified is exactly $G\q H$.  Thus $P=\Psi_{G\q H}^2$.

If $m=3$ then $P$ has degree $2h_{G\q H}-1$.  This means that each term of $P$ is a product of a spanning forest polynomial with $k$ trees and one with $k-1$ trees.  But as shown in the previous paragraph the only spanning forest of the desired form with $k$ trees is $\Psi_{G\q H}$.  Thus we can factor out $\Psi_{G\q H}$ and we obtain $P = \Psi_{G\q H}Q$ where $Q$ is a linear combination of spanning forest polynomials with $k-1$ trees.
\end{proof}

Something similar happens if we reduce the outer graph rather than the inserted graph.  For insertions of $\phi^4$ primitive graphs into primitive graphs this would be the case $m=3$ and $k=3$ or $m=4$ and $k=4$.
\begin{cor}
Using the notation of proposition \ref{reduce H}, assume we can additionally reduce the $k$ edges of the join.  Let $\widetilde{P}$ be the resulting denominator and assume $\widetilde{P}$ satisfies the property satisfied by $P$ in proposition \ref{reduce H}.  Then
$$
  \widetilde{P} = \begin{cases}
    \Psi_{G\backslash G_2}^2 & \text{ if $m=k$}\\
    \Psi_{G\backslash G_2}\widetilde{Q} & \text{ if $m = k-1$} \end{cases}
$$
for some $Q$.
\end{cor}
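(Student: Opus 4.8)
The plan is to mirror the degree-counting argument in the proof of proposition \ref{reduce H}, but applied now to the elimination of all of $G_2$ rather than of $H$ alone. The starting observation is that the entire denominator reduction preserves homogeneity: the five-invariant is homogeneous, and if a homogeneous $D^n$ factors into homogeneous $f,g$ of degree $\le 1$ in $x_{n+1}$, then $[f,g]_{n+1}=f^{n+1}g_{n+1}-f_{n+1}g^{n+1}$ is again homogeneous, both monomials having degree $\deg f+\deg g-1$. Hence $\widetilde P$ is homogeneous. Since the five-invariant has degree $2h_G-5$ and the denominator drops by one at each further step, after eliminating all $N_H$ edges of $H$ and the $k$ join edges we obtain $\deg\widetilde P = 2h_G-N_H-k$.

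Next I would translate this degree into the geometry of $G\backslash G_2$. Reducing the $k$ join edges in addition to $H$ amounts to eliminating every edge of $G_2$, so --- exactly as the contraction $G\q H$ arose in proposition \ref{reduce H} from deleting the edges of $H$ --- here the relevant object is the deletion $G\backslash G_2\cong G_1\backslash v_1$, with $v_1$ the distinguished join vertex, and the partitions in the hypothesised expansion $\widetilde P=\sum\pm\Phi_{G\backslash G_2}^R\Phi_{G\backslash G_2}^{R'}$ are supported on the $k$ vertices of $G\backslash G_2$ at which the join edges met $G_1$. A spanning-forest polynomial $\Phi_{G\backslash G_2}^R$ with $|R|$ blocks is homogeneous of degree $h_{G\backslash G_2}-1+|R|$, with $1\le|R|\le k$. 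Using the Euler-characteristic relation for the $k$-edge join, $h_G=h_{G\backslash G_2}+h_H+k-1$, together with $m=N_H-2h_H+2$, the degree computed above rewrites as $\deg\widetilde P=2h_{G\backslash G_2}+k-m$.

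Now I would match degrees. A product $\Phi_{G\backslash G_2}^R\Phi_{G\backslash G_2}^{R'}$ has degree $2h_{G\backslash G_2}-2+|R|+|R'|$, so by homogeneity only the terms with $|R|+|R'|=k-m+2$ can survive, all others cancelling. For $m=k$ this forces $|R|=|R'|=1$; there is a single one-block partition of the $k$ attaching vertices, and its spanning-forest polynomial is exactly the spanning-tree polynomial $\Psi_{G\backslash G_2}$, whence $\widetilde P$ equals its degree-$2h_{G\backslash G_2}$ part, which is a scalar multiple of $\Psi_{G\backslash G_2}^2$. For $m=k-1$ the surviving products have $\{|R|,|R'|\}=\{1,2\}$, so each is $\pm\Psi_{G\backslash G_2}$ times a two-block forest polynomial; factoring out the common $\Psi_{G\backslash G_2}$ yields $\widetilde P=\Psi_{G\backslash G_2}\widetilde Q$, with $\widetilde Q$ the signed sum of the two-block polynomials, of degree $h_{G\backslash G_2}+1$.

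The main obstacle I anticipate is conceptual rather than computational: one must correctly recognise that eliminating the join edges converts the contraction $G\q H$ of proposition \ref{reduce H} into the deletion $G\backslash G_2$, and that the partitions migrate from the attaching vertices on the $G_2$ side to those on the $G_1$ side. Once this is pinned down the Euler-characteristic bookkeeping relating $\deg\widetilde P$ to $h_{G\backslash G_2}$ and the homogeneity argument are routine. The only remaining delicate point is fixing the scalar in the $m=k$ case to be exactly $1$ rather than a general nonzero constant; this is settled by comparing the unique leading spanning-forest term, precisely as in the proof of proposition \ref{reduce H}.
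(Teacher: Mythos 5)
Your argument is correct and follows essentially the same route as the paper's: both proofs compute $\deg\widetilde{P}=\deg P-k$ by the one-degree-per-reduction rule and then observe that the spanning-tree polynomial of the remaining graph is the unique spanning forest polynomial of minimal degree, so that for $m=k$ both factors in each surviving product are forced to be $\Psi_{G\backslash G_2}$ and for $m=k-1$ one factor is. Your version merely spells out the homogeneity of the reduction and the Euler-characteristic bookkeeping ($h_{G\q H}-k+1=h_{G\backslash G_2}$) that the paper leaves implicit.
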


\begin{proof}
  Begin as in the proof of proposition \ref{reduce H}.  Then 
  \[
  \deg \widetilde{P} = \deg P - k = 2h_{G\q H} + 2 - m -k.
  \]
  $\Psi_{G\backslash H}$ has degree $h_{G\q H}-k+1$.  This is the unique spanning forest polynomial of $G \backslash H$ of this degree and no such spanning forest polynomial can have smaller degree.  The result follows.
\end{proof}

\section{Denominator identities and $c_2$} 

Given that denominator reduction computes the $c_2$ invariant
it is natural to ask how the $c_2$ invariant relates to identities between denominators.  The double triangle identity \cite{BY} is also an identity of $c_2$ invariants, and is a major tool to predict the weight of Feynman graphs.  For denominator identities with more than two terms the situation is more subtle.  Two important such identities are the STU-type identity coming from splitting a 4-valent vertex, and the 4 term relation.

\subsection{An identity for 4-valent vertices} 
Let $G$ be a graph containing a 4-valent vertex with vertices $1,2,3,4$ pictured below on the left, where the white vertices denote vertices which are connected to the rest of the graph.  Resolve the 4-valent vertex  into three smaller graphs as shown:

\begin{center}
\fcolorbox{white}{white}{
  \begin{picture}(362,72) (432,-60)
    \SetWidth{1.0}
    \SetColor{Black}
    \COval(634,-12)(2,2)(0){Black}{White}
    \COval(664,18)(2,2)(0){Black}{White}
    \COval(664,-42)(2,2)(0){Black}{White}
    \COval(693,-12)(2,2)(0){Black}{White}
    \COval(553,-12)(2,2)(0){Black}{White}
    \COval(582,18)(2,2)(0){Black}{White}
    \COval(582,-42)(2,2)(0){Black}{White}
    \COval(612,-12)(2,2)(0){Black}{White}
    \Line(553,-12)(583,18)
    \Line(582,-42)(612,-12)
    \COval(715,-12)(2,2)(0){Black}{White}
    \COval(745,18)(2,2)(0){Black}{White}
    \COval(745,-42)(2,2)(0){Black}{White}
    \COval(775,-12)(2,2)(0){Black}{White}
    \COval(553,-12)(2,2)(0){Black}{White}
    \COval(582,18)(2,2)(0){Black}{White}
    \COval(582,-42)(2,2)(0){Black}{White}
    \COval(612,-12)(2,2)(0){Black}{White}
    \Line(634,-12)(693,-12)
    \Line(664,18)(664,-41)
    \Line(745,18)(775,-12)
    \Line(715,-12)(745,-42)
    \Vertex(465,-12){2}
    \COval(435,-12)(2,2)(0){Black}{White}
    \COval(465,18)(2,2)(0){Black}{White}
    \Line(465,18)(465,-41)
    \Line(435,-12)(494,-12)
    \COval(465,-42)(2,2)(0){Black}{White}
    \COval(494,-12)(2,2)(0){Black}{White}
    \Text(459,2)[lb]{\small{\Black{$1$}}}
    \Text(479,-10)[lb]{\small{\Black{$2$}}}
    \Text(470,-32)[lb]{\small{\Black{$3$}}}
    \Text(450,-10)[lb]{\small{\Black{$4$}}}
      \Text(459,20)[lb]{\tiny{\Black{$1$}}}
    \Text(499,-10)[lb]{\tiny{\Black{$2$}}}
    \Text(470,-48)[lb]{\tiny{\Black{$3$}}}
    \Text(430,-10)[lb]{\tiny{\Black{$4$}}}
    \Text(435,-55)[lb]{{\Black{$G$}}}
    \Text(562,4)[lb]{{\Black{$e$}}}
    \Text(592,-28)[lb]{{\Black{$f$}}}
    \Text(538,-58)[lb]{{\Black{$G_{\{14,23\}}$}}}
 \Text(620,-58)[lb]{{\Black{$G_{\{13,24\}}$}}}
 \Text(700,-58)[lb]{{\Black{$G_{\{12,34\}}$}}}
    \Text(671,2)[lb]{{\Black{$e$}}}
    \Text(648,-24)[lb]{{\Black{$f$}}}
    \Text(764,6)[lb]{{\Black{$e$}}}
    \Text(735,-26)[lb]{{\Black{$f$}}}
  \end{picture}
}
\end{center}

There exist three spanning forest polynomials $A,B,C$ such that
$$\Psi_{G_{\{14,23\}}^{e,f}}=\pm( A-B) \ , \qquad  \Psi_{G_{\{13,24\}}^{e,f}}=\pm(A-C) \ ,   \qquad  \Psi_{G_{\{12,34\}}^{e,f}}=\pm(B-C), $$
where $A,B,C\in \ZZ[x_5,\ldots, x_{N_G}]$ (by \cite{BY}, Example 13). Specifically, 
$$A= \Phi_G^{\{1,2\}, \{3,4\} }  \ , \  B= \Phi_G^{\{1,3\}, \{2,4\} }  \ ,\  C= \Phi_G^{\{1,4\}, \{2,3\} } \ . $$
\begin{lem} Consider a fifth edge $5$ in $G$. Then
$$\pm^5\Psi_G(1,2,3,4,5) =[A,B ]_{x_5} +[B,C]_{x_5} + [C,A]_{x_5} $$
\end{lem}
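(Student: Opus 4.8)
The plan is to reduce the five-invariant to a resultant of two Dodgson polynomials of $G$, rewrite those Dodgson polynomials in terms of $A,B,C$, and then expand the resultant using its bilinearity. By Definition \ref{Def5inv} we have ${}^5\Psi_G(1,2,3,4,5)=\pm[\Psi^{12,34},\Psi^{13,24}]_{x_5}$, so everything hinges on understanding $\Psi^{12,34}$ and $\Psi^{13,24}$.

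First I would record a cheap structural observation: the variable $x_i$ (for $i=1,2,3,4$) occurs in $M_G$ only in the diagonal entry indexed by edge $i$, and in each of $\Psi^{12,34}$, $\Psi^{13,24}$, $\Psi^{14,23}$ exactly one of the row or column indexed by $i$ has been deleted. Hence none of these Dodgson polynomials depends on $x_1,x_2,x_3,x_4$; being minors they are moreover multilinear in $x_5,\dots,x_{N_G}$, so each has degree $\le 1$ in $x_5$ and the resultants below are well defined. The key step is then the identification of these Dodgson polynomials with the spanning forest polynomials $A,B,C$. This can be done either by expanding via Proposition \ref{exists} applied to the star of the four-valent vertex, or by identifying $\Psi^{ij,kl}_G$ with the Dodgson polynomial $\Psi_{G_{\{ij,kl\}}^{e,f}}$ of the resolved graph and invoking the relations recalled above from \cite{BY}, Example 13. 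Either way one obtains, for a single consistent choice of overall sign,
\[
\Psi^{12,34}=\pm(B-C),\qquad \Psi^{13,24}=\pm(A-C),\qquad \Psi^{14,23}=\pm(A-B).
\]
These are mutually consistent, since their alternating sum $(B-C)-(A-C)+(A-B)$ vanishes, matching the Plücker identity $\Psi^{12,34}-\Psi^{13,24}+\Psi^{14,23}=0$, which I would use as a check on the signs.

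With the identification in hand the rest is immediate. The resultant $[\,\cdot\,,\,\cdot\,]_{x_5}$ is bilinear and antisymmetric, with $[f,f]_{x_5}=0$, so
\begin{align*}
[\Psi^{12,34},\Psi^{13,24}]_{x_5}
&=\pm[B-C,\,A-C]_{x_5}\\
&=\pm\bigl([B,A]_{x_5}-[B,C]_{x_5}-[C,A]_{x_5}+[C,C]_{x_5}\bigr)\\
&=\mp\bigl([A,B]_{x_5}+[B,C]_{x_5}+[C,A]_{x_5}\bigr),
\end{align*}
using $[C,C]_{x_5}=0$ and $[B,A]_{x_5}=-[A,B]_{x_5}$. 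Combining with ${}^5\Psi_G(1,2,3,4,5)=\pm[\Psi^{12,34},\Psi^{13,24}]_{x_5}$ and absorbing the global sign yields
\[
\pm\,{}^5\Psi_G(1,2,3,4,5)=[A,B]_{x_5}+[B,C]_{x_5}+[C,A]_{x_5},
\]
as claimed.

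The final bilinear expansion is routine; the real work, and the step I expect to be the main obstacle, is the sign-consistent identification of $\Psi^{12,34}$, $\Psi^{13,24}$, $\Psi^{14,23}$ with $B-C$, $A-C$, $A-B$. This requires the spanning forest analysis of the four-valent vertex (which forests survive the two contraction--deletions in Proposition \ref{exists}, and the cancellation of the $x_1,\dots,x_4$ contributions) together with careful sign bookkeeping from the determinant formula $(\ref{detmatrices})$. Since the overall sign of a five-invariant is only defined up to $\pm$, it suffices to pin down these three signs relative to one another, and the Plücker relation provides a convenient consistency check once the individual identifications are in place.
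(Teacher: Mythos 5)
Your proposal is correct and follows essentially the same route as the paper: identify $\Psi_G^{12,34}$ and $\Psi_G^{13,24}$ with $\pm(B-C)$ and $\pm(A-C)$ via the resolved graphs of \cite{BY}, Example 13, then expand $[B-C,A-C]_{x_5}$ by bilinearity and antisymmetry of the resultant. The paper's proof is just a terser version of yours (it cites $\Psi_G^{p}=\pm\Psi_{G_p}^{i,j}$ and ``linearity of the resultant''), so the extra sign bookkeeping you flag is welcome but not a divergence in method.
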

\begin{proof} For any partition $p$ of $\{1,2,3,4\}$ into two sets, 
$\Psi_G^{p} = \pm \Psi_{G_{p}}^{i,j}$. One of the many definitions of the five-invariant is:
$$\pm^5\Psi_G(1,2,3,4,5)=\pm[\Psi_G^{12,34}, \Psi_G^{13,24}]_{x_5}=\pm[B-C, A-C]_{x_5}\ .$$
The result follows by linearity of the resultant.
\end{proof}

This is not a typical denominator identity since it uses the decomposition into $A$, $B$, and $C$.  It becomes a true denominator identity when edge $5$ forms a triangle with $1$ and $2$.  In this case $G_{\{12,34\}}$ has a double edge which gives a denominator of $0$ when those edges are reduced and so only two terms remain on the right hand side.  
Specifically, we get the following proposition.
\begin{prop}\label{split4t}Let $G$ be as illustrated above and let edge $5$ form a triangle with edges $1$ and $2$.
  Choose any $6$th edge from $G$, then
  \[
    D^6_G(1,2,3,4,5,6) =  \pm D^4_{G_{\{14,23\}}}(5,e,f,6)  \pm D^4_{G_{\{13,24\}}}(5,e,f,6)
  \]
where 
\[
  D_G^4(i,j,k,l) = \pm  \Psi^{ij,kl}_G\Psi^{ik,jl}_G
\]
which depends on the order of the arguments.
\end{prop}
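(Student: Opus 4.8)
The plan is to obtain $D^6_G$ by performing one further denominator reduction starting from the five-invariant of $G$, which the preceding lemma has already expressed through the spanning forest polynomials $A=\Phi_G^{\{1,2\},\{3,4\}}$, $B=\Phi_G^{\{1,3\},\{2,4\}}$, $C=\Phi_G^{\{1,4\},\{2,3\}}$. Thus I would begin from
$$D^5_G(1,2,3,4,5)=\pm{}^5\Psi_G(1,2,3,4,5)=\pm\bigl([A,B]_{x_5}+[B,C]_{x_5}+[C,A]_{x_5}\bigr),$$
and use the triangle hypothesis to collapse this into a product that can be reduced in a sixth variable $x_6$.

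The first genuine step is to record the effect of the triangle $\{1,2,5\}$. Writing $v_1,v_2$ for the endpoints of edges $1,2$ opposite the four-valent vertex, edge $5$ joins $v_1$ and $v_2$; since $v_1,v_2$ lie in different parts of the partitions defining $B$ and $C$, no spanning forest contributing to $B$ or to $C$ can contain edge $5$, so $x_5\mid B$ and $x_5\mid C$, i.e.\ $B_5=C_5=0$ in the notation $P=P^5x_5+P_5$. Substituting into the three resultants gives $[A,B]_{x_5}=-A_5B^5$, $[C,A]_{x_5}=A_5C^5$ and, crucially, $[B,C]_{x_5}=0$. The vanishing of this middle term is precisely the double edge in $G_{\{12,34\}}$: indeed $\Psi^{5e,f6}_{G_{\{12,34\}}}=0$ because $\{5,e\}$ contains a cycle, so by the vanishing property of \S\ref{subsectDodg} one has $D^4_{G_{\{12,34\}}}(5,e,f,6)=0$. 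Consequently
$$D^5_G(1,2,3,4,5)=\pm A_5\,(C^5-B^5),$$
a product of two multilinear polynomials, each of degree $\le 1$ in $x_6$; hence $D^6_G$ is defined and, by bilinearity of the resultant,
$$D^6_G(1,2,3,4,5,6)=\pm[A_5,\,C^5-B^5]_{x_6}=\pm\bigl([A_5,C^5]_{x_6}-[A_5,B^5]_{x_6}\bigr).$$

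It then remains to identify the two surviving resultants with the four-invariants of the two non-degenerate resolutions. Using $B_5=C_5=0$ and contraction-deletion I would rewrite the factors as Dodgson polynomials of the resolved graphs, for instance $A_5=(A-B)|_{x_5=0}=\pm\Psi^{e,f}_{G_{\{14,23\}}\q 5}$ and likewise $A_5=(A-C)|_{x_5=0}=\pm\Psi^{e,f}_{G_{\{13,24\}}\q 5}$, together with $A^5-B^5=\pm\Psi^{5e,5f}_{G_{\{14,23\}}}$ and $A^5-C^5=\pm\Psi^{5e,5f}_{G_{\{13,24\}}}$. Expanding the factors into spanning forest polynomials in the manner of Lemma \ref{lem4EJ} and regrouping, using that the three resolutions agree once the split edges $e,f$ are removed, the aim is to recognise $[A_5,B^5]_{x_6}=\pm\Psi^{5e,f6}_{G_{\{14,23\}}}\Psi^{5f,e6}_{G_{\{14,23\}}}=\pm D^4_{G_{\{14,23\}}}(5,e,f,6)$ and $[A_5,C^5]_{x_6}=\pm D^4_{G_{\{13,24\}}}(5,e,f,6)$, which assembles into the stated identity.

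The divisibility and factorisation steps are immediate once the triangle is used, and the vanishing of the $G_{\{12,34\}}$ term is a one-line application of the vanishing property, so I expect the main obstacle to be this final identification. The difficulty is that $A_5,B^5,C^5$ are not individually Dodgson polynomials of a single resolved graph — only their differences are — so matching a single resultant against a product of two Dodgson polynomials of one prescribed resolution requires a careful spanning forest expansion that keeps track of exactly which split edges $e,f$ and which vertex partitions occur, together with a consistent choice of the many signs implicit in the $\Psi^{I,J}$ and in the definitions of $D^4$, $D^5$ and $D^6$.
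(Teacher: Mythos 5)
Your setup follows the paper's route exactly: the lemma expressing ${}^5\Psi_G$ through $A,B,C$, the observation that $B_5=C_5=0$ because edge $5$ joins vertices lying in different parts of the partitions defining $B$ and $C$, the resulting factorization $D^5_G=\pm A_5(C^5-B^5)$, and the reduction $D^6_G=\pm([A_5,C^5]_{x_6}-[A_5,B^5]_{x_6})$. The gap is in the final identification, and the sub-claim you aim at there is in fact false: $[A_5,B^5]_{x_6}$ is \emph{not} in general equal to $\pm\Psi^{5e,f6}_{G_{\{14,23\}}}\Psi^{5f,e6}_{G_{\{14,23\}}}$. Indeed, since $B_5=0$ and the resultant is bilinear,
\[
[(A-B)_5,(A-B)^5]_{x_6}=[A_5,A^5-B^5]_{x_6}=[A_5,A^5]_{x_6}-[A_5,B^5]_{x_6},
\]
and the cross-term $[A_5,A^5]_{x_6}$ has no reason to vanish: $A$ by itself is not a Dodgson polynomial of any of the resolved graphs, so no Dodgson identity controls it. A spanning-forest expansion aimed at proving the term-by-term identity would therefore not close.

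What saves the argument---and is the paper's actual finish---is that this cross-term appears in both summands and cancels in the difference:
\[
[A_5,C^5]_{x_6}-[A_5,B^5]_{x_6}=[(A-B)_5,(A-B)^5]_{x_6}-[(A-C)_5,(A-C)^5]_{x_6}.
\]
Now $A-B=\pm\Psi^{e,f}_{G_{\{14,23\}}}$ and $A-C=\pm\Psi^{e,f}_{G_{\{13,24\}}}$ \emph{are} Dodgson polynomials of the resolved graphs, so the Dodgson identity $[\Psi^{i,j}_k,\Psi^{ik,jk}]_{x_l}=\Psi^{il,jk}\Psi^{ik,jl}$ (with $i=e$, $j=f$, $k=5$, $l=6$) converts each bracket directly into the corresponding $D^4(5,e,f,6)$, up to the signs absorbed in the statement. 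No spanning-forest bookkeeping in the manner of Lemma \ref{lem4EJ} is needed; the step you flagged as the main obstacle is a two-line application of bilinearity plus the Dodgson identity, but it must be applied to the difference, not to each resultant separately.
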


\begin{proof} Let $A,B,C$ be defined as above. In the quotient $G \q 5$, the vertices $1$ and $2$ are identified, which implies that 
$B$ and $C$ vanish at $x_5=0$, by contraction-deletion. By the previous lemma
  \begin{eqnarray}
   \pm {}^5\Psi_G(1,2,3,4,5)  &= & [A,B]_{x_5} +  [B,C]_{x_5}+ [C,A]_{x_5} \nonumber \\
     &= & (C^5 - B^5)A_5 \nonumber 
  \end{eqnarray}
  where we write $A=A^5 x_5+ A_5$, and so on, as usual. Then, using the fact that $B_5=C_5=0$, we deduce that
  \begin{eqnarray}
   \pm {}^6\Psi_G(1,2,3,4,5,6)  &= & [A_5,C^5]_{x_6} -  [A_5,B^5]_{x_6} \nonumber \\
     &= &  [ A_5-C_5, A^5-C^5]_{x_6} -[A_5-B_5, A^5-B^5 ]_{x_6}  \nonumber 
  \end{eqnarray}
  By the Dodgson identity, it is true for any graph polynomial $\Psi$ that 
  $$[\Psi^{i,j}_k, \Psi^{ik,jk}]_{x_{l}} = \Psi^{il,jk} \Psi^{ik,jl}\ .$$
  Writing $A-C= \Psi_{G_{\{13,24\}}}^{i,j}$ and $A-B= \Psi_{G_{\{14,23\}}}^{i,j}$, we obtain the statement of the proposition.
  \end{proof}

If we fix the signs in the $D_G^4$ by defining $D_G^4 = \Psi^{ij,kl}_G\Psi^{ik,jl}_G$ then, following the signs through the above proof, we obtain
\[
D^6_G(1,2,3,4,5,6) =  \pm \left(D^4_{G_{\{14,23\}}}(5,e,f,6) - D^4_{G_{\{13,24\}}}(5,e,f,6)\right)
\]

\begin{remark}
The preceding proposition implies the double triangle identity from \cite{BY}.
It also explains the ad hoc identities from subsection 4.6 of \cite{BY} if one also keeps track of the signs from the proof of the proposition.
For example, using the notation of that paper, if we apply proposition \ref{split4t} to the middle left vertex of $8_a$ then we obtain (with signs) $ 6_2-   6_3$ giving the polynomial $(xy+yz+xz)-xz = y(x+z)$. Applying the proposition to the top left vertex of $8_b$ we obtain two permutations of $6_3$ giving the polynomial $yz + xy$.  

Likewise, applying proposition \ref{split4t} twice to $10_b$ gives the three different permutations of $6_3$ and so correctly computes $\rho(10_b)$.  Consequently, these types of identities are no longer ad hoc, but come from splitting 4-valent vertices.  
\end{remark}

\subsection{4-term relation}

One very important relation in mathematics \cite{BNvass}, which is also found in quantum field theory \cite{bk4tr}, is the 4-term relation.  The $c_2$ invariant does not satisfy this relation, but it is nonetheless a true identity of denominators.

Let
\[
G_1 = \raisebox{-1.6cm}{\includegraphics{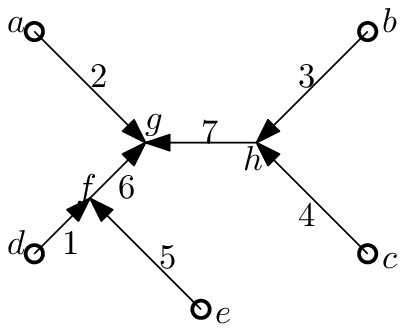}} \quad 
G_2 = \raisebox{-1.6cm}{\includegraphics{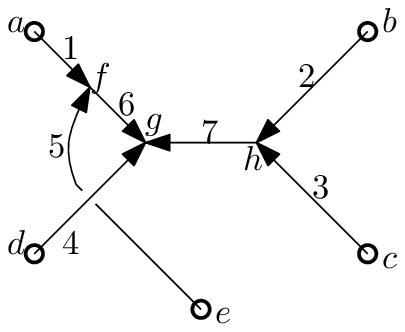}}
\]
\[
G_3 = \raisebox{-1.6cm}{\includegraphics{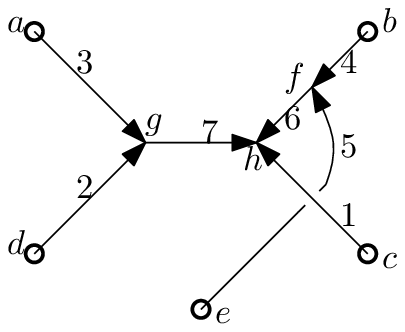}} \quad 
G_4 = \raisebox{-1.6cm}{\includegraphics{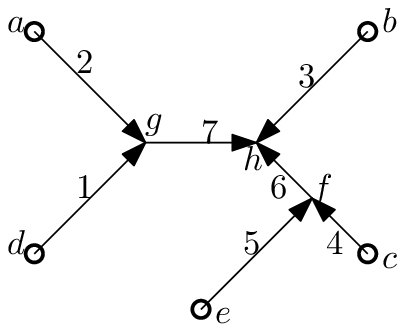}}
\]
each with the same external graph attached to the white vertices. 

\begin{thm}\label{thm 4tr}
\[
\pm   D^7_{G_1}  \pm  D^7_{G_2}  \pm D^7_{G_3}  \pm D^7_{G_4} = 0
\]
\end{thm}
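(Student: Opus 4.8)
The plan is to compute each $D^7_{G_i}$ directly by carrying out the denominator reduction on the seven edges that lie in the local gadget where the four graphs differ, and then to express the resulting polynomial as a signed sum of products of spanning forest polynomials $\Phi_\Gamma^P$ of the common external graph $\Gamma$ (the part attached to the white vertices), the partitions $P$ ranging over the attachment vertices. This is precisely the strategy behind the proof of Lemma \ref{lem4EJ} and Proposition \ref{split4t}: once all the internal edges of a local subgraph have been eliminated, the surviving denominator is a polynomial in the external variables whose monomials are governed by Proposition \ref{exists}, and so it decomposes canonically into spanning forest polynomials of $\Gamma$ indexed by colourings of the boundary vertices.

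First I would fix an edge ordering in which the seven eliminated edges are exactly those of the differing gadget, and check that the denominator reduction is in fact defined at each of the seven steps. As in Lemma \ref{lem4EJ}, this amounts to verifying that the relevant Dodgson polynomials factor or vanish at each stage. The requisite vanishings follow from the vanishing rules of \S\ref{subsectDodg}, item (4), combined with the spanning forest criterion of Proposition \ref{exists}: a polynomial $\Psi^{I,J}$ is zero exactly when no spanning forest of the prescribed type exists, which happens whenever the colouring forced on the boundary vertices by the deletions is incompatible with the connectivity of the gadget. It is this same mechanism that produced the identities $\Psi^{1236,1245}=\Psi^{1235,1246}=0$ in Lemma \ref{lem4EJ}.

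Next, for each $G_i$ I would read off the spanning forest expansion of $D^7_{G_i}$. Because the four diagrams differ only in how the gadget edges attach to the white vertices, the four expansions will involve the \emph{same} spanning forest polynomials $\Phi_\Gamma^P$ of $\Gamma$, but assembled with different partitions of the attachment vertices; the intermediate quantities will again be differences of spanning forest polynomials of the $A^{m,n}_t$ type appearing in Lemma \ref{lem4EJ}, now permuted according to the local $4$-term move. Summing the four with the correct choice of signs, the alternating combination $\pm D^7_{G_1}\pm D^7_{G_2}\pm D^7_{G_3}\pm D^7_{G_4}$ should collapse to a single Grassmann--Plücker-type syzygy among spanning forest polynomials, of exactly the flavour of the identity $A^{a,b}_tA^{c,d}_t - A^{a,c}_tA^{b,d}_t + A^{a,d}_tA^{b,c}_t = 0$ used at the close of Lemma \ref{lem4EJ}, which one verifies termwise by expanding each $\Phi_\Gamma^P$ over forests.

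The main obstacle is the bookkeeping rather than any deep new idea. The polynomials $D^n_G$ are only defined up to sign, and each resultant step together with each application of the Dodgson and Plücker identities contributes its own sign, so tracking the four signs through seven reduction steps — and confirming that the four gadget configurations yield precisely the partitions entering the vanishing syzygy — is the delicate part. I expect the underlying algebraic identity to be clean and forced by a Plücker relation, but matching it to the correctly signed combination of the four diagrams, and ruling out the possibility that the naive $4$-term signs fail to align (which would instead produce a nonzero remainder), is where the real work lies.
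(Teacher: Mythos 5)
Your overall strategy -- eliminate the seven gadget edges, express what survives in terms of data attached to the common graph, and look for a syzygy -- is indeed the shape of the paper's argument, and your remarks about which vanishings make the reduction go through (3-valence of the internal vertices, via the vanishing rules of \S\ref{subsectDodg}) are on target. But the cancellation mechanism you predict is the wrong one, and this is a genuine gap rather than a bookkeeping issue. You propose that the four-fold sum should collapse via a \emph{quadratic three-term} Grassmann--Pl\"ucker relation of the type $A^{a,b}A^{c,d}-A^{a,c}A^{b,d}+A^{a,d}A^{b,c}=0$ used in Lemma \ref{lem4EJ}. A three-term quadratic identity cannot by itself account for a four-term linear cancellation among four graphs, and in fact the actual proof uses something structurally different: after the seven reductions, one of the two terms in each $D^7_{G_i}$ dies (because the vertex $h$ is 3-valent), so each $D^7_{G_i}$ is a clean \emph{product} of a Dodgson polynomial of a single auxiliary graph $H$ (common to all four $G_i$) with one and the same spanning forest polynomial $\Phi^{\{a,b,c,d,e\}}_{H\backslash 1,\ldots,6}$. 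Factoring out that common factor, the four remaining Dodgson polynomials are $\Psi^{234,156}_H$, $\Psi^{134,256}_H$, $\Psi^{124,356}_H$, $\Psi^{123,456}_H$, and their alternating sum vanishes by the \emph{linear four-term} Pl\"ucker identity for $|I|=|J|=3$ listed in \S\ref{subsectDodg}, item (3).

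Two ideas are therefore missing from your plan: (i) the recognition that the four reduced denominators share a common spanning forest factor, so that the identity to be proved is linear in Dodgson polynomials of one fixed graph $H$ rather than a relation among products of $A^{m,n}$-type differences; and (ii) the identification of the relevant syzygy as the degree-three linear Pl\"ucker relation, not the quadratic one. Without (i) you have no single graph $H$ on which to write a relation at all, and without (ii) the alternating four-term structure of the statement has no algebraic counterpart to cancel against. Your concern about signs is legitimate but secondary; the paper resolves it exactly as you anticipate, by fixing conventions and checking that the Pl\"ucker signs align with the alternating signs of the four diagrams.
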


\begin{proof}
  Beginning with $G_1$ calculate
  \[
\pm  {}^5\Psi_{G_1}(1,2,3,4,5) = \Psi^{12,34}_{G_1,5}\Psi^{145,235}_{G_1} - \Psi^{1
4,23}_{G_1,5}\Psi^{125,345}_{G_1}
  \]
  Since only edges $1$, $5$, and $6$ are adjacent to vertex $f$, reducing by edge $6$ gives
  \[
 \pm D^6_{G_1}(1,2,3,4,5,6) = \Psi^{126,346}_{G_1,5}\Psi^{145,235}_{G_1,6} - \Psi^{146,236}_{G_1,5}\Psi^{125,345}_{G_1,6}
  \]
  Since only edges $2$, $6$, and $7$ are adjacent to vertex $g$, reducing by edge $7$ gives
  \begin{align*}
 \pm D^7_{G_1}(1,2,3,4,5,6,7) & = \Psi^{126,346}_{G_1,57}\Psi^{1457,2357}_{G_1,6} - \Psi^{146,236}_{G_1,57}\Psi^{1257,3457}_{G_1,6} \\
  & = \Psi^{126,346}_{G_1,57}\Psi^{1457,2357}_{G_1,6}
  \end{align*}
  since $\Psi^{1257,3457}_{G_1,6}=0$ by the vanishing property \S \ref{subsectDodg} (4)  as $h$ is 3-valent.
 
  Let
  \[
    H = \raisebox{-1.6cm}{\includegraphics{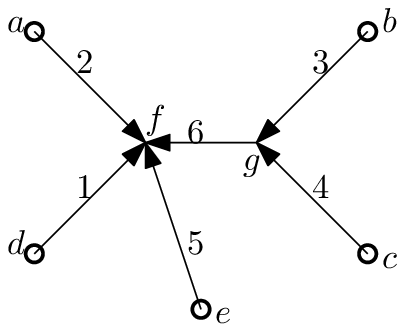}}
  \]
  Note that $\Psi^{1457,2357}_{G_1,6} = \pm \Phi^{\{a,b,c,d,e\}}_{H \backslash 1,2,3,4,5,6}$ since both correspond to the same spanning forest polynomials.  
  
  Furthermore, $\Psi^{126,346}_{G_1,57} = \pm \Psi^{156,234}_H$ since both correspond  up to a sign to \[\Phi^{\{b,d\},\{c,e\}}_{H\backslash 1,2,3,4,5,6} - \Phi^{\{b,e\},\{c,d\}}_{H\backslash 1,2,3,4,5,6}.\]  
  Thus, up to signs,
  \[
    D^7_{G_1}(1,2,5,3,4,6,7) = \Psi^{156,234}_H \Phi^{\{a,b,c,d,e\}}_{H \backslash 1,2,3,4,5,6}
  \]
  Arguing similarly for $G_2$, $G_3$, and $G_4$ we get
  \begin{align*}
    D^7_{G_2}(1,2,3,4,5,6,7) & = \pm \Psi^{146,236}_{G_2,57}\Psi^{1257,3457}_{G_2,6}  = \pm \Psi^{134,256}_{H}\Phi^{\{a,b,c,d,e\}}_{H \backslash 1,2,3,4,5,6}\\
    D^7_{G_3}(1,2,3,4,5,6,7) & = \pm \Psi^{146,236}_{G_3,57}\Psi^{1257,3457}_{G_3,6}  = \pm \Psi^{124,356}_{H}\Phi^{\{a,b,c,d,e\}}_{H \backslash 1,2,3,4,5,6} \\
    D^7_{G_4}(1,2,3,4,5,6,7) & = \pm  \Psi^{126,346}_{G_4,57}\Psi^{1457,2357}_{G_4,6}  = \pm \Psi^{123,456}_{H}\Phi^{\{a,b,c,d,e\}}_{H \backslash 1,2,3,4,5,6}
  \end{align*}
  All together
  \begin{align*}
    & \pm  D^7_{G_1} \pm D^7_{G_2} \pm  D^7_{G_3} \pm D^7_{G_4} \\
    & = (\pm \Psi^{234,156}_H\pm \Psi^{134,256}_{H}\pm \Psi^{124,356}_{H}\pm \Psi^{123,456}_{H})\Phi^{\{a,b,c,d,e\}}_{H \backslash 1,2,3,4,5,6} = 0
  \end{align*}
which vanishes for appropriate sign choices   by the Pl\"ucker identity with $n=3$ (\S\ref{subsectDodg}).
\end{proof}

Although there is no well-defined way to fix the signs in the denominator reduction for a single graph in general, we can determine signs for the $D^7$'s of the above graphs.
For this, viewing the arguments to the 5 invariant as ordered, we can choose the sign given by the positive sign in the expression in definition \ref{Def5inv}, following that, fix the signs of later reductions where either the constant or quadratic term vanishes by choosing the sign of the constant term in the previous step.  With these conventions and the order and orientations given in the illustrations, the above proof gives that 
\begin{equation} 
  D^7_{G_1}  - D^7_{G_2}  + D^7_{G_3}  - D^7_{G_4} = 0
\end{equation}

This identity strongly suggests a connection to the 4-term relation for chord diagrams in knot theory \cite{BNvass}.  The other key identity in chord diagrams is the one-term relation.  For denominators, the one-term relation is the fact that graphs of the form
\[
\includegraphics{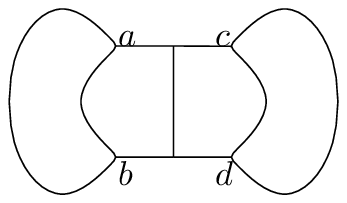}
\]
are zero after integrating the five indicated edges.  To see this, write the 5-invariant of the five edges joining $a$, $b$, $c$, and $d$ in spanning forest polynomials,
\[
 \pm \left(\Phi^{\{a,c\},\{b,d\}} - \Phi^{\{a,d\},\{b,c\}}\right)\Phi^{\{a,b,c,d\}}
\]
This is zero since in both terms of the first factor there are parts which appear in both components of the graph and no edges remaining to join them.

Note that, unfortunately the four-term relation does not hold at the level of the $c_2$ invariant.  As an example take $P_{7,11}$ from \cite{CENSUS}.  From the illustration in that paper, label the vertices counterclockwise from 3 o'clock starting with label $0$.  Next make a double triangle expansion of vertex $1$ in triangle $012$ so that the new vertex is adjacent to vertex 6. Remove the new vertex.  This graph has the same $c_2$ invariant as $P_{7,11}$.  However if we use the seven edges $03$, $08$, $01$, $12$, $25$, $14$, and $27$ with vertex $7$ playing the role of $e$, then the four $c_2$ invariants do not cancel.

\bibliographystyle{plain}
\renewcommand\refname{References}

\end{document}